\newif\ifdebug
\newif \iffig
\newif \iftable
\title{Asymptotic shifting numbers in triangulated categories}
\thanks{Revised \textsc{\today} }
\date{August 2020}
\author{
	Yu-Wei Fan
}
\address{
	\parbox{0.7\textwidth}{
		Department of Mathematics\\
		University of California, Berkeley\\
		Berkeley, CA 94720, USA}
}
\email{{ywfan@berkeley.edu}}
\author{
	Simion Filip
}
\address{
	\parbox{0.7\textwidth}{
		Department of Mathematics\\
		University of Chicago\\
		5734 S University Ave\\
		Chicago, IL 60637}
}
\email{{sfilip@math.uchicago.edu}}
\begin{document}

\begin{abstract}
	We study invariants, called shifting numbers, that measure the asymptotic amount by which an autoequivalence of a triangulated category translates inside the category.
    The invariants are analogous to \Poincare translation numbers that are widely used in dynamical systems.
    We additionally establish that in some examples the shifting numbers provide a quasimorphism on the group of autoequivalences.
    Additionally, the shifting numbers are related to the entropy function introduced by Dimitrov, Haiden, Katzarkov, and Kontsevich, as well as the phase functions of Bridgeland stability conditions.
\end{abstract}

%
\maketitle
%
\tableofcontents


\section{Introduction}
	\label{sec:introduction}

This paper is concerned with categorical dynamical systems, namely endofunctors $F\colon \cD\to \cD$ on a triangulated category $\cD$.
We study invariants, the shifting numbers, that measure the asymptotic amount by which $F$ translates inside the triangulated category.


\subsection*{Translation and shifting numbers}
    \label{ssec:translation_and_shifting_numbers}

The concept is analogous to the \Poincare translation number, and our starting point is the central extension
\[
    0\to \bZ\to \Aut(\cD)\to \Aut(\cD)/[1]\to 1
\]
where $[1]$ denotes the shift functor of the ($\bZ$-graded) triangulated category $\cD$.

\paragraph{\Poincare translation and rotation numbers}
Let us recall some background on translation numbers that were introduced by Poincar\'e \cite{Poincare1885_Sur-les-courbes-definies-par-les-equations-differentielles-III} and have been used extensively in dynamics since then.
We refer to \cite{Ghys2001_Groups-acting-on-the-circle} for a general introduction to the theory.

Take $\bR/\bZ$ as the model of the circle and $\bR$ as its universal cover to obtain a central extension of groups
\[
0\ra\bZ\ra\Homeo_\bZ^+(\bR)\ra\Homeo^+(\bR/\bZ)\ra1
\]
where $\Homeo^+(\bR/\bZ)$ denotes the orientation-preserving homeomorphisms of the circle and $\Homeo_{\bZ}^+(\bR)$ denotes the orientation-preserving homeomorphisms of the real line commuting with translation by $\bZ$.
The \emph{Poincar\'e translation number} of an element $f\in\Homeo_\bZ^+(\bR)$ is defined by
\[
\wtilde\rho(f)\coloneqq\lim_{n\ra\infty}\frac{f^n(x_0)-x_0}{n}
\]
for some choice of basepoint $x_0\in \bR$ (it is a standard result that the limit always exists and is independent of the choice of $x_0$).
Here are some standard properties of the translation number:
\begin{align*}
    \wtilde\rho\left(f\circ T_k\right) & = \wtilde\rho\left(f\right)+k && \text{for the translation }T_k(x)=x+k\\
    \wtilde\rho\left(gfg^{-1}\right)& =\wtilde\rho\left(f\right) && \textbf{conjugacy invariance}\\
    \wtilde\rho\left(f^n\right) & =n\cdot \wtilde\rho\left(f\right) && \textbf{homogeneity}
\end{align*}
The \emph{rotation number} is defined for elements $f\in \Homeo^+(\bR/\bZ)$ by $\rho(f)\coloneqq\wtilde{\rho}(\wtilde{f}) \mod \bZ \in \bR/\bZ$ for any lift $\wtilde{f}$ of $f$.

\paragraph{Shifting numbers of autoequivalences}
In order to extend the above notions to the categorical setting, we need to first introduce some further concepts.
Instead of the basepoint $x_0\in \bR$ we will use a split generator $G\in \cD$, see \autoref{def:complexity_function}.
A number of categorical ``distance functions'' are available, see \autoref{ssec:the_definition}.
For definiteness, we will use the upper and lower $\Ext$-distance functions denoted by $\epsilon^+$ and $\epsilon^-$, see \autoref{def:extdistancefunction}, introduced in the study of Serre dimensions of triangulated categories in \cite{ElaginLunts,KikutaOuchiTakahashi}.
These are defined by
    \[
    \ep^+(E_1,E_2)\coloneqq\max\{k\in\bZ\colon\Hom(E_1,E_2[-k])\neq0\}
    \]
and
    \[
    \ep^-(E_1,E_2)\coloneqq\min\{k\in\bZ\colon\Hom(E_1,E_2[-k])\neq0\}.
    \]

\begin{theoremintro}[Shifting numbers and their properties]
    \label{thm:shifting_numbers_and_their_properties}
    Let $F\colon\cD\ra\cD$ be an endofunctor of a triangulated category $\cD$ and let $G$ be a split generator of $\cD$.
    The following limits exist and are finite real numbers:
    \[
    \tau^+(F)\coloneqq\lim_{n\ra\infty}\frac{\ep^+(G,F^nG)}{n}\text{ \ and \ }
    \tau^-(F)\coloneqq\lim_{n\ra\infty}\frac{\ep^-(G,F^nG)}{n},
    \]
    and furthermore are independent of the choice of split generator $G$.
    Each of $\tau^+$ and $\tau^-$ also satisfies the following properties:
    \begin{enumerate}
        \item For any $k\in\bZ$ we have
        \[
            \tau^\pm(F\circ[k])=\tau^\pm(F)+k.
        \]
        \item For any two endofunctors $F_1,F_2$ we have
        \[
            \tau^\pm(F_1F_2)=\tau^\pm(F_2F_1).
        \]
        In particular, if $F_2$ is an autoequivalence of $\cD$ then $\tau^\pm(F_2F_1 F_2^{-1})=\tau^\pm(F_1)$.
        \item For any $n\in \bN$ we have
        \[
            \tau^\pm(F^n)=n\cdot \tau^\pm(F)
        \]
        If $F$ is an autoequivalence and $\cD$ admits a Serre functor, then
        \[
            \tau^\pm(F^{-1})=-\tau^\mp(F).
        \]
        \item Setting $\tau(F)\coloneqq\frac{1}{2}\left(\tau^+(F)+\tau^-(F)\right)$.
        If $F$ is an autoequivalence and $\cD$ admits a Serre functor, then
        \[
            \tau(F^n)=n\cdot \tau(F) \text{ for any }n\in\bZ.
        \]
    \end{enumerate}
\end{theoremintro}

\noindent We will call $\tau^+(F)$ resp.~$\tau^-(F)$ the \emph{upper} resp.~\emph{lower} \emph{shifting numbers} of $F$, and $\tau(F)=\frac{1}{2}\left(\tau^+(F)+\tau^-(F)\right)$ the \emph{shifting number} of $F$.
For the proofs, see \autoref{sec:construction_and_properties_of_shifting_numbers}.

\paragraph{Bridgeland stability conditions}
An alternative definition of shifting numbers is possible using the notion of stability conditions, introduced by Bridgeland \cite{Bridgeland2007_Stability-conditions-on-triangulated-categories}.
It is also closer in spirit to the classical definition of the \Poincare translation number, but requires the existence of Bridgeland stability conditions.

\begin{theoremintro}[Shifting numbers from phases of stability conditions]
    \label{thm:shifting_numbers_from_the_phase_of_stability_conditions}
    Assume that $\cD$ admits a Serre functor and a Bridgeland stability condition $\sigma$.
    Let $\phi_\sigma^\pm\colon\Ob(\cD)\ra\bR$ be the phase functions with respect to $\sigma$ (see \autoref{def:stability}).

    Then the following limits exist, are independent of the choice of split generator $G$, and coincide with the upper/lower shifting numbers:
    \[
    \lim_{n\ra\infty}\frac{\phi^\pm_\sigma(F^nG)-\phi^\pm_\sigma(G)}{n}=\tau^\pm(F).
    \]
\end{theoremintro}

\noindent For the proof, see \autoref{theorem:computeviastability}.

\paragraph{Categorical entropy}
One can also connect the notion of shifting numbers with the categorical entropy function $h_t$ introduced by Dimitrov, Haiden, Katzarkov, and Kontsevich \cite{DimitrovHaidenKatzarkov_Dynamical-systems-and-categories}.

\begin{theoremintro}
\label{thm:shifting_entropy_intro}
Let $F\colon\cD\ra\cD$ be an endofunctor of a triangulated category $\cD$ with a split generator $G$, and let $h_t(F)$ be the categorical entropy function of $F$, see \autoref{def:catentropy}.

Then $h_t(F)$ is a real-valued convex function that satisfies:
\begin{align*}
    t\cdot \tau^+(F) & \leq h_t(F) \leq h_0(F) + t \cdot \tau^+(F) && \text{for }t\geq 0,\\
    t\cdot \tau^-(F) & \leq h_t(F) \leq h_0(F) + t \cdot \tau^-(F) && \text{for }t\leq 0.
\end{align*}
        In particular, we have $\lim_{t\ra\pm\infty}\frac{h_t(F)}{t}=\tau^\pm(F)\in\bR$.
\end{theoremintro}

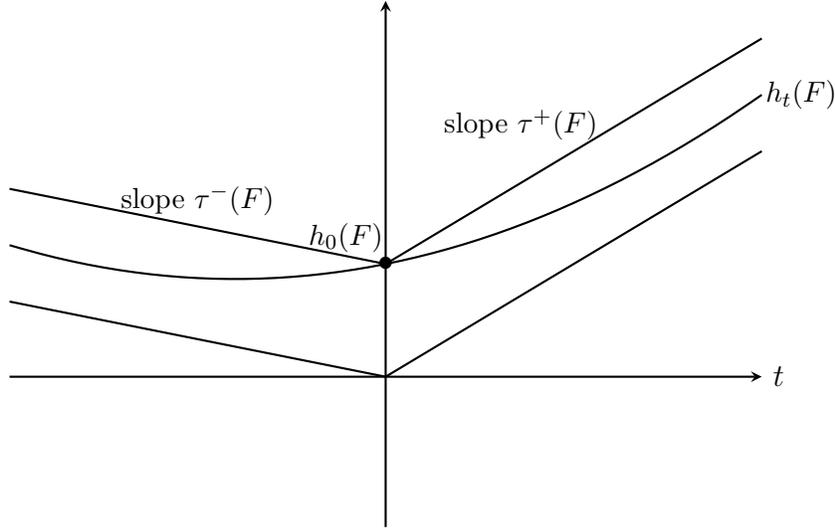
\begin{figure}[htbp]\centering
\begin{tikzpicture}[yscale=1,xscale=1,>=stealth]
\draw[thick,->] (-5,0) -- (5,0) node [right] {$t$};
\draw[thick,->] (0,-2) -- (0,5);
\draw[-,thick] (0,0) -- (5,3);
\draw[-,thick] (0,0) -- (-5,1);
\draw[-,thick] (0,1.5) -- (5,4.5) node [midway,above,black,xshift=-20] {\small slope $\tau^+(F)$};
\draw[-,thick] (0,1.5) -- (-5,2.5) node [midway,above,black] {\small slope $\tau^-(F)$};
\node[xshift=-15,yshift=10] at (0,1.5) {\small $h_0(F)$};
\node[] at (0,1.5) {$\bullet$};
\draw[domain=-5:5, smooth, variable=\x, thick] plot ({\x}, {1.5+0.2*\x+0.05*\x*\x});
\node[xshift=15] at (5,3.75) {\small $h_t(F)$};
\end{tikzpicture}
\caption{Bounds of the categorical entropy function $h_t(F)$}
\label{label}
\end{figure}
\noindent For the proofs, see \autoref{thm:convexity_and_finiteness} and \autoref{theorem:shiftingnumberviaentropy}.
We note that parts of the statements in Theorem~\ref{thm:shifting_numbers_and_their_properties} and \ref{thm:shifting_entropy_intro} also appeared in the work of Elagin and Lunts \cite[\S6]{ElaginLunts}, under the name of ``$F$-dimensions''.
We were led to these results independently of Elagin and Lunts and were made aware of their work by Genki Ouchi only after a first version of this text was posted on the arXiv.
The mass growth function $h_{\sigma,t}(F)$ also satisfies similar inequalities, see \autoref{theorem:computeviastability}.
In \autoref{ssec:LegendreTransform} we develop the observation that the shifting numbers determine the domain of definition of the Legendre-transformed entropy function.

\paragraph{Quasimorphisms}
An important property of the \Poincare translation number is that it gives a nontrivial \emph{quasimorphism} on $\Homeo_\bZ^+(\bR)$, namely there exists a constant $C>0$ such that
    \[
    |\wtilde\rho(fg)-\wtilde\rho(f)-\wtilde\rho(g)|\leq C
    \]
for any $f,g\in\Homeo_\bZ^+(\bR)$.
It is therefore natural to pose:
\begin{questionintro}
\label{ques:quasimorphism}
In what situations does the shifting number
    \[
    \tau\colon\Aut(\cD)\ra\bR
    \]
define a quasimorphism on the group of autoequivalences of a triangulated category $\cD$?
More generally, is there a quasimorphism $\phi$ on $\Aut(\cD)$ whose values satisfy $\phi(F)\in [\tau^-(F),\tau^+(F)]$?
\end{questionintro}

\noindent In the examples that we computed and are described below, occasionally we find that $\tau^+(F)=\tau^-(F)$ but this is special.
For instance, the spherical twist $T_S$ with respect to an $N$-spherical object $S$ has shifting numbers $\tau^+(T_S)=0>1-N=\tau^-(T_S)$ if $N\geq2$ (see \autoref{subsec:sphericaltwistsPtwists}).
In the case of the $A_2$ quiver and its $N$-Calabi--Yau category the average $\tau(F)$ of the upper and lower shifting numbers does give a quasimorphism.



\subsection*{Examples}
    \label{ssec:examples}

We compute several examples in this text, where we establish in particular that \autoref{ques:quasimorphism} has an affirmative answer.
See \autoref{theorem:standardautoequivalence}, \autoref{theorem:ellcurvequasihomom}, and \autoref{theorem:abelsurface} for the proof of the next result.

\begin{theoremintro}
Let $\cD=\cD^b\Coh(X)$ be the bounded derived category of coherent sheaves on a smooth projective variety $X$, where $X$
    \begin{itemize}
        \item is an elliptic curve, or
        \item is an abelian surface, or
        \item has ample or anti-ample canonical bundle $K_X$.
    \end{itemize}

Then $\tau(F)=\tau^\pm(F)$ for any $F\in\Aut(\cD)$, and
    \[
    \tau\colon\Aut(\cD)\ra\bR
    \]
is a quasimorphism. Moreover,
    \begin{itemize}
        \item when $X$ is an elliptic curve, $\tau$ can be decomposed into
                \[
                \Aut(\cD)\xrightarrow{s}\glstab\xrightarrow{t}\Homeo_\bZ^+(\bR)\xrightarrow{\wtilde{\rho}}\bR,
                \]
        where $s,t$ are homomorphisms, and $\wtilde\rho$ is the quasimorphism given by the Poincar\'e rotation number;
        \item when $X$ has ample or anti-ample canonical bundle, we have that $\Aut(\cD)=(\Aut(X)\ltimes\Pic(X))\times\bZ[1]$, and $\tau$ can be decomposed into
            \[
            \Aut(\cD)\xrightarrow{\pi}\bZ\xrightarrow{\iota}\bR,
            \]
        where $\pi$ is the projection to the $\bZ[1]$-factor, and $\iota$ is the natural inclusion of integers into real numbers.
    \end{itemize}
\end{theoremintro}

\noindent In general, computing the shifting number of an autoequivalence can be challenging, especially in situations arising from the composition of \emph{spherical twists} that do not commute.
We prove that \autoref{ques:quasimorphism} also has an affirmative answer for a group generated by two spherical twists with the simplest possible coupling, see \autoref{theorem:CYNA2quiver}.

\begin{theoremintro}
Let $N\geq3$ be an integer and let $\cD_N$ be the $N$-Calabi--Yau category associated to the $A_2$ quiver.
Consider the subgroup $\Aut_*(\cD_N)\subseteq\Aut(\cD_N)$ generated by the spherical twists $T_1,T_2$, and the shift $[1]$ (see \autoref{sec:CYA2}). Then
    \[
    \tau\colon\Aut_*(\cD_N)\ra\bR
    \]
is a quasimorphism.
More precisely, we have
    \[
    \tau=w-\frac16\big(\phi\circ\alpha\big)
    \]
where $w\colon\Aut_*(\cD_N)\ra\bQ$ and $\alpha\colon\Aut_*(\cD_N)\ra\PSL(2,\bZ)$ are group homomorphisms, and $\phi$ is the (homogenization of the) Rademacher function on $\PSL(2,\bZ)$.
\end{theoremintro}

\noindent Some recollections on the Rademacher function are provided in \autoref{app:quasihomomorphismPSL2}, see also \cite{BargeGhys1992_Cocycles-dEuler-et-de-Maslov}.

\paragraph{Quasimorphisms on central extensions of Lie groups}
It is a classical fact that Lie groups of hermitian type admit central $\bZ$-extensions (because the maximal compact has a nontrivial map to the circle).
The notion of \Poincare translation number generalizes to this setting, see for instance \cite{BurgerIozziWienhard2010_Surface-group-representations-with-maximal-Toledo-invariant}.
In \autoref{sec:quasimorphisms_on_lie_groups} we connect this construction with quasimorphisms on autoequivalences of abelian and K3 surfaces.
Recall that the Mukai lattice $\cN$ of such a surface has signature $(2,\rho)$ and the Lie group $\SO(\cN_{\bR})$ is thus of Hermitian type, and its universal cover has a quasimorphism.
\autoref{thm:quasimorphisms_for_abelian_surfaces} shows that in the case of abelian surfaces, the induced quasimorphism on $\Aut\cD$ coincides with the shifting number.



\subsection*{Further remarks}
    \label{ssec:further_remarks}

\paragraph{Categorical dynamics}
The study of \emph{categorical dynamical systems} $(\cD,F)$, i.e.~pairs consist of a triangulated category $\cD$ and an endofunctor $F\colon\cD\ra\cD$, was initiated in a paper of Dimitrov, Haiden, Katzarkov, and Kontsevich \cite{DimitrovHaidenKatzarkov_Dynamical-systems-and-categories}.
It has been an active area of research since then, see for instance \cite{DimitrovHaidenKatzarkov_Dynamical-systems-and-categories,
Fan2018_Entropy-of-an-autoequivalence-on-Calabi-Yau-manifolds,
2018arXiv180110485F,
Ikeda_Mass-growth-of-objects-and-categorical-entropy,
FFHKL,
FFO,Kikuta,KikutaOuchiTakahashi,KST20,KiTa,Ouchi2020_On-entropy-of-spherical-twists}.

The notion of categorical entropy $h_0(F)$, introduced in \cite{DimitrovHaidenKatzarkov_Dynamical-systems-and-categories}, captures the mass growth of objects under large iterates of $F$.
The shifting numbers $\tau^\pm(F)$ complement the entropy by measuring the phase growth of objects under large iterates of $F$.

\paragraph{Mirror symmetry and symplectic geometry}
All examples analyzed in this text are essentially algebro-geometric.
On the other hand, a wealth of quasimorphisms are available in symplectic geometry, see for instance Ruelle \cite{Ruelle1985_Rotation-numbers-for-diffeomorphisms-and-flows} for the origin of many constructions, and Entov--Polterovich \cite{EntovPolterovich2003_Calabi-quasimorphism-and-quantum-homology} for more recent constructions based on Floer theory.
It would be interesting to investigate what kind of quasimorphisms one obtains by the methods of the present text in the case of Fukaya categories.
It would be additionally interesting to understand if the construction of Bestvina--Fujiwara \cite{BestvinaFujiwara2002_Bounded-cohomology-of-subgroups-of-mapping-class-groups} of quasimorphisms can be extended to the symplectic case, or to the case of K3 surfaces using Bridgeland's conjecture \cite{Bridgeland2008_Stability-conditions-on-K3-surfaces}.
The reader can find many further stimulating questions in Smith's survey \cite{Smith2018_Stability-conditions-in-symplectic-topology}.

\paragraph{Some applications of quasimorphisms}
Let us note that the existence of nontrivial quasimorphisms on a group has purely algebraic consequences for the structure of the group, see for instance
\cite{Kotschick2004_Quasi-homomorphisms-and-stable-lengths-in-mapping-class-groups}.
For instance, when a group is perfect it carries a stable commutator length function, which is nontrivial if and only if there exists a nontrivial quasimorphism by Bavard's theorem \cite{Bavard}.
For more on stable commutator length, see Calegari's monograph \cite{Calegari2009_scl}.
It is therefore interesting to ask also the following:

\begin{questionintro}[Perfectness of autoequivalence groups]
    \label{eg:perfectness_of_autoequivalence_groups}
    For which triangulated categories $\cD$ is the group $\Aut(\cD)/[1]$ a perfect group?
    Recall that a perfect group is one where every element is a finite product of commutators.
\end{questionintro}

\noindent Let us also note that it would be interesting to understand the relation between the constructions in this paper and quasimorphisms on the universal cover of the group of contact diffeomorphisms of a contact manifold constructed by Givental \cite{Givental1990_The-nonlinear-Maslov-index} and Eliashberg--Polterovich \cite{EliashbergPolterovich2000_Partially-ordered-groups-and-geometry-of-contact-transformations}.
We are grateful to Leonid Polterovich for bringing these results to our attention.

\paragraph{Analogies}
We end with a comparison between the motivating concepts in dynamical systems and their categorical counterparts.

\newcommand{\mc}[3]{\multicolumn{#1}{#2}{#3}}

\renewcommand{\arraystretch}{1.1}

\vskip 1em

    \begin{tabular}{p{0.4\textwidth} p{0.5\textwidth}}
        \toprule
        \multicolumn{1}{c}{\textbf{Translation numbers} } &  \multicolumn{1}{c}{ \textbf{Shifting numbers} }\\
        \midrule
        $f\in\Homeo_\bZ^+(\bR)$ & $F\colon\cD\ra\cD$ \\
        \midrule
        basepoint $x_0\in\bR$ & split generator $G\in\cD$\\
        \midrule
        amount of translation & phase of stability condition\\
        \midrule
        $f^n(x_0)-x_0$ & $\phi_\sigma^\pm(F^nG)-\phi_\sigma^\pm(G)$ \\
        \midrule
        translation number $\wtilde{\rho}$ & upper/lower shifting numbers $\tau^\pm$\\
        \bottomrule
     \end{tabular}
\vskip 1em

\paragraph{Conventions}
Let $\tbk$ be a base field.
Throughout this article, all triangulated categories are assumed to be $\tbk$-linear, $\bZ$-graded, saturated, and of finite type (i.e.~the $\tbk$-vector space $\oplus_i\Hom_\cD(E,F[i])$ is finite-dimensional for any pair of objects $E,F$ in the category).
Functors between triangulated categories are assumed to be $\tbk$-linear, triangulated, and not virtually zero (i.e.~any power is not the zero functor).

\paragraph{Acknowledgments}
We are grateful to Leonid Polterovich and Ivan Smith for insightful comments on a preliminary version of the manuscript.
We are grateful to Genki Ouchi for pointing out the reference \cite{ElaginLunts} after the first version of the present article was posted on the arXiv.

YF would like to thank Emanuele~Macr\`i for helpful discussions during the early stages of the project.
This research was partially conducted during the period SF served as a Clay Research Fellow.
SF gratefully acknowledges support from the Institute for Advanced Study.
This material is based upon work supported by the National Science Foundation under Grant No. DMS-2005470.
This material is based upon work supported by the National Science Foundation under Grant No. DMS-1638352, DMS-1107452, 1107263, 1107367 ``RNMS: Geometric Structures and Representation Varieties'' (the GEAR Network).
This material is based upon work supported by the National Science Foundation under Grant No. DMS-1440140 while SF was in residence at the Mathematical Sciences Research Institute in Berkeley, California, during the Fall 2019 semester.




\section{Construction and properties of shifting numbers}
	\label{sec:construction_and_properties_of_shifting_numbers}

\paragraph{Outline of section}
Inspired by the classical construction of the translation and rotation numbers, we study the notion of shifting number of an endofunctor of a triangulated category and some of its basic properties.


\subsection{The definition}
	\label{ssec:the_definition}

The translation number of an element $f\in\Homeo^+_\bZ(\bR)$ measures the average displacement of points in an orbit $\{f^n(x_0)\}$.
To study its categorical analogue, one needs a notion of ``distance'' between pairs of objects in a triangulated category.
We first recall the \emph{complexity function} introduced in \cite{DimitrovHaidenKatzarkov_Dynamical-systems-and-categories}.

\begin{definition}[Complexity function {\cite[Definition~2.1]{DimitrovHaidenKatzarkov_Dynamical-systems-and-categories}}]
    \label{def:complexity_function}
Let $E_1,E_2$ be objects in a triangulated category $\cD$.
The \emph{complexity function} of $E_2$ relative to $E_1$ is the function $\delta_t(E_1,E_2)\colon\bR\to[0,\infty]$ given by
    \[
    \delta_t(E_1,E_2)\coloneqq
    \inf
    \left\{\sum_{k=1}^me^{n_kt}\Big|
                {\substack{
                0=A_0\to A_1\to\cdots\to A_m= E_2\oplus F\text{ for some }F\in\Ob(\cD), \\
                \text{where }\Cone(A_{k-1}\to A_k)\cong E_1[n_k]\text{ for all }k
                }}\right\}
    .
    \]
Define $\delta_t(E_1,E_2)=0$ if $E_2\cong0$, and define $\delta_t(E_1,E_2)=\infty$ if $E_2$ does not lie in the thick triangulated subcategory generated by $E_1$.

An object $G$ is called a \emph{split generator} if for any $E\in \cD$ we have that $\delta_t(G,E)<+\infty$.
\end{definition}
The notion of complexity function was introduced in \cite{DimitrovHaidenKatzarkov_Dynamical-systems-and-categories} in order to define the \emph{categorical entropy function} of an endofunctor.
Another important distance function, called the Ext-distance function in \cite{FFO}, is obtained by computing the dimensions of morphism spaces.

\begin{definition}[Ext-distance function and upper/lower Ext-distance]
\label{def:extdistancefunction}
Let $E_1,E_2$ be objects in a triangulated category $\cD$.
    \begin{itemize}
        \item The \emph{Ext-distance function} from $E_1$ to $E_2$ is the function of the $t$-variable $\ep_t(E_1,E_2)\colon\bR\to\bR_{\geq0}$ given by
        \[
        \ep_t(E_1,E_2)\coloneqq\sum_{k\in\bZ}\dim_\tbk\Hom(E_1,E_2[-k])e^{kt}.
        \]
        \item The \emph{upper} and \emph{lower $\Ext$-distances} from $E_1$ to $E_2$ are defined to be
        \begin{align*}
        \ep^+(E_1,E_2)&\coloneqq\max\{k\in\bZ\colon\Hom(E_1,E_2[-k])\neq0\},
        	\intertext{ and }
        \ep^-(E_1,E_2)&\coloneqq\min\{k\in\bZ\colon\Hom(E_1,E_2[-k])\neq0\}.
        \end{align*}
    \end{itemize}
\end{definition}

\begin{remark}
The upper and lower $\Ext$-distances $\ep^\pm(E_1,E_2)$ were used in the study of Serre dimensions of triangulated categories \cite{ElaginLunts,KikutaOuchiTakahashi}.
They are not defined if $\Hom^\bullet(E_1,E_2)=0$.
We will only use them for $E_1=G$ and $E_2=F^nG$, where $G$ is a split generator of $\cD$ and $F$ is an endofunctor of $\cD$ (which is not virtually zero).
By the proof of \cite[Theorem~2.7]{DimitrovHaidenKatzarkov_Dynamical-systems-and-categories} we have that $\Hom^\bullet(G,F^nG)\neq0$ and so $\ep^\pm(G,F^nG)$ is well-defined.
\end{remark}

\begin{definition}[Categorical entropy function {\cite[Definition~2.5]{DimitrovHaidenKatzarkov_Dynamical-systems-and-categories}}]
\label{def:catentropy}
Let $F\colon\cD\to\cD$ be an endofunctor of a triangulated category $\cD$ with a split generator $G$.
The \emph{categorical entropy function} of $F$ is the function $h_t(F)\colon\bR\to[-\infty,\infty)$ in variable $t$ given by
    \[
    h_t(F)\coloneqq\lim_{n\to\infty}\frac{\log\delta_t(G,F^nG)}{n}.
    \]
\end{definition}
\noindent The following result summarizes \cite[Lemma~2.6, Theorem~2.7]{DimitrovHaidenKatzarkov_Dynamical-systems-and-categories}.

\begin{theorem}
    \label{thm:entropy_via_ext_distance}
Let $F\colon\cD\to\cD$ be an endofunctor of a triangulated category $\cD$ with a split generator $G$.
Then the limit in \autoref{def:catentropy} defining the categorical entropy function $h_t(F)$ is independent of the choice of generator.
Moreover, it can be computed alternatively via the Ext-distance function:
    \[
    h_t(F)=\lim_{n\to\infty}\frac{\log\ep_t(G,F^nG)}{n}.
    \]
\end{theorem}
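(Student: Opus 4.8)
The plan is to follow the arguments of Dimitrov--Haiden--Katzarkov--Kontsevich \cite{DimitrovHaidenKatzarkov_Dynamical-systems-and-categories}, which rest on two elementary properties of the complexity function together with the subadditive lemma of Fekete. \textbf{Properties of $\delta_t$.} First I would record, directly from \autoref{def:complexity_function}, the \emph{submultiplicativity}
\[
\delta_t(E_1,E_3)\le\delta_t(E_1,E_2)\cdot\delta_t(E_2,E_3),
\]
obtained by refining each stage $\Cone(A_{j-1}\to A_j)\cong E_2[m_j]$ of a decomposition of $E_3\oplus(\cdots)$ in terms of shifts of $E_2$ by a decomposition of $E_2[m_j]\oplus(\cdots)$ in terms of shifts of $E_1$; and the \emph{functoriality} $\delta_t(FE_1,FE_2)\le\delta_t(E_1,E_2)$ for any exact functor $F$, obtained by applying $F$ to a decomposition and using $F\circ[1]\cong[1]\circ F$.

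\textbf{Existence and generator-independence.} Put $a_n\coloneqq\log\delta_t(G,F^nG)$; this is a finite real number since $G$ split-generates (so the complexity is finite) and $F$ is not virtually zero (so $F^nG\ne0$ and the complexity is positive). Submultiplicativity and functoriality give $\delta_t(G,F^{n+m}G)\le\delta_t(G,F^nG)\cdot\delta_t(F^nG,F^{n+m}G)\le\delta_t(G,F^nG)\cdot\delta_t(G,F^mG)$, i.e.\ $a_{n+m}\le a_n+a_m$, so by Fekete's lemma $a_n/n\to\inf_n a_n/n\in[-\infty,\infty)$, which is the limit of \autoref{def:catentropy}. For a second split generator $G'$, the same two properties yield
\[
\delta_t(G,F^nG)\le\delta_t(G,G')\cdot\delta_t(G',F^nG')\cdot\delta_t(F^nG',F^nG)\le\delta_t(G,G')\,\delta_t(G',G)\,\delta_t(G',F^nG'),
\]
and after taking $\tfrac1n\log(-)$ and letting $n\to\infty$ the outer (constant) factors drop out, giving one inequality between the two entropies; by symmetry they agree.

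\textbf{The easy half of the $\ep_t$ reformulation.} Applying $\Hom(G,-)$ to a triangle $A\to B\to C$ and reading off dimensions from the long exact sequence gives the subadditivity $\ep_t(G,B)\le\ep_t(G,A)+\ep_t(G,C)$, while a direct computation gives $\ep_t(G,G[n])=e^{nt}\ep_t(G,G)$. Inducting along a decomposition $0=A_0\to\cdots\to A_m=E\oplus F'$ with $\Cone(A_{j-1}\to A_j)\cong G[n_j]$ then yields $\ep_t(G,E)\le\ep_t(G,A_m)\le\ep_t(G,G)\sum_j e^{n_jt}$, and taking the infimum over decompositions gives $\ep_t(G,E)\le\ep_t(G,G)\,\delta_t(G,E)$. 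In particular $\limsup_n\tfrac1n\log\ep_t(G,F^nG)\le h_t(F)$.

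\textbf{The reverse inequality, which is the main obstacle.} What remains is $\liminf_n\tfrac1n\log\ep_t(G,F^nG)\ge h_t(F)$, for which it suffices to bound $\delta_t(G,E)$ by a fixed multiple of $\ep_t(G,E)=\sum_k\dim\Hom(G,E[-k])e^{kt}$ (or, more weakly, by a subexponential-in-$n$ multiple when $E=F^nG$, which is enough to compare the two growth rates). The natural construction is a d\'evissage by evaluation maps: from the triangle $\bigoplus_k\Hom(G,E[-k])\otimes_\tbk G[k]\to E\to E'$ one peels off a layer assembled from $\sum_k\dim\Hom(G,E[-k])$ shifts of $G$ of total weighted cost $\ep_t(G,E)$, and then iterates on $E'$. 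The delicate point is that this process must terminate after a number of rounds bounded independently of $E$ — which is where I expect the real work to lie: since $\cD$ is saturated of finite type, a split generator is automatically a \emph{strong} generator (it has finite generation time, by the results of Bondal--Van den Bergh and Rouquier), and that uniformity is exactly what keeps the comparison constants $C_1(t),C_2(t)$ independent of $E$. Granting the two-sided estimate $C_1(t)\,\delta_t(G,E)\le\ep_t(G,E)\le C_2(t)\,\delta_t(G,E)$, specializing to $E=F^nG$ and letting $n\to\infty$ finishes the proof.
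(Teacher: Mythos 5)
The paper does not reprove this result: it states it as a summary of Lemma~2.6 and Theorem~2.7 of Dimitrov--Haiden--Katzarkov--Kontsevich and cites them. Your proposal is a correct outline of precisely that argument — submultiplicativity of $\delta_t$ plus Fekete for existence and generator-independence, the long-exact-sequence bound $\ep_t(G,E)\le\ep_t(G,G)\,\delta_t(G,E)$ for the easy inequality, and the d\'evissage with a uniform bound on generation time (coming from $G$ being a strong generator because $\cD$ is saturated of finite type) for the reverse inequality — so it takes the same approach as the source the paper relies on.
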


Before introducing the definition of shifting numbers, we first prove that the value $-\infty$ can be excluded for the categorical entropy function.

\begin{theorem}[Convexity and finiteness of categorical entropy function]
    \label{thm:convexity_and_finiteness}
    The categorical entropy function $h_t(F)$ is a real-valued convex function in $t$.
\end{theorem}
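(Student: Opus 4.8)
The statement splits into three parts --- convexity of $t\mapsto h_t(F)$, the bound $h_t(F)<+\infty$, and the bound $h_t(F)>-\infty$ --- of which only the last carries real content. For convexity, fix $n$: by \autoref{def:extdistancefunction} the function $t\mapsto\ep_t(G,F^nG)=\sum_{k\in\bZ}\dim_\tbk\Hom(G,F^nG[-k])\,e^{kt}$ is a finite sum of the exponentials $e^{kt}$ with nonnegative coefficients, at least one of which is strictly positive because $\Hom^\bullet(G,F^nG)\neq 0$ (the remark following \autoref{def:extdistancefunction}); the logarithm of such a function is real-valued and convex on all of $\bR$, this being the standard fact that the log of the Laplace transform of a nonzero nonnegative measure is convex, seen by computing the second derivative as a variance. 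Hence each $\tfrac1n\log\ep_t(G,F^nG)$ is a real-valued convex function of $t$, and by \autoref{thm:entropy_via_ext_distance} the function $h_t(F)$ is their pointwise limit as $n\to\infty$ (the limit existing in $[-\infty,+\infty)$ by \cite{DimitrovHaidenKatzarkov_Dynamical-systems-and-categories}), hence convex. Finiteness from above is already part of \autoref{def:catentropy}; it may be recalled quickly via subadditivity of $n\mapsto\log\delta_t(G,F^nG)$ --- the triangle inequality for the complexity function together with $\delta_t(F^nA,F^nB)\leq\delta_t(A,B)$ --- which gives $h_t(F)=\inf_n\tfrac1n\log\delta_t(G,F^nG)\leq\log\delta_t(G,FG)<+\infty$, the right-hand side being a finite real number since $G$ is a split generator and $FG\neq 0$.

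The substantive point is to exclude the value $-\infty$, and for that I would work at $t=0$. Since $F$ is not virtually zero we have $F^nG\neq 0$ (otherwise $F^n$ would vanish on the thick subcategory generated by $G$, i.e.\ on all of $\cD$), and $F^nG$ lies in that thick subcategory, so $\Hom^\bullet(G,F^nG)\neq 0$; hence $\ep_0(G,F^nG)=\dim_\tbk\Hom^\bullet(G,F^nG)\geq 1$, and therefore $h_0(F)\geq 0$. Now a convex function on $\bR$ that is everywhere $<+\infty$ and is $>-\infty$ at a single point is finite everywhere: if $h_{t_0}(F)=-\infty$ for some $t_0\neq 0$, then convexity along the segment from $t_0$ to $0$ forces $h_{t_1}(F)=-\infty$ for every $t_1$ strictly between them, and then $h_0(F)\leq\tfrac12 h_{t_1}(F)+\tfrac12 h_{-t_1}(F)=-\infty$, a contradiction. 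So $h_t(F)\in\bR$ for every $t$, which is the assertion.

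There is essentially no obstacle here beyond the convexity bookkeeping; the only step where the hypotheses on $\cD$ and $F$ are actually used is the inequality $h_0(F)\geq 0$, through the non-virtual-vanishing of $F$. If one prefers to bypass the abstract convexity argument, the lower bound can be made explicit and linear in $t$: resolving $FG$ by finitely many shifts $G[l]$ with $|l|\leq L$ and iterating shows that $F^nG$ is built from copies of $G[l]$ with $|l|\leq nL$, and since $\cD$ is of finite type the group $\Hom(G,G[j])$ is nonzero only for $j$ in a bounded window, so the long exact sequences yield $|\ep^{\pm}(G,F^nG)|\leq nL+O(1)$ and hence $h_t(F)\geq -L\,|t|+o(1)$. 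This same linear control on $\ep^{\pm}(G,F^nG)$ is what will later guarantee that the shifting numbers $\tau^{\pm}(F)$ are finite.
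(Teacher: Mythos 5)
Your proof is correct and takes essentially the same route as the paper: show each $\tfrac1n\log\ep_t(G,F^nG)$ is convex (your "log of a Laplace transform has nonnegative second derivative because it is a variance" is the same computation the paper phrases via Cauchy--Schwarz), pass to the pointwise limit, observe $h_0(F)\geq 0$ and $h_t(F)<+\infty$ from Fekete/subadditivity, and use convexity to rule out $h_t(F)=-\infty$. The detour through an intermediate $t_1$ is slightly more roundabout than the paper's direct appeal to $h_0\leq\tfrac12 h_{t}+\tfrac12 h_{-t}$, but mathematically identical. The explicit linear lower bound you sketch at the end, resolving $F^nG$ by shifts $G[l]$ with $|l|\leq nL$ to get $|\ep^\pm(G,F^nG)|\leq nL+O(1)$ and hence $h_t(F)\geq -L|t|$, is a genuine alternative the paper does not include; it is more hands-on and incidentally makes the finiteness of $\tau^\pm(F)$ visible at the same stroke, rather than deducing it afterward from the entropy bounds.
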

\begin{proof}
    Consider the functions
    \[
        h_{t,n}(F)\coloneqq \frac{\log \epsilon_t\left(G,F^{n}G\right)}{n} \text{ appearing in \autoref{thm:entropy_via_ext_distance}.}
    \]
    Note that each function $h_{t,n}(F)$ is convex in $t$, as can be verified by differentiating twice any function of the form $\log\left(\sum a_ie^{k_i\cdot t}\right)$ with $a_i\geq 0$ and applying Cauchy--Schwarz.
    Since $h_{t}(F)$ is a pointwise limit of convex functions, it is itself convex.

    Next, we do know that $h_{0}(F)\geq 0$ from \autoref{def:catentropy}, since the defining functions always satisfy that.
    Additionally, we also know that $h_t(F)<+\infty$ for all $t$ since the existence of the limit established in \cite{DimitrovHaidenKatzarkov_Dynamical-systems-and-categories} is via Fekete's lemma, in particular it equals the infimum of the sequence $\frac{1}{n}\log \delta_t(G,F^nG)$, and for $n=1$ this is already finite.
    Together with convexity, this excludes the possibility that $h_t(F)=-\infty$ for some $t\neq 0$, since convexity would force $h_{-t}(F)=+\infty$ which is a contradiction.
\end{proof}


\begin{theorem}[Shifting numbers via entropy, see also \cite{ElaginLunts}]
\label{theorem:shiftingnumberviaentropy}
Let $F\colon\cD\to\cD$ be an endofunctor of a triangulated category $\cD$ with a split generator $G$.
Then the following limits exist (in $\bR$)
    \[
    \tau^+(F)\coloneqq\lim_{n\to\infty}\frac{\ep^+(G,F^nG)}{n}\text{ \ and \ } \tau^-(F)\coloneqq\lim_{n\to\infty}\frac{\ep^-(G,F^nG)}{n},
    \]
and are independent of the choice of split generator $G$.
Moreover, the limits $\lim_{t\to\pm\infty}\frac{h_t(F)}{t}$ also exist, and we have
    \[
    \lim_{t\to\pm\infty}\frac{h_t(F)}{t}=\tau^\pm(F).
    \]
Additionally, the following inequalities for the entropy function hold:
\begin{align*}
    t\cdot \tau^+(F) & \leq h_t(F) \leq h_0(F) + t \cdot \tau^+(F) && \text{for }t\geq 0,\\
    t\cdot \tau^-(F) & \leq h_t(F) \leq h_0(F) + t \cdot \tau^-(F) && \text{for }t\leq 0.
\end{align*}

\end{theorem}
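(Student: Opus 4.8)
The plan is to trap the entropy function $h_t(F)$ between two affine functions of $t$ whose common slope is governed by the quantities $\ep^\pm(G,F^nG)/n$, and then to let $t\to\pm\infty$ to force the sequences defining $\tau^\pm(F)$ to converge. Everything rests on an elementary comparison of $\ep_t(G,E)=\sum_{k\in\bZ}\dim_\tbk\Hom(G,E[-k])e^{kt}$ with $\ep^\pm(G,E)$, valid for any $E$ with $\Hom^\bullet(G,E)\neq0$ (which holds for $E=F^nG$ by the remark following \autoref{def:extdistancefunction}): all coefficients are $\geq0$ and the coefficient of $e^{\ep^+(G,E)t}$, like the one of $e^{\ep^-(G,E)t}$, is at least $1$, so keeping a single term gives
\[
\ep^\pm(G,E)\cdot t\;\leq\;\log\ep_t(G,E)\qquad\text{for all }t\in\bR,
\]
while, for $t\geq0$, the bound $e^{kt}\leq e^{\ep^+(G,E)t}$ on every term gives $\log\ep_t(G,E)\leq\log\ep_0(G,E)+\ep^+(G,E)\cdot t$, and symmetrically $\log\ep_t(G,E)\leq\log\ep_0(G,E)+\ep^-(G,E)\cdot t$ for $t\leq0$.

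First I would put $E=F^nG$ in these inequalities, divide by $n$, and pass to the limit. By \autoref{thm:entropy_via_ext_distance} we have $\tfrac{1}{n}\log\ep_t(G,F^nG)\to h_t(F)$ and $\tfrac{1}{n}\log\ep_0(G,F^nG)\to h_0(F)$, and by \autoref{thm:convexity_and_finiteness} the value $h_0(F)$ is a finite real number. Writing $\overline\tau^\pm=\limsup_n\ep^\pm(G,F^nG)/n$ and $\underline\tau^\pm=\liminf_n\ep^\pm(G,F^nG)/n$, taking $\limsup$ in the lower bound and $\liminf$ in the upper bound produces, for $t>0$,
\[
t\cdot\overline\tau^+\;\leq\; h_t(F)\;\leq\; h_0(F)+t\cdot\underline\tau^+,
\]
and the mirror argument with $\ep^-$ gives $t\cdot\underline\tau^-\leq h_t(F)\leq h_0(F)+t\cdot\overline\tau^-$ for $t<0$ (the sign of $t$ controls whether $\limsup(t\,c_n)$ equals $t\limsup c_n$ or $t\liminf c_n$, and this is the one point of bookkeeping to watch). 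Each chain yields $t\,(\overline\tau^+-\underline\tau^+)\leq h_0(F)$ for all $t>0$, respectively $t\,(\underline\tau^--\overline\tau^-)\leq h_0(F)$ for all $t<0$; since $h_0(F)<\infty$, letting $t\to+\infty$ (respectively $t\to-\infty$) forces $\overline\tau^+=\underline\tau^+$ (respectively $\overline\tau^-=\underline\tau^-$). Thus $\tau^\pm(F)=\lim_n\ep^\pm(G,F^nG)/n$ exist, and evaluating the displayed bounds at $t=\pm1$ shows $\tau^\pm(F)\in\bR$.

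Once existence is known, the displayed bounds read exactly $t\cdot\tau^+(F)\leq h_t(F)\leq h_0(F)+t\cdot\tau^+(F)$ for $t\geq0$ and the mirror inequality for $t\leq0$ (the case $t=0$ being $0\leq h_0(F)\leq h_0(F)$), which is the last assertion of the theorem. Dividing these by $t$ and letting $t\to\pm\infty$, the term $h_0(F)/t$ tends to $0$, so the squeeze theorem gives $\lim_{t\to\pm\infty}h_t(F)/t=\tau^\pm(F)$. Independence of the split generator then comes for free: $h_t(F)$ does not depend on $G$ by \autoref{thm:entropy_via_ext_distance}, and $\tau^\pm(F)$ has just been identified with $\lim_{t\to\pm\infty}h_t(F)/t$.

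The only genuinely delicate aspect is the interplay flagged above: the $\limsup$/$\liminf$ bookkeeping caused by the sign of $t$, and---more essentially---the observation that it is the finiteness of $h_0(F)$ (hence the use of \autoref{thm:convexity_and_finiteness}) that makes the limit $t\to\pm\infty$ collapse $\limsup$ onto $\liminf$ and thereby produce the existence of $\tau^\pm(F)$. The rest is the elementary estimate on $\ep_t$ above and routine manipulation of limits.
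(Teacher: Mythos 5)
Your proof is correct and follows essentially the same route as the paper's: bound $\log\ep_t(G,F^nG)$ above and below by affine expressions in $\ep^\pm(G,F^nG)$, pass to the $n\to\infty$ limit using \autoref{thm:entropy_via_ext_distance}, and exploit finiteness of $h_0(F)$ from \autoref{thm:convexity_and_finiteness} together with $t\to\pm\infty$ to force $\limsup$ and $\liminf$ of $\ep^\pm(G,F^nG)/n$ to coincide. The only differences are cosmetic (you isolate the inequality $t(\overline\tau^+-\underline\tau^+)\leq h_0(F)$ explicitly before letting $t\to\infty$, while the paper divides the whole chain by $t$ first); both handle the $\limsup/\liminf$ sign bookkeeping and derive generator-independence from the $h_t(F)/t$ identification in the same way.
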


\begin{definition}[Upper and lower shifting numbers]
	\label{def:upper_and_lower_shifting_numbers}
	For an endofunctor $F\colon \cD\to \cD$, the quantities $\tau^+(F)$ and $\tau^-(F)$ are called the \emph{upper}, resp. \emph{lower}, \emph{shifting numbers} of $F$.
\end{definition}

\begin{proof}[Proof of \autoref{theorem:shiftingnumberviaentropy}]
The proof is essentially given in \cite[Proposition~6.13]{ElaginLunts}, except for the precise upper bound on $h_t(F)$.
We are grateful to Genki Ouchi, who indicated to us the paper of Elagin and Lunts after a first version of this text appeared on the arXiv.

Suppose first that $t>0$.
Then we have
    \[
    e^{\ep^+(G,F^nG)t}\leq
    \ep_t(G,F^nG)\leq
    \Big(\sum_k\dim_\tbk\Hom(G,F^nG[-k])\Big)\cdot e^{\ep^+(G,F^nG)t}.
    \]
By applying $\log(-)/n$ and taking $n\to\infty$, one obtains
    \begin{align*}
        t\cdot\limsup_{n\to\infty}\frac{\ep^+(G,F^nG)}{n} & \leq
        \limsup_{n\to\infty}\frac{\log\ep_t(G,F^nG)}{n} \\
        & =\lim_{n\to\infty}\frac{\log\ep_t(G,F^nG)}{n} =h_t(F) \\
        & =\liminf_{n\to\infty}\frac{\log\ep_t(G,F^nG)}{n} \\
        & \leq\liminf_{n\to\infty}\frac{\log\Big(\sum_k\dim_\tbk\Hom(G,F^nG[-k])\Big)+\ep^+(G,F^nG)t}{n} \\
        & =h_0(F)+t\cdot\liminf_{n\to\infty}\frac{\ep^+(G,F^nG)}{n}.
    \end{align*}
Hence
\begin{align}
    \label{eqn:tau_t_prelimit}
    t\cdot \limsup_{n\to\infty}\frac{\ep^+(G,F^nG)}{n}
    \leq {h_t(F)}
    \leq
    {h_0(F)}+t\cdot \liminf_{n\to\infty}\frac{\ep^+(G,F^nG)}{n}.
\end{align}
Note that $h_0(F)$ is a non-negative real number (we always assume that the functors are not virtually zero).
Dividing by $t$ and sending it to $+\infty$, it follows that the two limits exist and coincide:
    \[
    \lim_{t\to+\infty}\frac{h_t(F)}{t}=\lim_{n\to\infty}\frac{\ep^+(G,F^nG)}{n}
    \]
Knowing that the limits exist and equal to $\tau^+(F)$ and returning to \autoref{eqn:tau_t_prelimit}, the inequality
\[
    t\cdot \tau^+(F) \leq h_t(F) \leq h_0 + t\cdot \tau^+(F) \quad\text{ follows for }t\geq 0.
\]
Similarly to the above reasoning, for $t<0$ we have
    \[
    e^{\ep^-(G,F^nG)t}\leq
    \ep_t(G,F^nG)\leq
    \Big(\sum_k\dim_\tbk\Hom(G,F^nG[k])\Big)\cdot e^{\ep^-(G,F^nG)t}.
    \]
Hence
    \[
     t\cdot\liminf_{n\to\infty}\frac{\ep^-(G,F^nG)}{n}\leq h_t(F)\leq h_0(F)+t\cdot\limsup_{n\to\infty}\frac{\ep^-(G,F^nG)}{n},
    \]
and therefore
    \[
    \liminf_{n\to\infty}\frac{\ep^-(G,F^nG)}{n}\geq\frac{h_t(F)}{t}\geq\frac{h_0(F)}{t}+\limsup_{n\to\infty}\frac{\ep^-(G,F^nG)}{n}.
    \]
By letting $t\to-\infty$, we prove the following two limits both exist and coincide:
    \[
    \lim_{t\to-\infty}\frac{h_t(F)}{t}=\lim_{n\to\infty}\frac{\ep^-(G,F^nG)}{n}
    \]
and the claimed inequality for $t\leq 0$ and $h_t(F)$ follows as before.

Finally, we note that $\tau^\pm(F)\in\bR$ follows from the above inequalities and the finiteness of $h_t(F)$ (\autoref{thm:convexity_and_finiteness}).
\end{proof}

\begin{remark}\leavevmode
	\label{remark:rigidity_and_disagreement_of_shifting_numbers}
	\begin{enumerate}
		\item If $h_0(F)=0$ then from \autoref{theorem:shiftingnumberviaentropy} it follows that there exist constants $\tau^{\pm}(F)\in \bR$ such that $h_t(F)=\tau^{\pm}(F)\cdot t$ for $t\geq 0$, resp. $t\leq 0$.
		\item In general, the upper and lower shifting numbers of an endofunctor do not have to coincide, cf. examples of spherical twists in \autoref{subsec:sphericaltwistsPtwists}.
	\end{enumerate}
\end{remark}

In \autoref{sec:examples_of_shifting_numbers}, we will see that in some situations the upper and lower shifting numbers agree, and give a quasimorphism, and sometimes they don't agree.
When they don't agree, such as the example of the $A_2$ quiver in \autoref{sec:CYA2}, it is useful to introduce the average of the two quantities:

\begin{definition}
\label{def:shiftingnumber}
Let $F\colon\cD\to\cD$ be an endofunctor of a triangulated category $\cD$ with a split generator $G$.
Define the \emph{shifting number of $F$} to be the average
    \[
    \tau(F)\coloneqq\frac{\tau^+(F)+\tau^-(F)}{2}.
    \]
\end{definition}

\subsection{Shifting numbers via stability conditions}
	\label{ssec:shifting_numbers_via_stability_conditions}

\subsubsection{Setup}
	\label{sssec:setup_shifting_numbers_via_stability_conditions}

In this section, we show that if a triangulated category $\cD$ admits a Bridgeland stability condition $\sigma$, then the upper/lower shifting numbers of $F$ coincide with the average displacements of $\{\phi_\sigma^\pm(F^nG)\}\subseteq\bR$, where $\phi_\sigma^\pm\colon\Ob(\cD)\ra\bR$ are the phase functions with respect to $\sigma$.
We first recall the notion of \emph{Bridgeland stability conditions} on triangulated categories.

\begin{definition}[Bridgeland \cite{Bridgeland2007_Stability-conditions-on-triangulated-categories}]
\label{def:stability}
Let $\cD$ be a triangulated category and let $\cl\colon K_0(\cD)\ra\Gamma$ be a group homomorphism from the Grothendieck group of $\cD$ to a finite rank free abelian group $\Gamma$.
A \emph{Bridgeland stability condition} $\sigma=(Z_\sigma,P_\sigma)$ on $\cD$ consists of a group homomorphism $Z_\sigma\colon\Gamma\ra\bC$ (\emph{central charge}), and a collection of full additive subcategories $P_\sigma=\{P_\sigma(\phi)\}_{\phi\in\bR}$ of $\cD$ (\emph{$\sigma$-semistable objects of phase $\phi$}), such that:
\begin{enumerate}
    \item $Z_\sigma(E)\coloneqq Z_\sigma(\cl([E]))\in\bR_{>0}\cdot e^{i\pi\phi}$ for any $0\neq E\in P_\sigma(\phi)$,
    \item $P_\sigma(\phi+1)=P_\sigma(\phi)[1]$ for any $\phi\in\bR$,
    \item\label{def:noHombigtosmall} $\Hom(E_1,E_2)=0$ if $E_i\in P_\sigma(\phi_i)$ and $\phi_1>\phi_2$,
    \item for any $E\in\cD$,  there exist exact triangles $E_{i-1}\ra E_i\ra A_i\xrightarrow{+1}$ for $1\leq i\leq n$, such that $E_0=0$, $E_n=E$, $A_i\in P_\sigma(\phi_i)$, and $\phi_1>\cdots>\phi_n$,
    \item\label{def:supportproperty} (support property \cite{KontsevichSoibelman08}) there exist a constant $C>0$ and a norm $\norm{\cdot}$ on $\Gamma\otimes_\bZ\bR$ such that $\norm{\cl([E])}\leq C|Z_\sigma(E)|$ for any $0\neq E\in\cup_{\phi\in\bR}P_\sigma(\phi)$.
\end{enumerate}
\end{definition}

The collection of exact triangles in (iv) is called the \emph{Harder--Narasimhan filtration} of $E$ and the objects $A_i$ are called the $\sigma$-semistable factors.
The maximal and minimal phases in the filtration are denoted by $\phi_\sigma^+(E)\coloneqq\phi_1$ and $\phi_\sigma^-(E)\coloneqq\phi_n$ and define real-valued functions:
    \[
    \phi_\sigma^\pm\colon\Ob(\cD)\ra\bR.
    \]
Note that when $E$ is a $\sigma$-semistable object, $\phi_\sigma(E)=\phi^+_\sigma(E)=\phi^-_\sigma(E)$ is nothing but the rotation angle of its central charge $Z_\sigma(E)\in\bC$.

Next we recall the definitions of mass function and mass growth function with respect to a Bridgeland stability condition.

\begin{definition}[Mass functions with respect to stability conditions {\cite[\S4.5]{DimitrovHaidenKatzarkov_Dynamical-systems-and-categories}},\cite{Ikeda_Mass-growth-of-objects-and-categorical-entropy}]
Let $E$ be a non-zero object in a triangulated category $\cD$ and let $\sigma$ be a Bridgeland stability condition on $\cD$.
The \emph{mass function} of $E$ with respect to $\sigma$ is the function of the $t$-variable $m_{\sigma,t}(E)\colon\bR\ra\bR_{>0}$ given by
    \[
    m_{\sigma,t}(E)\coloneqq\sum_{k=1}^m|Z_\sigma(A_k)|e^{\phi_\sigma(A_k)t},
    \]
where $A_1,\ldots,A_m$ are the $\sigma$-semistable factors of $E$.
\end{definition}

\begin{definition}[Mass growth function {\cite[\S4.5]{DimitrovHaidenKatzarkov_Dynamical-systems-and-categories}},\cite{Ikeda_Mass-growth-of-objects-and-categorical-entropy}]
Let $F\colon\cD\ra\cD$ be an endofunctor of a triangulated category $\cD$. The \emph{mass growth function} of $F$ with respect to a Bridgeland stability condition $\sigma$ on $\cD$ is the function $h_{\sigma,t}(F)\colon\bR\ra[-\infty,\infty)$ in variable $t$ given by
    \[
    h_{\sigma,t}(F)\coloneqq\sup_{E\in\cD}\Bigg\{\limsup_{n\ra\infty}\frac{\log m_{\sigma,t}(F^nE)}{n}\Bigg\}.
    \]
\end{definition}

\noindent The relationship between $h_t(F)$ and $h_{\sigma,t}(F)$ was suggested in \cite[\S4.5]{DimitrovHaidenKatzarkov_Dynamical-systems-and-categories} and later proved by Ikeda \cite{Ikeda_Mass-growth-of-objects-and-categorical-entropy}.

\begin{theorem}[{\cite[Theorem~1.1]{Ikeda_Mass-growth-of-objects-and-categorical-entropy}}]
\label{theorem:Ikedamassentropybound}
With notation as above, assume that $\cD$ has a split generator $G$.
Then
    \[
    h_{\sigma,t}(F)=\limsup_{n\ra\infty}\frac{\log m_{\sigma,t}(F^nG)}{n}.
    \]
Moreover,
    \[
    h_{\sigma,t}(F)\leq h_t(F)<\infty
    \]
for any stability condition $\sigma$, and $h_{\sigma,t}(F)=h_{\sigma',t}(F)$ if $\sigma$ and $\sigma'$ lie in the same connected component of the space of Bridgeland stability conditions on $\cD$.
\end{theorem}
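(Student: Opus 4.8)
The plan is to deduce all three assertions from a single structural property of the mass function, recorded first as a lemma: for any exact triangle $A\to B\to C\to A[1]$ one has $m_{\sigma,t}(B)\le m_{\sigma,t}(A)+m_{\sigma,t}(C)$, while $m_{\sigma,t}(H[n])=e^{nt}m_{\sigma,t}(H)$ and $m_{\sigma,t}$ is additive on direct sums. The subadditivity is the arbitrary-$t$ analogue of Bridgeland's triangle inequality for masses and follows from Harder--Narasimhan theory together with the fact that the $Z_\sigma$ of the semistable factors lie in a common half-plane. Granting this, the formula $h_{\sigma,t}(F)=\limsup_{n\to\infty}\frac1n\log m_{\sigma,t}(F^nG)$ is proved as follows. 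The inequality ``$\ge$'' is immediate, since $G$ is one of the objects over which the supremum defining $h_{\sigma,t}(F)$ is taken. For ``$\le$'', fix $E\in\cD$; because $G$ is a split generator there is a finite filtration $0=A_0\to\cdots\to A_m=E\oplus E'$ with $\Cone(A_{k-1}\to A_k)\cong G[n_k]$. Applying the exact functor $F^n$ and iterating the three properties above gives $m_{\sigma,t}(F^nE)\le m_{\sigma,t}(F^nE\oplus F^nE')\le C_E\cdot m_{\sigma,t}(F^nG)$ with $C_E:=\sum_k e^{n_kt}$ independent of $n$; taking $\frac1n\log$ and $\limsup$ yields the claim.

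For $h_{\sigma,t}(F)\le h_t(F)<\infty$, I would repeat the previous estimate over the filtrations occurring in the definition of $\delta_t(G,F^nG)$, obtaining $m_{\sigma,t}(F^nG)\le m_{\sigma,t}(G)\cdot\delta_t(G,F^nG)$; applying $\frac1n\log$ and letting $n\to\infty$ gives $h_{\sigma,t}(F)\le\lim_n\frac1n\log\delta_t(G,F^nG)=h_t(F)$ by \autoref{def:catentropy} and \autoref{thm:entropy_via_ext_distance}, and finiteness of $h_t(F)$ is \autoref{thm:convexity_and_finiteness}.

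The substantive point is the invariance statement. The key input is Bridgeland's description of the local structure of the space of stability conditions: there is $\eta_0>0$ such that whenever $\sigma'$ is within distance $\eta<\eta_0$ of $\sigma$, every $\sigma$-semistable object $A$ has all of its $\sigma'$-Harder--Narasimhan phases in the interval $(\phi_\sigma(A)-\eta,\phi_\sigma(A)+\eta)$, and, using the support property to bound the operator norm of $Z_{\sigma'}-Z_\sigma$ and the fact that nearby phases keep the relevant central charges in a half-plane, $m_{\sigma',t}(A)\le C(\eta,t)\cdot m_{\sigma,t}(A)$ with $C(\eta,t)$ independent of $A$. Decomposing an arbitrary $E$ into its $\sigma$-semistable factors and invoking subadditivity of $m_{\sigma',t}$ upgrades this to $m_{\sigma',t}(E)\le C(\eta,t)\cdot m_{\sigma,t}(E)$ for all $E$; combining with the formula just proved, $h_{\sigma',t}(F)\le h_{\sigma,t}(F)$, and by symmetry equality holds for nearby $\sigma,\sigma'$. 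For $\sigma,\sigma'$ in the same connected component one connects them by a path, covers its compact image by finitely many $\eta_0$-balls, and chains the equalities.

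The main obstacle is making the comparison $m_{\sigma',t}(A)\le C(\eta,t)\cdot m_{\sigma,t}(A)$ on semistable objects genuinely uniform in $A$: this requires tracking simultaneously the deformation of the central charge, of the semistable phases, and of the support-property norm as $\sigma$ is perturbed, and is where the bulk of Ikeda's argument (building on Bridgeland's local analysis of the stability manifold) actually lies. The first two assertions, by contrast, are elementary once the subadditivity lemma is in hand.
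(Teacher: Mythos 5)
The paper does not prove this theorem: it is quoted verbatim from Ikeda \cite[Theorem~1.1]{Ikeda_Mass-growth-of-objects-and-categorical-entropy}, so there is no in-text proof to compare against. Judged on its own terms, your outline is a faithful reconstruction of the structure of Ikeda's argument. The first two assertions do reduce to the three structural properties of $m_{\sigma,t}$ you isolate (subadditivity over exact triangles, $m_{\sigma,t}(H[n])=e^{nt}m_{\sigma,t}(H)$, additivity over direct sums), and your split-generator computations --- $m_{\sigma,t}(F^nE)\leq C_E\, m_{\sigma,t}(F^nG)$ with $C_E$ independent of $n$, and $m_{\sigma,t}(F^nG)\leq m_{\sigma,t}(G)\,\delta_t(G,F^nG)$ --- are exactly right. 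Your sketch of the deformation-invariance (control on the spread of $\sigma'$-Harder--Narasimhan phases of a $\sigma$-semistable object, comparison of central charges via the support property, then a chain of $\eta_0$-balls along a path joining $\sigma$ to $\sigma'$) is the correct mechanism, and you correctly flag that establishing the uniform bound $m_{\sigma',t}(A)\leq C(\eta,t)\,m_{\sigma,t}(A)$ over all semistable $A$ is the substantive part of Ikeda's paper; you gesture at it but do not prove it.

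One caution beyond what you already acknowledge: the subadditivity $m_{\sigma,t}(B)\leq m_{\sigma,t}(A)+m_{\sigma,t}(C)$ for an exact triangle is itself a real lemma, not a routine consequence of ``Harder--Narasimhan theory.'' The standard proof concatenates the HN filtrations of $A$ and $C$ to get a filtration of $B$ with semistable subquotients in the wrong phase order, and then shows by a local reordering/merging argument (comparing the $t$-weighted masses before and after each swap or merge, using that adjacent factors with nearby phases have central charges confined to a half-plane) that passing to the genuine HN filtration of $B$ cannot increase $\sum_i |Z(A_i)|e^{\phi(A_i)t}$. That inductive step is where the work is hidden in your opening sentence. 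Nothing you wrote is incorrect, but the two hard ingredients --- the reordering lemma behind subadditivity and the uniform mass comparison under deformation --- are precisely the parts that remain sketches in your account.
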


\noindent In general, it is not known whether the categorical entropy $h_t(F)$ always coincides with the mass growth $h_{\sigma,t}(F)$.
Nevertheless, we prove in the following theorem that their linear growth rates at infinity coincide for \emph{any} Bridgeland stability condition $\sigma$. 
Moreover, the average displacements of $\{\phi_\sigma^\pm(F^nG)\}\subseteq\bR$ also give the same numbers.

\begin{theorem}[Shifting numbers via stability conditions]
	\label{theorem:computeviastability}
Let $F\colon\cD\ra\cD$ be an endofunctor of a triangulated category $\cD$ with a split generator $G$, and let $\sigma$ be a Bridgeland stability condition on $\cD$. Then the following limits exist and coincide with the upper shifting number:
    \[
    \lim_{n\ra\infty}\frac{\phi^+_\sigma(F^nG)}{n}=\lim_{t\ra\infty}\frac{h_{\sigma,t}(F)}{t}=\tau^+(F).
    \]
Moreover, if $\cD$ admits a Serre functor, then the following limits exist and coincide with the lower shifting number:
    \[
    \lim_{n\ra\infty}\frac{\phi^-_\sigma(F^nG)}{n}=\lim_{t\ra-\infty}\frac{h_{\sigma,t}(F)}{t}=\tau^-(F).
    \]
Additionally, the following inequalities for the mass growth function hold:
\begin{align*}
    t\cdot \tau^+(F) & \leq h_{\sigma,t}(F) \leq h_{\sigma,0}(F) + t \cdot \tau^+(F) && \text{for }t\geq 0,\\
    t\cdot \tau^-(F) & \leq h_{\sigma,t}(F) \leq h_{\sigma,0}(F) + t \cdot \tau^-(F) && \text{for }t\leq 0.
\end{align*}
\end{theorem}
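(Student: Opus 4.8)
The plan is to reduce the statement about phases $\phi_\sigma^\pm$ to the already-proven statement about the $\Ext$-distances $\ep^\pm$ from Theorem~\ref{theorem:shiftingnumberviaentropy}, by sandwiching $\phi_\sigma^+(F^nG)$ and $\phi_\sigma^-(F^nG)$ between affine functions of $\ep^+(G,F^nG)$ and $\ep^-(G,F^nG)$. First I would record the elementary ``interpolation'' bounds for a single Harder--Narasimhan filtration: if $E$ has semistable factors of phases $\phi_\sigma^+(E)=\phi_1>\cdots>\phi_n=\phi_\sigma^-(E)$, then $\Hom(G,E[-k])\neq 0$ forces, via axiom \ref{def:noHombigtosmall} applied to the HN factors of $G$ and of $E[-k]$, that $\phi_\sigma^-(G)-1\le k+\phi_\sigma^+(E)$ is impossible to violate in one direction and similarly $k+\phi_\sigma^-(E)\le \phi_\sigma^+(G)+1$ in the other; concretely one gets constants $c_1=\lceil \phi_\sigma^+(G)\rceil - \lfloor\phi_\sigma^-(G)\rfloor+1$ (depending only on $G$ and $\sigma$, not on $E$) such that
\[
\ep^+(G,E)-c_1 \;\le\; \phi_\sigma^+(E) \;\le\; \ep^+(G,E)+c_1,\qquad
\ep^-(G,E)-c_1 \;\le\; \phi_\sigma^-(E) \;\le\; \ep^-(G,E)+c_1 .
\]
Applying this with $E=F^nG$, dividing by $n$ and letting $n\to\infty$ gives $\lim_n \phi_\sigma^+(F^nG)/n=\tau^+(F)$ immediately, and likewise $\lim_n\phi_\sigma^-(F^nG)/n=\tau^-(F)$. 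Note the lower-phase statement is where the Serre functor is needed: the two-sided control of $\ep^-$ in Theorem~\ref{theorem:shiftingnumberviaentropy} (equivalently, that $\tau^-$ is finite and behaves well) was itself established under that hypothesis, so I would simply inherit it.

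Next I would handle the mass-growth side. For $t>0$, a single-object estimate gives, for any nonzero $E$ with HN factors $A_k$,
\[
\min_k |Z_\sigma(A_k)|\cdot e^{\phi_\sigma^+(E)\,t}\;\le\; m_{\sigma,t}(E)\;\le\; \Big(\sum_k|Z_\sigma(A_k)|\Big)\, e^{\phi_\sigma^+(E)\,t},
\]
and the support property together with finiteness of $h_t$ lets me bound $\log$ of the prefactors by $o(n)$ when $E=F^nG$ — more precisely, $\sum_k|Z_\sigma(A_k)|=m_{\sigma,0}(F^nG)$, whose exponential growth rate is $h_{\sigma,0}(F)<\infty$, and $\min_k|Z_\sigma(A_k)|$ is bounded below using the support property and the fact that the classes $\cl([A_k])$ lie in a lattice, so $\log(1/\min_k|Z_\sigma(A_k)|)$ grows at most linearly with the same rate. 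Taking $\log(-)/n$ and $n\to\infty$ via Theorem~\ref{theorem:Ikedamassentropybound} yields
\[
t\cdot\tau^+(F)\;\le\; h_{\sigma,t}(F)\;\le\; h_{\sigma,0}(F)+t\cdot\tau^+(F)\qquad(t\ge 0),
\]
exactly parallel to the entropy computation in the proof of Theorem~\ref{theorem:shiftingnumberviaentropy}; dividing by $t$ and sending $t\to+\infty$ gives $\lim_{t\to\infty}h_{\sigma,t}(F)/t=\tau^+(F)$. The case $t<0$ is symmetric, using $\phi_\sigma^-$ in place of $\phi_\sigma^+$, and again the Serre-functor hypothesis enters only through the already-available finiteness of $\tau^-(F)$.

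The main obstacle I anticipate is getting the lower bound $\min_k |Z_\sigma(A_k)|\ge e^{-o(n)}$ for the semistable factors of $F^nG$ genuinely under control, since a priori a very ``light'' semistable factor of tiny mass could in principle have a large or small phase and spoil the one-sided estimates. This is handled by the support property (Definition~\ref{def:stability}\ref{def:supportproperty}): $|Z_\sigma(A_k)|\ge C^{-1}\|\cl([A_k])\|$, and $\cl([A_k])$ is a nonzero lattice vector whose norm is bounded above by $\|\cl([F^nG])\|$ up to the number of HN factors — and the latter is controlled by $\ep_0$, hence by $h_0(F)$. So the worst case is $\min_k|Z_\sigma(A_k)|\ge C^{-1}\cdot(\text{something growing at most like }e^{h_{\sigma,0}(F)\cdot n})^{-1}$, which contributes only to the $h_{\sigma,0}(F)$ constant and not to the slope — precisely as in Ikeda's argument. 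A secondary, purely bookkeeping point is to make sure all the ``constants'' $c_1$ in the phase sandwich really depend only on $(\sigma,G)$ and not on $n$; this follows because the HN filtration of $G$ is fixed once and for all, so $\phi_\sigma^\pm(G)$ are fixed numbers.
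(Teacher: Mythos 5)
Your argument has a genuine gap at the very first step, the claimed two‑sided ``interpolation'' bound
\[
\ep^+(G,E)-c_1 \;\le\; \phi_\sigma^+(E) \;\le\; \ep^+(G,E)+c_1 ,
\]
with $c_1$ depending only on $(G,\sigma)$. One direction is fine and is exactly what the paper uses: if $\Hom(G,E[-k])\neq0$ then axiom \ref{def:noHombigtosmall} forces $\phi_\sigma^-(G)\le\phi_\sigma^+(E)-k$, so $\ep^+(G,E)\le\phi_\sigma^+(E)-\phi_\sigma^-(G)$, which gives $\liminf_n\phi_\sigma^+(F^nG)/n\ge\tau^+(F)$. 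But the reverse direction, $\phi_\sigma^+(E)\le\ep^+(G,E)+c_1$ uniformly in $E$, does not follow from axiom \ref{def:noHombigtosmall}: that axiom only \emph{forbids} morphisms between semistable objects with the wrong phases, it does not \emph{produce} a nonzero $\Hom(G,E[-k])$ in a degree $k$ close to the top HN phase of $E$. The Hom with $G$ can cancel across the HN triangles, so the top of the $\Ext$‑spectrum need not see the top HN factor. (The same problem occurs for your lower bound $\phi_\sigma^-(E)\ge\ep^-(G,E)-c_1$; and the second inequality you wrote, ``$k+\phi_\sigma^-(E)\le\phi_\sigma^+(G)+1$,'' is not a consequence of axiom \ref{def:noHombigtosmall} at all without Serre duality.) The paper does not prove such a pointwise bound; instead it closes the gap with the nontrivial inequality $h_{\sigma,t}(F)\le h_t(F)$ of Ikeda (Theorem~\ref{theorem:Ikedamassentropybound}), which you cite only to express $h_{\sigma,t}$ as a $\limsup$ over $n$, not for the comparison with $h_t$. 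The paper's logic is: the easy HN inequality gives $\liminf_n\phi_\sigma^+(F^nG)/n\ge\tau^+(F)$; the mass‑function sandwich (your second paragraph, which is essentially correct, though you over‑engineer the lower bound --- the support property already gives a uniform $|Z_\sigma(A_k)|\ge C'$, and you only need the single top factor) gives $\lim_{t\to\infty}h_{\sigma,t}(F)/t=\limsup_n\phi_\sigma^+(F^nG)/n$; and Ikeda's $h_{\sigma,t}\le h_t$ gives $\limsup_n\phi_\sigma^+(F^nG)/n\le\tau^+(F)$, closing the squeeze. Without that last input, your two‑sided sandwich is an unproven claim, and it is in fact essentially equivalent to the hard inequality you are trying to avoid.

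A secondary error: you say the Serre functor hypothesis is needed because the ``two‑sided control of $\ep^-$'' in Theorem~\ref{theorem:shiftingnumberviaentropy} was established under it. That is not so --- Theorem~\ref{theorem:shiftingnumberviaentropy} proves $\tau^\pm(F)\in\bR$ with no Serre functor, using only convexity and finiteness of $h_t(F)$. The Serre functor enters Theorem~\ref{theorem:computeviastability} at a specific step: $\Hom(G,F^nG[-\ep^-(G,F^nG)])\neq0$ is converted by Serre duality to $\Hom(F^nG[-\ep^-(G,F^nG)],\bS G)\neq0$, and only then does axiom \ref{def:noHombigtosmall} yield the lower bound $\ep^-(G,F^nG)\ge\phi^-_\sigma(F^nG)-\phi^+_\sigma(\bS G)$. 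That is precisely the direction your heuristic gets wrong.
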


\begin{proof}
The proof follows the same idea as in \cite[Proposition~3.10, Lemma~3.12]{KikutaOuchiTakahashi}.
Let $\sigma$ be any Bridgeland stability condition on $\cD$.
By the support property (\autoref{def:stability}\ref{def:supportproperty}), there exists a constant $C'>0$ such that $|Z_\sigma(E)|>C'$ holds for any $\sigma$-semistable object $E$. Suppose that $t>0$. Then
    \[
    C'\cdot e^{\phi^+_\sigma(F^nG)t}\leq m_{\sigma,t}(F^nG)\leq
    m_{\sigma,0}(F^nG)\cdot e^{\phi^+_\sigma(F^nG)t}.
    \]
By applying $\frac{\log(\cdot)}{n}$ and taking $n\ra\infty$, one has
    \begin{equation}
    \label{eqn:stability_prelimit}
    t\cdot\limsup_{n\ra\infty}\frac{\phi^+_\sigma(F^nG)}{n}\leq h_{\sigma,t}(F)\leq
    h_{\sigma,0}(F)+t\cdot\limsup_{n\ra\infty}\frac{\phi^+_\sigma(F^nG)}{n}.
    \end{equation}
Note that $0\leq h_{\sigma,0}(F)\leq h_0(F)<\infty$ is a real number. By dividing $t$ and letting $t\ra\infty$, one obtains
    \[
    \lim_{t\ra\infty}\frac{h_{\sigma,t}(F)}{t}=
    \limsup_{n\ra\infty}\frac{\phi^+_\sigma(F^nG)}{n}.
    \]

On the other hand, recall that
\[
\ep^+(G,F^nG)=\max\{k\in\bZ\colon\Hom(G,F^nG[-k])\neq0\}.
\]
Hence
    \[
    \phi^+_\sigma(F^nG)-\phi^-_\sigma(G)\geq\ep^+(G,F^nG).
    \]
Thus
    \[
    \liminf_{n\ra\infty}\frac{\phi^+_\sigma(F^nG)}{n}\geq\lim_{n\ra\infty}\frac{\ep^+(G,F^nG)}{n}=\tau^+(F)=\lim_{t\ra\infty}\frac{h_t(F)}{t}
    \]
by \autoref{theorem:shiftingnumberviaentropy}.
Combining these inequalities together with \autoref{theorem:Ikedamassentropybound} gives
    \[
    \liminf_{n\ra\infty}\frac{\phi^+_\sigma(F^nG)}{n}\geq\tau^+(F)=\lim_{t\ra\infty}\frac{h_t(F)}{t}\geq\lim_{t\ra\infty}\frac{h_{\sigma,t}(F)}{t}=\limsup_{n\ra\infty}\frac{\phi^+_\sigma(F^nG)}{n}.
    \]
This proves the first part of the statement, namely that the limits exist and:
    \[
    \lim_{n\ra\infty}\frac{\phi^+_\sigma(F^nG)}{n}=\lim_{t\ra\infty}\frac{h_{\sigma,t}(F)}{t}=\tau^+(F).
    \]
Plug this into \autoref{eqn:stability_prelimit} one obtains
\begin{align*}
    t\cdot \tau^+(F) & \leq h_{\sigma,t}(F) \leq h_{\sigma,0}(F) + t \cdot \tau^+(F) && \text{for }t\geq 0.
\end{align*}

Now we prove the second part of the statement.
Assume that $t<0$, so we have now instead:
    \[
    C'\cdot e^{\phi^-_\sigma(F^nG)t}\leq m_{\sigma,t}(F^nG)\leq
    m_{\sigma,0}(F^nG)\cdot e^{\phi^-_\sigma(F^nG)t}.
    \]
Again by taking $\frac{\log(\cdot)}{n}$ and $\lim_{n\to +\infty}$ (and remembering $t<0$ now):
\begin{multline*}
    t\cdot\liminf_{n\ra\infty}\frac{\phi^-_\sigma(F^nG)}{n}=\limsup_{n\ra\infty}\frac{\phi^-_\sigma(F^nG)t}{n}\leq h_{\sigma,t}(F) \\ \leq
    h_{\sigma,0}(F)+\limsup_{n\ra\infty}\frac{\phi^-_\sigma(F^nG)t}{n}=h_{\sigma,0}(F)+t\cdot\liminf_{n\ra\infty}\frac{\phi^-_\sigma(F^nG)}{n}.
\end{multline*}
It follows that
    \[
    \lim_{t\ra-\infty}\frac{h_{\sigma,t}(F)}{t}=\liminf_{n\ra\infty}\frac{\phi^-_\sigma(F^nG)}{n}.
    \]

Recall that
\[
\ep^-(G,F^nG)=\min\{k\in\bZ\colon\Hom(G,F^nG[-k])\neq0\}.
\]
We assumed that $\cD$ admits a Serre functor $\bS$ so we have
\[
\Hom(F^nG[-\ep^-(G,F^nG)],\bS G)\neq0.
\]
Hence
\begin{align*}
    \phi^+_\sigma(\bS G)-\phi^-_\sigma(F^nG)&\geq-\ep^-(G,F^nG), \text{ or equivalently}\\
    \ep^-(G,F^nG) &\geq \phi^-_\sigma(F^nG) - \phi^+_\sigma(\bS G).
\end{align*}
Combined with \autoref{theorem:shiftingnumberviaentropy} this gives
    \[
    \lim_{t\ra-\infty}\frac{h_t(F)}{t}=\tau^-(F)=\lim_{n\ra\infty}\frac{\ep^-(G,F^nG)}{n}\geq\limsup_{n\ra\infty}\frac{\phi^-_\sigma(F^nG)}{n}.
    \]
Together with the bound $h_{\sigma,t}(F)\leq h_t(F)$ from \autoref{theorem:Ikedamassentropybound} (and recalling $t<0$ now) we find
    \[
    \liminf_{n\ra\infty}\frac{\phi^-_\sigma(F^nG)}{n}=\lim_{t\ra-\infty}\frac{h_{\sigma,t}(F)}{t}\geq\lim_{t\ra-\infty}\frac{h_t(F)}{t}=\tau^-(F)\geq\limsup_{n\ra\infty}\frac{\phi^-_\sigma(F^nG)}{n}.
    \]
This concludes the proof.
\end{proof}



\subsection{Further properties of shifting numbers}
	\label{ssec:further_properties_of_shifting_numbers}

We show that the basic properties of Poincar\'e translation numbers listed in the introduction are also satisfied by the shifting numbers of endofunctors.

\begin{proposition}
\label{proposition:basicproperties}
Let $F,G\colon\cD\ra\cD$ be endofunctors of a triangulated category $\cD$.
Then we have:
    \begin{enumerate}
        \item\label{proposition:compatiblewithshifts} $\tau^\pm(F\circ[k])=\tau^\pm(F)+k$ for any $k\in\bZ$. In particular, $\tau([k])=\tau^\pm([k])=k$.
        \item\label{proposition:compatiblewithpowers} $\tau^\pm(F^n)=n\tau^\pm(F)$ for any positive integer $n$.
        \item\label{proposition:inverse_tau} If $F$ is an autoequivalence and $\cD$ admits a Serre functor, then $\tau^\pm(F^{-1})=-\tau^\mp(F)$.
        \item\label{proposition:conjugacyinvariant_tau} $\tau^\pm(FG)=\tau^\pm(GF)$. In particular, if $G$ is an autoequivalence of $\cD$, then $\tau^\pm(GFG^{-1})=\tau^\pm(F)$.
    \end{enumerate}
\end{proposition}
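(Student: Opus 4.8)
The plan is to treat the four parts in turn, fixing throughout a split generator $G$. Parts \ref{proposition:compatiblewithshifts} and \ref{proposition:compatiblewithpowers} are formal consequences of the definition together with \autoref{theorem:shiftingnumberviaentropy}; part \ref{proposition:inverse_tau} is an application of Serre duality; and part \ref{proposition:conjugacyinvariant_tau} is the one requiring a genuine idea, since $F$ need not be invertible and one cannot conjugate directly. For part \ref{proposition:compatiblewithshifts}: since $[1]$ commutes with every exact functor, $(F\circ[k])^n\cong F^n\circ[kn]$, so $\Hom(G,(F\circ[k])^nG[-j])\cong\Hom(G,F^nG[-(j-kn)])$ and hence $\ep^\pm(G,(F\circ[k])^nG)=\ep^\pm(G,F^nG)+kn$; dividing by $n$ and letting $n\to\infty$ gives $\tau^\pm(F\circ[k])=\tau^\pm(F)+k$. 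Applying this with $F=\operatorname{id}_\cD$ — for which $\ep^\pm(G,\operatorname{id}^nG)=\ep^\pm(G,G)$ is independent of $n$, so $\tau^\pm(\operatorname{id}_\cD)=0$ — yields $\tau^\pm([k])=k$, hence $\tau([k])=k$. For part \ref{proposition:compatiblewithpowers}: applying the definition to the endofunctor $F^n$,
\[
\tau^\pm(F^n)=\lim_{m\to\infty}\frac{\ep^\pm(G,F^{nm}G)}{m}=n\cdot\lim_{m\to\infty}\frac{\ep^\pm(G,F^{nm}G)}{nm}=n\cdot\tau^\pm(F),
\]
the last equality because the full sequence $\ell\mapsto\ell^{-1}\ep^\pm(G,F^\ell G)$ converges to $\tau^\pm(F)$ by \autoref{theorem:shiftingnumberviaentropy}, hence so does the subsequence $\ell=nm$.

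For part \ref{proposition:conjugacyinvariant_tau}, which is the main point, I would first establish $h_t(FG)=h_t(GF)$ for every $t\in\bR$; then $\tau^\pm(FG)=\lim_{t\to\pm\infty}h_t(FG)/t=\lim_{t\to\pm\infty}h_t(GF)/t=\tau^\pm(GF)$ by \autoref{theorem:shiftingnumberviaentropy}. The device is the semiconjugacy identity $(FG)^{n+1}=F\circ(GF)^n\circ G$: combined with the submultiplicativity $\delta_t(A,C)\le\delta_t(A,B)\,\delta_t(B,C)$ and the monotonicity $\delta_t(\Phi A,\Phi B)\le\delta_t(A,B)$ of the complexity function \cite{DimitrovHaidenKatzarkov_Dynamical-systems-and-categories}, inserting the intermediate objects $FG$, namely $FP$ and $(GF)^nP$ for a split generator $P$, gives
\[
\delta_t\big(P,(FG)^{n+1}P\big)\ \le\ \delta_t(P,FP)\cdot\delta_t(P,GP)\cdot\delta_t\big(P,(GF)^nP\big).
\]
All three factors on the right are finite and positive — the positivity because a functor annihilating a split generator is the zero functor, contradicting the standing ``not virtually zero'' hypothesis — so applying $n^{-1}\log(-)$ and letting $n\to\infty$ yields $h_t(FG)\le h_t(GF)$ (using that $h_t$ is finite by \autoref{thm:convexity_and_finiteness}); the reverse inequality is obtained by interchanging $F$ and $G$. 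The assertion about $GFG^{-1}$ then follows by applying $\tau^\pm(F_1F_2)=\tau^\pm(F_2F_1)$ with $F_1=G$ and $F_2=FG^{-1}$.

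For part \ref{proposition:inverse_tau}, let $\bS$ denote the Serre functor. Serre duality reindexes nonvanishing degrees by $k\mapsto-k$, giving, whenever these quantities are defined, $\ep^+(E_1,E_2)=-\ep^-(E_2,\bS E_1)$ and $\ep^-(E_1,E_2)=-\ep^+(E_2,\bS E_1)$. Since $F^n$ is an equivalence, $\Hom(G,F^{-n}G[-k])\cong\Hom(F^nG,G[-k])$, so $\ep^\pm(G,F^{-n}G)=\ep^\pm(F^nG,G)$, and applying the identities above gives
\[
\ep^+(G,F^{-n}G)=-\ep^-(G,\bS F^nG),\qquad \ep^-(G,F^{-n}G)=-\ep^+(G,\bS F^nG).
\]
As $\bS$ commutes up to natural isomorphism with the autoequivalence $F^n$, we have $\bS F^nG\cong F^n(\bS G)$ with $\bS G$ again a split generator. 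Replacing the first argument $G$ by $\bS G$ changes $\ep^\pm(G,F^n(\bS G))$ by an amount bounded uniformly in $n$ — the same elementary change-of-generator estimate underlying the independence of the split generator in \autoref{theorem:shiftingnumberviaentropy} — so $\lim_n n^{-1}\ep^\pm(G,\bS F^nG)=\tau^\pm(F)$. Combining the displays, $\tau^+(F^{-1})=-\tau^-(F)$ and $\tau^-(F^{-1})=-\tau^+(F)$, i.e.\ $\tau^\pm(F^{-1})=-\tau^\mp(F)$.

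The main obstacle is part \ref{proposition:conjugacyinvariant_tau}: the lack of invertibility of $F$ rules out a literal conjugation argument, and the resolution is to pass to the complexity side, where the exact identity $(FG)^{n+1}=F(GF)^nG$ together with submultiplicativity of $\delta_t$ closes the argument cleanly. The only other non-formal ingredient is the uniform change-of-split-generator bound for $\ep^\pm$ invoked in part \ref{proposition:inverse_tau}, which is itself a short consequence of $G$ and $\bS G$ being built from one another by finitely many cones with bounded shifts.
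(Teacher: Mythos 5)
Your proof is correct, but it takes a more self-contained and somewhat different route than the paper. The paper establishes all four parts by invoking $\tau^\pm(F)=\lim_{t\to\pm\infty}h_t(F)/t$ from \autoref{theorem:shiftingnumberviaentropy} and then citing known identities for the entropy function: $h_t(F\circ[k])=h_t(F)+kt$ and $h_t(F^n)=nh_t(F)$ from \cite{DimitrovHaidenKatzarkov_Dynamical-systems-and-categories,KiTa}, $h_t(F^{-1})=h_{-t}(F)$ from \cite{FFO}, and $h_t(FG)=h_t(GF)$ from \cite{Kikuta}. You instead prove parts \ref{proposition:compatiblewithshifts}--\ref{proposition:inverse_tau} directly from the $\ep^\pm$-definition of $\tau^\pm$: part \ref{proposition:compatiblewithshifts} via the exact reindexing $\ep^\pm(G,(F[k])^nG)=\ep^\pm(G,F^nG)+kn$, part \ref{proposition:compatiblewithpowers} by passing to the subsequence $\ell=nm$ of a convergent sequence, and part \ref{proposition:inverse_tau} by applying Serre duality at the level of $\Hom$-groups together with $\bS F^n\cong F^n\bS$ and a change-of-generator bound. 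For part \ref{proposition:conjugacyinvariant_tau} you re-derive $h_t(FG)=h_t(GF)$ from scratch via the semiconjugacy identity $(FG)^{n+1}=F(GF)^nG$ and the submultiplicativity and functor-monotonicity of $\delta_t$ from \cite{DimitrovHaidenKatzarkov_Dynamical-systems-and-categories}, rather than citing it. The trade-off is clear: the paper's proof is shorter because it outsources the entropy identities, while yours is longer but more elementary and does not depend on \cite{FFO,Kikuta,KiTa}. One small point worth being explicit about in part \ref{proposition:inverse_tau}: you are replacing the target $F^nG$ by $F^n(\bS G)$ (i.e.\ changing the generator in the \emph{second} slot of $\ep^\pm$), not the first slot as in the standard generator-independence proof; the needed bound $|\ep^\pm(G,F^n\bS G)-\ep^\pm(G,F^nG)|\leq M$ follows because $\bS G$ is built from $G$ by finitely many cones with shifts lying in a fixed window $[-M,M]$, and applying $F^n$ and the cohomological functor $\Hom(G,-)$ preserves that window, but it is a slightly different cone-chasing argument and deserves a sentence.
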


\begin{proof}
These statements all follow straightforwardly from
    \[
    \tau^\pm(F)=\lim_{t\ra\pm\infty}\frac{h_t(F)}{t}
    \]
established in \autoref{theorem:shiftingnumberviaentropy}, and some basic properties of the categorical entropy function $h_t(F)$.

Parts (i) and (ii) follow from the formulas 
\[
	h_t(F\circ[k])=h_t(F)+kt\text{ and } h_t(F^n)=nh_t(F)
	\text{ for any }n\geq1
\]
established in \cite[\S2]{DimitrovHaidenKatzarkov_Dynamical-systems-and-categories} or \cite[Lemma~3.7]{KiTa}.
If $\cD$ admits a Serre functor, then $h_t(F^{-1})=h_{-t}(F)$ by \cite[Lemma~2.11]{FFO} and this proves part (iii).
Part (iv) follows from $h_t(FG)=h_t(GF)$, see e.g. \cite[Lemma~2.8]{Kikuta}.
\end{proof}

\noindent We will use the following lemma in the next section.

\begin{lemma}
\label{lem:preserveheartthenzero}
Let $F\colon\cD\ra\cD$ be an endofunctor of a triangulated category $\cD$. Suppose there is a split generator $G$ of $\cD$ and an integer $M\geq0$ such that
    \[
    \Hom(G,F^nG[k])=0 \text{ for any } |k|\geq M\text{ and } n\geq0,
    \]
(for instance, when $F$ preserves a bounded $t$-structure of finite cohomological dimension). Then $\tau(F)=\tau^\pm(F)=0$.
\end{lemma}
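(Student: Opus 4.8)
The plan is to use the characterization $\tau^\pm(F)=\lim_{t\to\pm\infty}\frac{h_t(F)}{t}$ from \autoref{theorem:shiftingnumberviaentropy} together with \autoref{theorem:shiftingnumberviaentropy}'s computation of $\tau^\pm$ via the Ext-distances $\ep^\pm(G,F^nG)$. The hypothesis says that $\Hom(G,F^nG[k])=0$ whenever $|k|\geq M$, uniformly in $n\geq 0$. Unwinding the definitions in \autoref{def:extdistancefunction}, this says precisely that $\ep^+(G,F^nG)\leq M$ and $\ep^-(G,F^nG)\geq -M$ for every $n\geq 0$ (note $\Hom(G,F^nG[-k])\neq 0$ forces $|{-k}|<M$, i.e.\ $-M<k<M$). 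Since the relevant Hom-spaces are nonzero for all $n$ by the remark following \autoref{def:extdistancefunction}, both $\ep^+(G,F^nG)$ and $\ep^-(G,F^nG)$ are well-defined and lie in the bounded interval $(-M,M)$.

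First I would record the boundedness: $|\ep^\pm(G,F^nG)|\leq M$ for all $n\geq 0$. Then dividing by $n$ and letting $n\to\infty$ gives immediately
\[
\tau^+(F)=\lim_{n\to\infty}\frac{\ep^+(G,F^nG)}{n}=0
\quad\text{and}\quad
\tau^-(F)=\lim_{n\to\infty}\frac{\ep^-(G,F^nG)}{n}=0,
\]
using that the numerators stay bounded. Consequently $\tau(F)=\tfrac12(\tau^+(F)+\tau^-(F))=0$ as well. This handles the main assertion directly from \autoref{theorem:shiftingnumberviaentropy}, with no recourse to stability conditions or Serre functors needed.

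It remains to justify the parenthetical remark that the hypothesis holds when $F$ preserves a bounded $t$-structure of finite cohomological dimension. Here I would argue as follows: if $\cA$ is the heart of such a $t$-structure with cohomological dimension $d$, and $F$ is $t$-exact (or more generally sends $\cA$ into $\cA$, hence commutes suitably with the cohomology functors), then one first reduces to the case where $G$ itself can be taken to lie in $\cA$ — a split generator can always be replaced by one concentrated in the heart, or one bounds the cohomological amplitude of $G$ and absorbs it into the constant $M$. Then $F^nG$ has cohomology objects $H^i(F^nG)$ lying in $\cA$ in a range of degrees $[a,b]$ independent of $n$ (since $F$ preserves $\cA$, it does not spread the amplitude), and the spectral sequence computing $\Hom(G,F^nG[k])$ from $\Hom_{\cA}(H^i(G),H^j(F^nG))$ (which vanishes unless $0\leq j-i\leq d$) forces $\Hom(G,F^nG[k])=0$ outside a bounded window of $k$'s, uniformly in $n$. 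Setting $M$ to be that bound gives the hypothesis.

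The main obstacle is this last point: making precise the claim that preserving a bounded $t$-structure of finite cohomological dimension yields the uniform vanishing. The subtlety is that ``$F$ preserves the $t$-structure'' should be interpreted as $F$ being compatible with the heart (e.g.\ $t$-exact up to shift), so that the cohomological amplitude of $F^nG$ does not grow with $n$; if $F$ merely preserved the heart set-theoretically without exactness this could fail. I would state the hypothesis carefully (or simply take the boundedness of $\ep^\pm$ as the operative assumption, treating the $t$-structure statement as an illustrative sufficient condition) so that the amplitude-control and the $\Ext$-vanishing-in-bounded-range argument go through cleanly. The rest is immediate from the already-established formula for $\tau^\pm$.
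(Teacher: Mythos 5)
Your proof is correct, and in fact more direct than the paper's. You compute $\tau^\pm$ straight from the defining formula $\tau^\pm(F)=\lim_n \ep^\pm(G,F^nG)/n$: the vanishing hypothesis pins $\ep^\pm(G,F^nG)$ inside $(-M,M)$ for every $n$, so the limit is zero — no auxiliary result needed beyond the existence of the limit from \autoref{theorem:shiftingnumberviaentropy}. The paper instead routes through the entropy function: it cites \cite[Lemma~2.11]{DimitrovHaidenKatzarkov_Dynamical-systems-and-categories}, which says the vanishing hypothesis forces $h_t(F)$ to be constant in $t$, and then concludes $\tau^\pm(F)=\lim_{t\to\pm\infty} h_t(F)/t=0$. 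Both paths live inside the same theorem's circle of ideas, but yours avoids the extra citation and extracts slightly less information (constancy of $h_t$ is strictly stronger than vanishing of its slope at $\pm\infty$). Your extended discussion of the parenthetical $t$-structure example is extra work the paper does not carry out; your caveat — that ``$F$ preserves a bounded $t$-structure'' must be read as compatibility with the heart (e.g.\ $t$-exactness up to shift) so the amplitude of $F^nG$ stays uniformly bounded — is a fair and correctly identified subtlety.
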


\begin{proof}
Under the vanishing assumption, $h_t(F)$ is a constant function in $t$ \cite[Lemma~2.11]{DimitrovHaidenKatzarkov_Dynamical-systems-and-categories}. Thus $\tau(F)=\tau^\pm(F)=0$ by \autoref{theorem:shiftingnumberviaentropy}.
\end{proof}



\subsection{Legendre transform of entropy functions}
	\label{ssec:LegendreTransform}
Recall from \autoref{thm:convexity_and_finiteness} that the categorical entropy function $h_t(F)$ of any endofunctor $F$ of a triangulated category is a real-valued convex function in the variable $t$. It is then natural to consider its \emph{Legendre transform}
\[
h^*(F)\colon I^*\ra\bR,
\]
where the domain is
\[
I^*\coloneqq\Big\{t^*\in\bR\colon\sup_{t\in\bR}\Big(t^*t-h_t(F)\Big)<\infty\Big\},
\]
and the value at $t^*\in I^*$, denoted by $h_{t^*}^*(F)$, is defined to be
\[
h_{t^*}^*(F)\coloneqq\sup_{t\in\bR}\Big(t^*t-h_t(F)\Big).
\]

\begin{proposition}
Let $F$ be an endofunctor of a triangulated category, and let $h^*(F)$ be the Legendre transform of the associated categorical entropy function $h_t(F)$. Then
    \begin{enumerate}
        \item\label{prop:Legendredomain} The domain of definition of $h^*(F)$ is $[\tau^-(F),\tau^+(F)]$.
        \item\label{prop:Legendreinf} The minimum of $h^*(F)$ is given by the categorical entropy:
        $$\min\{h^*_{t^*}(F)\colon t^*\in[\tau^-(F),\tau^+(F)]\}=-h_0(F).$$
        \item\label{prop:Legendre_shift}
        Applying a shift to the functor we have that the Legendre transform is also shifted:
        \[
            h^*_{t^*}\left(F\circ [k]\right) = h^{*}_{t^*-k}(F)
        \]
    \end{enumerate}
\end{proposition}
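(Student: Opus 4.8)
The plan is to deduce all three parts directly from the structural results already established, in particular the inequalities relating $h_t(F)$ to the shifting numbers in \autoref{theorem:shiftingnumberviaentropy}, together with elementary convex-analysis facts about the Legendre transform of a real-valued convex function on $\bR$.

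For part \ref{prop:Legendredomain}, recall that for a convex function $g\colon\bR\to\bR$ the Legendre transform $g^*(t^*)=\sup_t(t^*t-g(t))$ is finite precisely when the affine function $t\mapsto t^*t$ does not eventually outgrow $g$ in either direction, i.e.~precisely when $t^*$ lies between the asymptotic slopes $\lim_{t\to-\infty}g(t)/t$ and $\lim_{t\to+\infty}g(t)/t$. (One should note these asymptotic slopes exist because $g(t)/t$ is monotone for $t$ large in absolute value, by convexity; for $g=h_t(F)$ this is already part of \autoref{theorem:shiftingnumberviaentropy}.) Concretely, from the two-sided bounds $t\cdot\tau^+(F)\le h_t(F)\le h_0(F)+t\cdot\tau^+(F)$ for $t\ge0$ and the analogous ones for $t\le0$, one sees: if $t^*>\tau^+(F)$ then $t^*t-h_t(F)\ge (t^*-\tau^+(F))t-h_0(F)\to+\infty$ as $t\to+\infty$, so $t^*\notin I^*$; if $t^*=\tau^+(F)$ then for $t\ge0$ the quantity $t^*t-h_t(F)$ lies between $-h_0(F)$ and $0$, and for $t\le0$ it is controlled by the lower shifting number bound, so the sup is finite. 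Symmetrically $t^*\ge\tau^-(F)$ is forced, and every $t^*\in[\tau^-(F),\tau^+(F)]$ lies in $I^*$. This gives $I^*=[\tau^-(F),\tau^+(F)]$.

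For part \ref{prop:Legendreinf}, use the general identity $\min_{t^*}g^*(t^*)=g^*$ evaluated at any slope realized by $g$, and more to the point the elementary fact $\inf_{t^*}g^*(t^*)=-\sup_{t^*}\inf_t(g(t)-t^*t)$, which by taking $t=0$ is at most $-(-g(0))\cdot$... — more cleanly: $g^*(t^*)\ge t^*\cdot 0 - g(0) = -g(0)$ for every $t^*$, so $\min g^*\ge -h_0(F)$; and equality is attained because $0$ is a subgradient-realizable slope, i.e.~there exists $t^*$ (namely any element of the subdifferential $\partial h(\cdot)(F)$ at $t=0$, which is nonempty and, by part \ref{prop:Legendredomain} reasoning, lies in $[\tau^-(F),\tau^+(F)]$) with $h^*_{t^*}(F)=t^*\cdot 0-h_0(F)=-h_0(F)$. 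Combined with part \ref{prop:Legendredomain} this yields the displayed minimum.

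For part \ref{prop:Legendre_shift}, this is a formal computation using $h_t(F\circ[k])=h_t(F)+kt$ (established in \cite{DimitrovHaidenKatzarkov_Dynamical-systems-and-categories}, and already invoked in the proof of \autoref{proposition:basicproperties}): one has
\[
h^*_{t^*}(F\circ[k])=\sup_{t\in\bR}\bigl(t^*t-h_t(F)-kt\bigr)=\sup_{t\in\bR}\bigl((t^*-k)t-h_t(F)\bigr)=h^*_{t^*-k}(F),
\]
and the domain shifts accordingly, consistently with part \ref{prop:Legendredomain} and \autoref{proposition:basicproperties}\ref{proposition:compatiblewithshifts}. The only genuine obstacle is part \ref{prop:Legendreinf}: one must be slightly careful that the slope-$0$ subgradient of $h_t(F)$ at $t=0$ actually exists and lies in the domain $[\tau^-(F),\tau^+(F)]$ — this follows since $h_t(F)$ is finite and convex on all of $\bR$ (hence continuous, hence subdifferentiable everywhere), and a subgradient at an interior point is automatically bounded by the asymptotic slopes, which are exactly $\tau^\pm(F)$. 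Everything else is routine convex duality.
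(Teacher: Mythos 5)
Your proposal is correct and follows essentially the same route as the paper: part~\ref{prop:Legendredomain} from the two-sided bounds in \autoref{theorem:shiftingnumberviaentropy}, part~\ref{prop:Legendreinf} from the trivial lower bound $h^*_{t^*}(F)\geq -h_0(F)$ together with the nonempty subdifferential of $h_t(F)$ at $t=0$ (the paper phrases this via one-sided derivatives of the difference quotient), and part~\ref{prop:Legendre_shift} by the formal computation from $h_t(F\circ[k])=h_t(F)+kt$. The opening of your part~\ref{prop:Legendreinf} argument is somewhat muddled before you write ``more cleanly,'' but the clean version is exactly the paper's argument.
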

\begin{proof}
Part \ref{prop:Legendredomain} follows straightforwardly from \autoref{theorem:shiftingnumberviaentropy} and the definition of Legendre transform.
To prove \ref{prop:Legendreinf}, first observe that for any $t^*\in[\tau^-(F),\tau^+(F)]$, we have
\[
h^*_{t^*}(F)\geq t^*\cdot0-h_0(F)=-h_0(F).
\]
On the other hand, since $h_t(F)$ is convex, it has a left and right derivative at $0$ because the function
\[
\frac{h_t(F)-h_0(F)}{t}
\]
is increasing for $t\in\bR\backslash\{0\}$.
Hence for any $t^*\in\bR$ satisfying
	\[
	\lim_{t\to 0^-}\frac{h_t(F)-h_0(F)}{t}\leq t^*\leq\lim_{t\to 0^+}\frac{h_t(F)-h_0(F)}{t}
	\]
we have that $h^*_{t^*}(F)=-h_0(F)$ so the value is achieved.

Finally \ref{prop:Legendre_shift} follows from the property $h_t(F\circ [k]) = h_t(F) + kt$ established in \cite{DimitrovHaidenKatzarkov_Dynamical-systems-and-categories} and standard properties of the Legendre transform.
\end{proof}




\section{Examples of shifting numbers}
	\label{sec:examples_of_shifting_numbers}

\paragraph{Outline of section}
In this section, we compute the shifting numbers of the standard autoequivalences of $\cD^b\Coh(X)$, spherical twists, $\bP$-twists, pseudo-Anosov autoequivalences in the sense of \cite{DimitrovHaidenKatzarkov_Dynamical-systems-and-categories}, and an autoequivalence of a Calabi--Yau category that is pseudo-Anosov in a more general sense proposed in \cite{FFHKL}.


\subsection{Standard autoequivalences}
	\label{ssec:standard_autoequivalences}

\subsubsection{Setup}
	\label{sssec:setup_standard_autoequivalences}
Let $X$ be a smooth projective variety over the base field $\tbk$ and $\cD=\cD^b\Coh(X)$ be the bounded derived category of coherent sheaves on $X$. The group of \emph{standard autoequivalences} of $\cD$ is the subgroup of $\Aut(\cD)$ defined by:
    \[
    \Aut_\std(\cD)\coloneqq(\Aut(X)\ltimes\Pic(X))\times\bZ[1]\subseteq\Aut(\cD).
    \]

\begin{theorem}
\label{theorem:standardautoequivalence}
We have an agreement of upper and lower shifting numbers $\tau(F)=\tau^\pm(F)\in\bR$ for any $F\in\Aut_\std(\cD)$.
Moreover, the map $\tau\colon\Aut_\std(\cD)\ra\bR$ given by the shifting number is a group homomorphism and can be factored as
    \[
    \Aut_\std(\cD)\xrightarrow{\pi}\bZ\xrightarrow{\iota}\bR,
    \]
where $\pi$ is the projection to the $\bZ[1]$-factor, and $\iota$ is the natural inclusion of integers into real numbers.
\end{theorem}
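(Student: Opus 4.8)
The plan is to reduce everything to the vanishing criterion of \autoref{lem:preserveheartthenzero} together with the shift-compatibility from \autoref{proposition:basicproperties}\ref{proposition:compatiblewithshifts}. Write an arbitrary $F\in\Aut_\std(\cD)$ as $F=\Phi\circ[k]$ with $k\in\bZ$ and $\Phi\in\Aut(X)\ltimes\Pic(X)$; concretely $\Phi=f_*\circ(\mathord{-}\otimes L)$ for some $f\in\Aut(X)$ and $L\in\Pic(X)$. The first step is the observation that $\Phi$ preserves the standard bounded $t$-structure on $\cD^b\Coh(X)$, the one with heart $\Coh(X)$: tensoring by the line bundle $L$ and pushing forward along the isomorphism $f$ are both exact functors on $\Coh(X)$, so $\Phi$ is $t$-exact. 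Moreover this $t$-structure has finite cohomological dimension: since $X$ is smooth projective of dimension $d\coloneqq\dim X$, Serre duality gives an isomorphism $\Hom(E,F[j])\cong\Hom(F,E\otimes\omega_X[d-j])^{\vee}$, which vanishes for coherent sheaves $E,F$ and $j>d$ (the case $j<0$ being immediate), so the cohomological dimension is at most $d$.

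The second step feeds this into \autoref{lem:preserveheartthenzero} applied to the endofunctor $\Phi$. Fix any split generator $G$ of $\cD=\cD^b\Coh(X)$ and let $[a,b]$ be the finite range of degrees in which the cohomology sheaves of $G$ (for the standard $t$-structure) are nonzero; the range is finite because $\cD$ consists of bounded complexes. Since $\Phi$ is $t$-exact, the cohomology sheaves of $\Phi^nG$ lie in the same range $[a,b]$ for every $n\geq0$. Filtering $G$ and $\Phi^nG$ by their canonical truncations and chasing the resulting long exact sequences, one sees that $\Hom(G,\Phi^nG[j])$ is assembled from subquotients of the groups $\Hom(H^i(G),H^{i'}(\Phi^nG)[j+i-i'])$ with $i,i'\in[a,b]$, and each such group vanishes unless $0\leq j+i-i'\leq d$, hence unless $a-b\leq j\leq b-a+d$. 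Taking $M\coloneqq b-a+d+1$ we get $\Hom(G,\Phi^nG[j])=0$ for all $|j|\geq M$ and all $n\geq0$, so \autoref{lem:preserveheartthenzero} yields $\tau^+(\Phi)=\tau^-(\Phi)=0$; equivalently, this is precisely the parenthetical case already recorded in the statement of \autoref{lem:preserveheartthenzero}.

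The final step combines these. By \autoref{proposition:basicproperties}\ref{proposition:compatiblewithshifts}, $\tau^\pm(F)=\tau^\pm(\Phi\circ[k])=\tau^\pm(\Phi)+k=k$, so $\tau^+(F)=\tau^-(F)=k=\tau(F)\in\bR$. By the very definition of $\Aut_\std(\cD)$ as the direct product $(\Aut(X)\ltimes\Pic(X))\times\bZ[1]$, this integer $k$ is exactly $\pi(F)$, the image of $F$ under the projection onto the $\bZ[1]$-factor; hence $\tau=\iota\circ\pi$ is a group homomorphism, being the composite of the homomorphisms $\pi$ and $\iota$. I do not anticipate a genuine obstacle here; the only point that needs any care is the routine truncation bookkeeping in the second step, showing that ``$t$-exact plus finite cohomological dimension'' really does furnish the split generator and the integer $M$ demanded by \autoref{lem:preserveheartthenzero}.
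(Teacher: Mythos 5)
Your proof is correct and follows essentially the same route as the paper: factor $F$ as a $t$-exact autoequivalence composed with a shift, apply \autoref{lem:preserveheartthenzero} to the $t$-exact part, and then use \autoref{proposition:basicproperties}\ref{proposition:compatiblewithshifts}. The paper simply invokes the parenthetical in \autoref{lem:preserveheartthenzero}; you additionally spell out the Serre-duality bound and the truncation bookkeeping that justify it, which is a harmless elaboration.
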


\begin{proof}
Let $F=\bL f^*(-\otimes L)[n]\in\Aut_\std(\cD)$, where $f\in\Aut(X)$, $L\in\Pic(X)$, and $n\in\bZ$.
Since $F\circ[-n]=\bL f^*(-\otimes L)$ preserves the standard t-structure on $\cD$, we have $\tau^\pm(F\circ[-n])=0$ by \autoref{lem:preserveheartthenzero}. Hence $\tau^\pm(F)=n$ by \autoref{proposition:basicproperties}\ref{proposition:compatiblewithshifts}.
\end{proof}

\begin{corollary}
\label{corollary:standardautoequivalence}
Let $X$ be a smooth projective variety such that $K_X$ is ample or anti-ample. Then the shifting numbers give a homomorphism
    \[
    \tau\colon\Aut(\cD^b\Coh(X))\ra\bR.
    \]
\end{corollary}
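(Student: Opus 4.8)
The plan is to deduce this immediately from \autoref{theorem:standardautoequivalence} once we know that, under the hypothesis on $K_X$, there are no ``exotic'' autoequivalences, i.e.\ that $\Aut(\cD^b\Coh(X)) = \Aut_\std(\cD^b\Coh(X))$. This is precisely the content of the Bondal--Orlov reconstruction theorem: if $X$ is a smooth projective variety whose canonical bundle $K_X$ is ample or anti-ample, then every $\tbk$-linear exact autoequivalence of $\cD^b\Coh(X)$ is isomorphic to a composition $\bL f^*(-\otimes L)[n]$ for some $f\in\Aut(X)$, $L\in\Pic(X)$, $n\in\bZ$; equivalently $\Aut(\cD^b\Coh(X)) = (\Aut(X)\ltimes\Pic(X))\times\bZ[1]$, the $\bZ[1]$ factor being central since the shift commutes with every exact functor.

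Granting this identification, \autoref{theorem:standardautoequivalence} applies verbatim: for any $F\in\Aut(\cD^b\Coh(X))$ we obtain $\tau^+(F)=\tau^-(F)=\tau(F)\in\bR$, and $\tau$ coincides with the composite $\Aut(\cD^b\Coh(X))\xrightarrow{\pi}\bZ\xrightarrow{\iota}\bR$, where $\pi$ records the power of the shift $[1]$ appearing in the standard form of $F$. Since the $\bZ[1]$ factor splits off as a genuine direct factor, $\pi$ is a group homomorphism, and $\iota$ is the inclusion $\bZ\hookrightarrow\bR$; hence $\tau\colon\Aut(\cD^b\Coh(X))\to\bR$ is a group homomorphism, which is the assertion.

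There is essentially no obstacle here beyond correctly invoking Bondal--Orlov; the only points that deserve care are bookkeeping, namely checking that the conventions of this paper (triangulated categories and functors that are $\tbk$-linear, triangulated, saturated, and not virtually zero — the last being automatic for autoequivalences) match the hypotheses under which the reconstruction theorem is stated, and recalling that $\cD^b\Coh(X)$ for $X$ smooth projective is indeed saturated and carries a Serre functor $-\otimes K_X[\dim X]$, so that all hypotheses invoked in \autoref{theorem:standardautoequivalence} and the results it relies on are in force. Once these matters are dispatched the corollary is immediate.
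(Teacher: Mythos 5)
Your proof is correct and follows exactly the same route as the paper: invoke Bondal--Orlov to identify $\Aut(\cD^b\Coh(X))$ with $\Aut_\std(\cD^b\Coh(X))$ under the (anti-)ampleness hypothesis on $K_X$, then apply \autoref{theorem:standardautoequivalence}. The extra remarks about saturation and the Serre functor are sound bookkeeping but not a departure from the paper's argument.
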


\begin{proof}
By \cite[Theorem~3.1]{BondalOrlov2001_Reconstruction-of-a-variety-from-the-derived-category-and-groups-of-autoequivalences} we have the equality $\Aut(\cD^b\Coh(X))=\Aut_\std(\cD^b\Coh(X))$ if $K_X$ is ample or anti-ample.
The result then follows from \autoref{theorem:standardautoequivalence}.
\end{proof}



\subsection{Spherical twists and \texorpdfstring{$\bP$}{bP}-twists}
	\label{subsec:sphericaltwistsPtwists}

\subsubsection{Setup}
	\label{sssec:setup_spherical_twists_and_bp_twists}
Let $\cD$ be a triangulated category which admits a Serre functor $\bS$ and let $N$ be a positive integer.
Recall that an object $E$ in $\cD$ is called \emph{$N$-spherical} if $\bS(E)\cong E[N]$ and $\Hom^\bullet(E,E)\cong\tbk\oplus\tbk[-N]$.
Examples of spherical objects include line bundles in the bounded derived categories of $N$-dimensional Calabi--Yau manifolds, and Lagrangian spheres in certain derived Fukaya categories \cite{SeidelThomas_Braid-group-actions-on-derived-categories-of-coherent}.
Seidel and Thomas \cite{SeidelThomas_Braid-group-actions-on-derived-categories-of-coherent} introduce an autoequivalence of $\cD$ associated to each spherical object $E$, which is called the \emph{spherical twist} $T_E$.
It is defined by
    \[
    T_E(F)\coloneqq\Cone(\Hom^\bullet(E,F)\otimes E\xrightarrow{\mathrm{ev}} F),
    \]
and is the categorical analogue of Dehn twists along Lagrangian spheres.

\begin{theorem}[Spherical twist calculation]
    \label{thm:spherical_twist_calculation}
	Let $E$ be an $N$-spherical object in $\cD$ and suppose that $E^\perp\coloneqq\{F\in\cD\colon\Hom^\bullet(E,F)=0\}\neq\emptyset$.
	Then
	    \[
	    \tau^+(T_E)=0 \text{ \ and \ } \tau^-(T_E)=1-N.
	    \]
	Hence $\tau(T_E)=\frac{1-N}{2}$.
\end{theorem}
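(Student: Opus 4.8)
\emph{Proof plan.} The guiding principle is that $T_E$ fixes every object of $E^\perp$ and moves everything else only in the direction of $E$, with the amount of motion quantized by the spherical relation $T_E(E)\cong E[1-N]$. Since $\tau^\pm$ is independent of the choice of split generator (\autoref{theorem:shiftingnumberviaentropy}), I would take any split generator $G_0$, fix a nonzero object $F_0\in E^\perp$ (furnished by the hypothesis), and work with $G\coloneqq G_0\oplus E\oplus F_0$, which is again a split generator because $G_0$ is a direct summand of it and the complexity function is submultiplicative. Two elementary facts pin down the orbit: using $\Hom^\bullet(E,E)\cong\tbk\oplus\tbk[-N]$, the evaluation $\Hom^\bullet(E,E)\otimes E\cong E\oplus E[-N]\to E$ restricts to $\mathrm{id}_E$ on the first summand, so $T_E(E)\cong E[1-N]$; and $T_E(F_0)\cong F_0$ since $\Hom^\bullet(E,F_0)=0$. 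Hence $T_E^nG\cong T_E^nG_0\oplus E[n(1-N)]\oplus F_0$.

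The heart of the matter is to control $T_E^nG_0$. Applying $\Hom^\bullet(E,-)$ to the defining triangle $\Hom^\bullet(E,W)\otimes E\to W\to T_EW\xrightarrow{+1}$ and using $\Hom^\bullet(E,E)\cong\tbk\oplus\tbk[-N]$ once more, the first arrow becomes a surjection of graded vector spaces with kernel $\cong\Hom^\bullet(E,W)[-N]$, so $\Hom^\bullet(E,T_EW)\cong\Hom^\bullet(E,W)[1-N]$ and, iterating, $\Hom^\bullet(E,T_E^nW)\cong\Hom^\bullet(E,W)[n(1-N)]$. Substituting $W=T_E^{n-1}G_0$ back into the defining triangle gives an exact triangle
\[
\big(\Hom^\bullet(E,G_0)\otimes E\big)\big[(n-1)(1-N)\big]\ \to\ T_E^{n-1}G_0\ \to\ T_E^nG_0\ \xrightarrow{+1},
\]
and applying $\Hom(A,-)$ to it, for an arbitrary object $A$, together with the identity $\ep^\pm(A,B[j])=\ep^\pm(A,B)+j$, yields
\[
\ep^+(A,T_E^nG_0)\le\max\!\big(\ep^+(A,T_E^{n-1}G_0),\ \ep^+(A,E)+\ep^+(E,G_0)+(n-1)(1-N)+1\big),
\]
\[
\ep^-(A,T_E^nG_0)\ge\min\!\big(\ep^-(A,T_E^{n-1}G_0),\ \ep^-(A,E)+\ep^-(E,G_0)+(n-1)(1-N)+1\big)
\]
(dropping a term whenever the relevant $\Hom^\bullet$-space vanishes). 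Since $N\ge2$ makes $(n-1)(1-N)\le0$, iterating from $n=0$ shows that $\ep^+(A,T_E^nG_0)$ is bounded above by a constant independent of $n$, while $\ep^-(A,T_E^nG_0)\ge(1-N)n-C_A$ for a constant $C_A$; the case $N=1$ is identical, with $(n-1)(1-N)=0$ rendering both bounded.

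It remains to compute $\tau^\pm(T_E)=\lim_n\ep^\pm(G,T_E^nG)/n$ using the elementary additivity $\ep^+(\bigoplus_iA_i,\bigoplus_jB_j)=\max_{i,j}\ep^+(A_i,B_j)$ and its $\min$-analogue for $\ep^-$ (extrema over pairs with $\Hom^\bullet(A_i,B_j)\neq0$), applied to the decompositions of $G$ and $T_E^nG$ above. For $\ep^+$: every pair with $E[n(1-N)]$ on the right contributes $\ep^+(A,E)+n(1-N)\to-\infty$ for $A\in\{G_0,E,F_0\}$; every pair with $T_E^nG_0$ on the right is bounded above by the previous step; and the pair $(F_0,F_0)=(F_0,T_E^nF_0)$ contributes $\ep^+(F_0,F_0)\ge0$. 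Hence $0\le\ep^+(G,T_E^nG)\le\mathrm{const}$ for large $n$, so $\tau^+(T_E)=0$. For $\ep^-$: the pair $(E,T_E^nE)=(E,E[n(1-N)])$ contributes exactly $\ep^-(E,E)+n(1-N)=n(1-N)-N$, so $\ep^-(G,T_E^nG)\le n(1-N)-N$; and every other pair is either constant or at least $(1-N)n-C$ (using the recursive lower bound for the pairs with $T_E^nG_0$ on the right, $\Hom^\bullet(E,T_E^nG_0)\cong\Hom^\bullet(E,G_0)[n(1-N)]$ for the pair $(E,T_E^nG_0)$, and the shift identity otherwise). Dividing by $n$ and letting $n\to\infty$ gives $\tau^-(T_E)=1-N$, and hence $\tau(T_E)=\tfrac12(\tau^++\tau^-)=\tfrac{1-N}{2}$.

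The main obstacle is the bookkeeping of the last paragraph: one has to rule out any off-diagonal pair of summands outgrowing the diagonal pair $(E,E[n(1-N)])$, and the reason this succeeds is precisely the sign of $1-N$ for $N\ge2$, which pushes the copies of $E$ that $T_E$ keeps splicing in off to $-\infty$ in homological degree, so that asymptotically $F_0$ alone governs $\ep^+$ and the $E[n(1-N)]$-summand alone governs $\ep^-$. This also explains why a nonzero object of $E^\perp$ is needed: without it the bound $\ep^+(G,T_E^nG)\ge0$ fails and $\tau^+$ can be nonzero. (If $\cD$ additionally carries a Bridgeland stability condition, one can shortcut the analysis via $\phi_\sigma^+(T_E^nG)=\max(\phi_\sigma^+(E)+n(1-N),\phi_\sigma^+(F_0))$, and similarly with $\min$ for $\phi_\sigma^-$, invoking \autoref{theorem:computeviastability}; the argument above avoids assuming a stability condition exists.)
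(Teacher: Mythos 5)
Your proof is correct, but it takes a genuinely different route from the paper. The paper's proof is a one-liner: it cites Ouchi's Theorem 3.1 (\cite{Ouchi2020_On-entropy-of-spherical-twists}), which computes the full categorical entropy function $h_t(T_E)=(1-N)t$ for $t\le0$ and $h_t(T_E)=0$ for $t\ge0$, and then reads off $\tau^\pm(T_E)=\lim_{t\to\pm\infty}h_t(T_E)/t$ via \autoref{theorem:shiftingnumberviaentropy}. You instead compute the asymptotics of $\ep^\pm(G,T_E^nG)$ directly, using the generator $G_0\oplus E\oplus F_0$, the two basic facts $T_E(E)\cong E[1-N]$ and $T_E(F_0)\cong F_0$, and the recursion obtained by applying $\Hom(A,-)$ to the defining triangle of $T_E$ together with the key identity $\Hom^\bullet(E,T_E^nW)\cong\Hom^\bullet(E,W)[n(1-N)]$. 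This is in effect a self-contained rederivation of the relevant part of Ouchi's computation: you don't get the full function $h_t(T_E)$, only its slopes at $\pm\infty$, but that is exactly what the theorem needs. The trade-off is clear: the paper's argument is shorter and leverages a known result; yours is longer but free of the citation and has the merit of making explicit where the hypothesis $E^\perp\neq\emptyset$ enters (to supply the stationary summand $F_0$, which pins the bound $\ep^+(G,T_E^nG)\ge0$ and hence $\tau^+(T_E)=0$), and of confirming the counterexample you sketch for the case $E^\perp$ trivial. Minor remarks: your identity $\Hom^\bullet(E,T_EW)\cong\Hom^\bullet(E,W)[1-N]$ uses that the composition $\Hom^\bullet(E,W)\otimes\Hom^\bullet(E,E)\to\Hom^\bullet(E,W)$ is surjective with kernel the degree-$N$ part, which is correct since the degree-$0$ part of $\Hom^\bullet(E,E)$ is spanned by the identity; and the cone computation $T_E(E)\cong E[1-N]$ works because the component $\mathrm{id}_E$ can be split off by a triangular automorphism of $E\oplus E[-N]$.
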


\begin{proof}
This follows directly from \autoref{theorem:shiftingnumberviaentropy} and \cite[Theorem~3.1]{Ouchi2020_On-entropy-of-spherical-twists} which states that
    \[
    h_t(T_E)=
        \begin{cases}
        (1-N)t, & \text{if }t\leq0;\\
        0, & \text{if }t\geq0.
        \end{cases}
    \]
\end{proof}

\begin{remark}
In general, it is difficult to compute the categorical entropy function, and therefore the shifting numbers, of a composition of several spherical twists $T_{E_1},\ldots,T_{E_n}$ with non-trivial couplings (i.e.~$\Hom^\bullet(E_i,E_j)\neq0$).
We work out the details in \autoref{sec:CYA2} of the case of autoequivalences of Calabi--Yau category associated to the $A_2$ quiver.
This is the simplest example which contains non-trivial couplings of spherical twists.
\end{remark}

\subsubsection{\texorpdfstring{$\bP$}{bP}-twists}
	\label{sssec:bp_twists}
Huybrechts and Thomas \cite{HuybrechtsThomas2006_Bbb-P-objects-and-autoequivalences-of-derived-categories} introduced the categorical analogue of a Dehn twist along a Lagrangian complex projective plane.
Recall that an object $E$ is called a \emph{$\bP^N$-object} if $\bS(E)\cong E[2N]$ and $\Hom^\bullet(E,E)\cong H^*(\bP^N,\bZ)\otimes\tbk$ as $\tbk$-algebras.
Examples of $\bP^N$-objects include line bundles and the structure sheaf of an embedded $\bP^N$ in the bounded derived categories of a $2N$-dimensional holomorphic symplectic manifold.
One can also define an autoequivalence, called the $\bP$-twist $P_E$, associated to an $\bP^N$-object $E$.
We refer to \cite{HuybrechtsThomas2006_Bbb-P-objects-and-autoequivalences-of-derived-categories} for the precise definition.

\begin{proposition}
	Let $E$ be a $\bP^N$ object in $\cD$ and suppose that $E^\perp\coloneqq\{F\in\cD\colon\Hom^\bullet(E,F)=0\}\neq\emptyset$. Then
	    \[
	    \tau^+(P_E)=0 \text{ \ and \ } \tau^-(P_E)=-2N.
	    \]
	Hence $\tau(P_E)=-N$.
\end{proposition}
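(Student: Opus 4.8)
The plan is to follow the strategy of the spherical twist computation in \autoref{thm:spherical_twist_calculation}. By \autoref{theorem:shiftingnumberviaentropy} we have $\tau^\pm(P_E)=\lim_{n\to\infty}\ep^\pm(G,P_E^n G)/n$ for \emph{any} split generator $G$, so it is enough to estimate the Ext-distances for a convenient choice of $G$. Using the hypothesis $E^\perp\neq\emptyset$, fix $E'\in E^\perp$ and set $G\coloneqq G_0\oplus E\oplus E'$, where $G_0$ is any split generator of $\cD$; this is again a split generator. It then suffices to bound $\ep^+(G,P_E^n G)$ and $\ep^-(G,P_E^n G)$ from both sides.

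The structural input I would use is the identity
\[
    \Hom^\bullet(E,P_E F)\cong\Hom^\bullet(E,F)[-2N]\qquad\text{for every }F\in\cD,
\]
the analogue of the corresponding fact for spherical twists. In particular $P_E$ fixes every object of $E^\perp$ and $P_E E\cong E[-2N]$, both of which are already in \cite{HuybrechtsThomas2006_Bbb-P-objects-and-autoequivalences-of-derived-categories}, and iterating gives $\Hom^\bullet(E,P_E^n F)\cong\Hom^\bullet(E,F)[-2Nn]$. I expect proving this identity to be the main obstacle: applying $\Hom^\bullet(E,-)$ to the defining (double) cone of $P_E F$ turns the inner cone into the complex $\big[\,\Hom^\bullet(E,F)\otimes_\tbk H^*(\bP^N)\to\Hom^\bullet(E,F)\otimes_\tbk H^*(\bP^N)\,\big]$ given by multiplication by $h\otimes 1-1\otimes h$, viewing the target as a module over $R\coloneqq H^*(\bP^N)\otimes_\tbk H^*(\bP^N)$; its cokernel is identified via the evaluation map with $\Hom^\bullet(E,F)$ and cancels against the outer cone, while its kernel is the $R$-annihilator of $h\otimes 1-1\otimes h$, which (after decomposing $\Hom^\bullet(E,F)$ into cyclic $H^*(\bP^N)$-modules) is a shifted copy of $\Hom^\bullet(E,F)$ with total shift $[-2N]$, the shift being pinned down by the case $F=E$. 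Granting this, unwinding the nested cones defining $P_E$ shows that $P_E^n G$ lies in the triangulated subcategory generated by $G$ together with $O(n)$ shifted copies $E[s]$ all of whose shifts $s$ lie in a window $[-2Nn-C,C]$ for a constant $C=C(E,G)$, because each application of $P_E$ only introduces copies of $E$ shifted by the negatives of the degrees supporting $\Hom^\bullet(E,P_E^{\ell}G)$, and these degrees drift upward at rate exactly $2N$ by the identity above.

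The estimates are then straightforward. From the long exact sequences, $\Hom(G,P_E^n G[k])$ is built out of $\Hom^\bullet(G,G)$ (supported in a fixed finite range of degrees) and the groups $\Hom^\bullet(G,E)[s]$ with $s\in[-2Nn-C,C]$, so it vanishes unless $k\in[-C',2Nn+C']$ for a constant $C'$; hence $\ep^+(G,P_E^n G)\leq C'$ is bounded and $\ep^-(G,P_E^n G)\geq-2Nn-C'$. For the matching lower bounds, note that $E'$ is a direct summand of both $G$ and $P_E^n G$ with $P_E^n E'\cong E'$, so the identity of $E'$ gives a nonzero class in $\Hom(G,P_E^n G)$ in degree $0$, whence $\ep^+(G,P_E^n G)\geq0$; and $E$ is a summand with $P_E^n E\cong E[-2Nn]$, so $\Hom^\bullet(E,E[-2Nn])\cong H^*(\bP^N)[-2Nn]$ is a direct summand of $\Hom^\bullet(G,P_E^n G)$ whose top nonzero degree is $2N(n+1)$, whence $\ep^-(G,P_E^n G)\leq-2N(n+1)$. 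Combining, $0\leq\ep^+(G,P_E^n G)\leq C'$ and $-2Nn-C'\leq\ep^-(G,P_E^n G)\leq-2N(n+1)$; dividing by $n$ and letting $n\to\infty$ gives $\tau^+(P_E)=0$ and $\tau^-(P_E)=-2N$, and therefore $\tau(P_E)=\tfrac12\big(\tau^+(P_E)+\tau^-(P_E)\big)=-N$. Equivalently, the same input can be packaged as $h_t(P_E)=\max\{0,-2Nt\}$ and fed into \autoref{theorem:shiftingnumberviaentropy}, just as \cite{Ouchi2020_On-entropy-of-spherical-twists} does for $T_E$.
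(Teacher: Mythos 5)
Your argument is correct but takes a genuinely different route from the paper. The paper dispatches this proposition in two lines by quoting $h_t(P_E)=\max\{0,-2Nt\}$ from \cite{2018arXiv180110485F} (note the displayed formula in the paper has a typo, writing $h_t(T_E)$ where $h_t(P_E)$ is meant) and reading off $\tau^\pm(P_E)$ from \autoref{theorem:shiftingnumberviaentropy}, exactly as it did for the spherical twist via Ouchi's result. You instead estimate $\ep^\pm(G,P_E^nG)$ directly, which forces you to establish the structural identity $\Hom^\bullet(E,P_EF)\cong\Hom^\bullet(E,F)[-2N]$ of graded $\tbk$-vector spaces. Your sketch of that identity is sound: with $R=\Hom^\bullet(E,E)\cong\tbk[h]/(h^{N+1})$, $\deg h=2$, and $M=\Hom^\bullet(E,F)$, the cokernel of $h\otimes1-1\otimes h$ on $M\otimes_\tbk R$ is $M$ via the multiplication map and cancels against the outer cone of the $\bP$-twist, while the kernel, computed after decomposing the graded $R$-module $M$ into shifted cyclics $R/(h^j)$, is the antidiagonal submodule concentrated exactly $2N$ degrees above $M$, hence isomorphic to $M[-2N]$. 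From there the Postnikov filtration of $P_E^nG$ by $G$ and $O(n)$ shifts $E[s]$ with $s\in[-2Nn-C,C]$ gives $\ep^+(G,P_E^nG)\leq C'$ and $\ep^-(G,P_E^nG)\geq-2Nn-C'$, while the distinguished summands $E'\in E^\perp$ and $E$ of your chosen generator give the complementary bounds $\ep^+(G,P_E^nG)\geq0$ and $\ep^-(G,P_E^nG)\leq-2N(n+1)$; dividing by $n$ pins down $\tau^+(P_E)=0$ and $\tau^-(P_E)=-2N$. What the paper's proof buys is brevity, at the cost of leaning entirely on the cited entropy computation; what yours buys is self-containedness --- it is essentially a re-derivation of the cited theorem, as you yourself note at the end, and it makes the drift rate $2N$ visible directly at the level of $\Ext$-groups rather than only in the $t\to\pm\infty$ asymptotics of $h_t$.
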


\begin{proof}
This follows directly from \autoref{theorem:shiftingnumberviaentropy} and \cite[Theorem~3.1]{2018arXiv180110485F} which states that
    \[
    h_t(T_E)=
        \begin{cases}
        -2Nt, & \text{if }t\leq0;\\
        0, & \text{if }t\geq0.
        \end{cases}
    \]
\end{proof}



\subsection{Pseudo-Anosov autoequivalences}
	\label{subsec:pseudoAnosov}
The notion of \emph{pseudo-Anosov autoequivalences} of triangulated categories was introduced in \cite{DimitrovHaidenKatzarkov_Dynamical-systems-and-categories} as a categorical generalization of pseudo-Anosov maps on Riemann surfaces. 
To formulate the definition, recall that the space of Bridgeland stability conditions $\Stab(\cD)$ carries natural group actions, by $\Aut(\cD)$ on the left and by $\glstab$ on the right, see \cite[Lemma~8.2]{Bridgeland2007_Stability-conditions-on-triangulated-categories}.

To define the action, for an autoequivalence $F\in\Aut(\cD)$ set
    \[
    \sigma=(Z,P)\mapsto F\cdot\sigma\coloneqq(Z\circ F^{-1},P'),
    \]
where $P'(\phi)\coloneqq F(P(\phi))$.
To define the action of $\glstab$ on $\Stab(\cD)$, recall that $\glstab$ is isomorphic to the group of pairs $(T,f)$, where $T\in\GL^+_2(\bR)$ and $f\colon\bR\ra\bR$ is an increasing map with $f(\phi+1)=f(\phi)+1$, such that their induced maps on $S^1\cong(\bR^2\backslash\{(0,0)\})/\bR_{>0}\cong\bR/2\bZ$ coincide.
For $g=(T,f)\in\glstab$ define
    \[
    \sigma=(Z,P)\mapsto\sigma\cdot g\coloneqq(T^{-1}\circ Z,P''),
    \]
where $P''(\phi)\coloneqq P(f(\phi))$.
It can be checked that the actions of $\Aut(\cD)$ and $\glstab$ commute.

\subsubsection{Rotation number for $\glstab$}
    \label{sssec:rotation_number_for_glstab}
We have the following diagram:
\begin{equation*}
    \label{eqn_cd:gltilde_homeoR}
\begin{tikzcd}
    0
    \arrow[r]
    &
    \bZ
    \arrow[r, hook, ""]
    \arrow[d, equal, ""]    
    &
    \glstab
    \arrow[d, hook, ""]
    \arrow[r, twoheadrightarrow, ""]
    &\GL_2^+(\bR)
    \arrow[d, hook, ""]
    \arrow[r]
    &
    \id
    \\
    0
    \arrow[r]
    &
    \bZ
    \arrow[r, hook, ""]
    \arrow[d, equal, ""]
    & 
    \Homeo^+_{\bZ}(\bR)
    \arrow[r, twoheadrightarrow, ""]
    \arrow[d, "\wtilde{\rho}"]
    &
    \Homeo^+(\bR/\bZ)
    \arrow[r]
    \arrow[d, "\rho"]
    &
    \id\\
    &
    \bZ
    \arrow[r]
    &
    \bR
    \arrow[r]
    &
    \bR/\bZ
\end{tikzcd}
\end{equation*}
The first two rows are exact sequences (and central extensions) and the maps from the middle to the last row are given by the \Poincare translation and rotation numbers respectively.
This observation provides a connection between the Poincar\'e translation number and the shifting numbers of autoequivalences in certain examples, see \autoref{proposition:DHKKrotationnumber} and \autoref{sec:ellcurve}.

Now we recall the definition of pseudo-Anosov autoequivalences from \cite{DimitrovHaidenKatzarkov_Dynamical-systems-and-categories}.

\begin{definition}[{\cite[Definition~4.1]{DimitrovHaidenKatzarkov_Dynamical-systems-and-categories}}]
\label{definition:DHKKpseudoAnosov}
An autoequivalence $F\in\Aut(\cD)$ is said to be \emph{pseudo-Anosov} if there exists a Bridgeland stability condition $\sigma\in\Stab(\cD)$ and an element $g=(T,f)\in\glstab$ such that
    \begin{enumerate}
        \item $F\cdot\sigma=\sigma\cdot g$,
        \item $T=\begin{pmatrix}r&0\\0&r^{-1}\end{pmatrix}\text{ or }\begin{pmatrix}r^{-1}&0\\0&r\end{pmatrix}$ for some $\lambda\coloneqq|r|>1$.
    \end{enumerate}
\end{definition}

\begin{remark}[On pseudo-Anosov autoequivalences]
	\label{remark:on_pseudo_anosov_autoequivalences}
	\leavevmode
	\begin{enumerate}
		\item One obtains an equivalent definition if only requiring $T$ to be \emph{conjugate} to a diagonal matrix, instead of being equal.
		Indeed, if $F\cdot \sigma = \sigma \cdot cgc^{-1}$ then $F\cdot (\sigma \cdot c) = (\sigma \cdot c)\cdot g $.
		\item The stability condition $\sigma$ is analogous to a pair of measured foliations on a Riemann surface, and condition (i) and (ii) describe the expansion/contraction of the foliations by a pseudo-Anosov map.
		\item Examples of pseudo-Anosov autoequivalences include the Serre functor of the derived category of representations of Kronecker quiver with at least three arrows \cite[\S4.2]{DimitrovHaidenKatzarkov_Dynamical-systems-and-categories}, and autoequivalences on the bounded derived category of an elliptic curve such that their induced actions on the numerical Grothendieck group are hyperbolic \cite[Proposition~4.14]{KikutaCurvature}.
		Note that there is a more general definition of pseudo-Anosov autoequivalences introduced in \cite{FFHKL}, where examples of pseudo-Anosov autoequivalences of certain $3$-Calabi--Yau categories (with respect to the more general definition) are provided.
	\end{enumerate}
\end{remark}

\begin{proposition}
\label{proposition:DHKKrotationnumber}
Let $\cD$ be a triangulated category with a split generator $G$, and let $F$ be a pseudo-Anosov autoequivalence of $\cD$ in the sense of \autoref{definition:DHKKpseudoAnosov}.
Then the upper and lower shifting numbers agree and satisfy
    \[
    \tau(F)=\tau^\pm(F)=\wtilde{\rho}(f)=f(0)\in\bZ,
    \]
where $\wtilde{\rho}(f)$ is the Poincar\'e translation number of $f\in\Homeo^+_\bZ(\bR)$ defined in the introduction.
\end{proposition}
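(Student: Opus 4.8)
The plan is to use \autoref{theorem:computeviastability}, which reduces the computation of $\tau^\pm(F)$ to the asymptotics of the phases $\phi^\pm_\sigma(F^nG)$, and then exploit the pseudo-Anosov relation $F\cdot\sigma=\sigma\cdot g$ to make these asymptotics completely explicit. First I would record the basic compatibility: for any autoequivalence $F$ and any stability condition $\sigma$, the phase functions transform by $\phi^\pm_{F\cdot\sigma}(FE)=\phi^\pm_\sigma(E)$, since $F$ carries the Harder--Narasimhan filtration of $E$ with respect to $\sigma$ to that of $FE$ with respect to $F\cdot\sigma$, preserving phases. On the other hand, the right action of $g=(T,f)\in\glstab$ satisfies $\phi^\pm_{\sigma\cdot g}(E)=f^{-1}(\phi^\pm_\sigma(E))$, because $P''(\phi)=P(f(\phi))$ means a $\sigma$-semistable object of phase $\psi$ becomes $\sigma\cdot g$-semistable of phase $f^{-1}(\psi)$. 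Combining these with $F\cdot\sigma=\sigma\cdot g$ yields the key recursion
\[
\phi^\pm_\sigma(FE)=\phi^\pm_{F\cdot\sigma}(FE)\circ(\text{correction})=f\bigl(\phi^\pm_\sigma(E)\bigr),
\]
more precisely $\phi^\pm_\sigma(FE)=f(\phi^\pm_\sigma(E))$; one has to be a little careful about which direction the correction goes, but this is routine bookkeeping with the two actions.

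Granting the recursion $\phi^\pm_\sigma(F^{n}E)=f^{n}(\phi^\pm_\sigma(E))$ for all $n\geq 0$, I would apply it with $E=G$ the split generator and invoke \autoref{theorem:computeviastability}:
\[
\tau^\pm(F)=\lim_{n\to\infty}\frac{\phi^\pm_\sigma(F^nG)}{n}=\lim_{n\to\infty}\frac{f^{n}(\phi^\pm_\sigma(G))}{n}=\wtilde\rho(f),
\]
where the last equality is the definition of the Poincar\'e translation number of $f\in\Homeo^+_\bZ(\bR)$ (the limit being independent of basepoint). In particular $\tau^+(F)=\tau^-(F)$, hence also $\tau(F)$ equals this common value. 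It remains to identify $\wtilde\rho(f)$ with the integer $f(0)$. Here condition (ii) of \autoref{definition:DHKKpseudoAnosov} enters: since $T$ is diagonal with entries $r,r^{-1}$ (or the transpose), the induced map on $S^1\cong\bR/2\bZ$ fixes the two points corresponding to the eigendirections, so the lift $f$ of that circle map fixes the corresponding two points of $\bR/\bZ$; in particular $f$ has a fixed point modulo $\bZ$, which forces $\wtilde\rho(f)\in\bZ$ and $\wtilde\rho(f)=f(\phi_0)-\phi_0$ for any such fixed point $\phi_0$. Normalizing (using that $f$ is only determined up to the $\bZ$-ambiguity in $\glstab$, or simply observing the eigendirection can be taken at angle $0$) gives $\wtilde\rho(f)=f(0)\in\bZ$.

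The main obstacle I anticipate is the careful verification of the phase transformation laws under the two commuting actions, in particular getting the inverse/composition order exactly right so that the recursion reads $\phi^\pm_\sigma(F^nG)=f^n(\phi^\pm_\sigma(G))$ rather than $f^{-n}$ or a shifted variant --- a sign error here would flip $\tau(F)$. This is where one must be scrupulous about the conventions in \autoref{subsec:pseudoAnosov}: that $F\cdot\sigma=(Z\circ F^{-1},P')$ with $P'(\phi)=F(P(\phi))$, and that $\sigma\cdot g=(T^{-1}\circ Z,P'')$ with $P''(\phi)=P(f(\phi))$. A secondary, minor point is to confirm that the diagonal form of $T$ genuinely produces a fixed point of the circle map and hence pins down $\wtilde\rho(f)$ to an integer; this is the categorical shadow of the fact that a (lifted) pseudo-Anosov mapping class acts on the circle at infinity with fixed points, so its translation number is an integer. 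Everything else is a direct appeal to \autoref{theorem:computeviastability} and the standard properties of $\wtilde\rho$ recalled in the introduction.
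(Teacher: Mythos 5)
Your proposal is essentially identical to the paper's proof. The paper also derives the recursion $\phi_\sigma^\pm(F^kG)=f^{(k)}(\phi_\sigma^\pm(G))$ directly from $F(P_\sigma(\phi))=P_\sigma(f(\phi))$ applied to the Harder--Narasimhan factors of $G$, then invokes \autoref{theorem:computeviastability}, and concludes $f(0)\in\bZ$ from the fact that the diagonal matrix $T$ preserves the coordinate axes, which under $S^1\cong\bR/2\bZ$ correspond to integer points. One small tightening of your last paragraph: there is no normalization or ``$\bZ$-ambiguity'' to invoke --- because $T$ is required to be literally diagonal (not merely conjugate to diagonal), the eigendirections are already the coordinate axes, so $0$ itself is a fixed point of the circle map, giving $f(0)\in\bZ$ directly, and then $f^{(k)}(0)=kf(0)$ (since $f$ commutes with integer translation) pins $\wtilde\rho(f)=f(0)$ exactly rather than up to conjugacy.
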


\begin{proof}
Since $F\cdot\sigma=\sigma\cdot g$, we have $F(P_\sigma(\phi))=P_\sigma(f(\phi))$ for any phase $\phi\in\bR$. Let $A_1,\ldots,A_n$ be the $\sigma$-Harder--Narasimhan semistable factors of a split generator $G$ with phases $\phi_\sigma(A_1)>\cdots>\phi_\sigma(A_n)$. Then
    \[
    F(A_i)\in F(P_\sigma(\phi(A_i)))=P_{\sigma}(f(\phi(A_i))).
    \]
Hence $F(A_1),\ldots,F(A_n)$ are the $\sigma$-Harder--Narasimhan factors of $F(G)$ with phases $f(\phi(A_1))>\cdots>f(\phi(A_n))$. Therefore, we have
    \[
    \phi_\sigma^+(F^kG)=f^{(k)}(\phi(A_1))\text{ \ and \ }
    \phi_\sigma^-(F^kG)=f^{(k)}(\phi(A_n))
    \]
for any $k\geq1$. Hence
    \[
    \tau(F)=\tau^\pm(F)=\wtilde{\rho}(f)
    \]
by \autoref{theorem:computeviastability} and the definition of Poincar\'e translation number.

By condition (ii) in \autoref{definition:DHKKpseudoAnosov}, the $x$-axis and $y$-axis are preserved under the linear map $T$. Since $T$ and $f$ are compatible under the identification $S^1\cong(\bR^2\backslash\{(0,0)\})/\bR_{>0}\cong\bR/2\bZ$, we have $f(0)\in\bZ$.
Thus $f^{(k)}(0)=kf(0)$ since $f$ is compatible with integral shifts.
Therefore $\wtilde{\rho}(f)=f(0)\in\bZ$.
\end{proof}



\subsection{An example for Calabi--Yau manifolds}
	\label{subsec:example_CY}
We include also a computation of shifting numbers for pseudo-Anosov autoequivalences associated to Calabi--Yau manifolds.
\begin{theorem}
	\label{theorem:example_CY}
Let $X$ be a projective Calabi--Yau manifold of dimension $N\geq3$. Consider the autoequivalence of $\cD^b\Coh(X)$ given by the composition
	\[
	F\coloneqq T_{\cO_X}\circ(-\otimes\cO_X(-1)).
	\]
Then
	 \[
	 \tau^+(F)=0 \text{ \ and \ } \tau^-(F)=1-N.
	 \]
\end{theorem}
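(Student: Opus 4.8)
The plan is to compute the categorical entropy function $h_t(F)$ of $F = T_{\cO_X}\circ(-\otimes\cO_X(-1))$ explicitly on $t\le 0$ and on $t\ge 0$ separately, and then read off $\tau^\pm(F)$ via $\tau^\pm(F)=\lim_{t\to\pm\infty}h_t(F)/t$ from \autoref{theorem:shiftingnumberviaentropy}. The upper shifting number is the easy half: I would first observe that $-\otimes\cO_X(-1)$ preserves the standard $t$-structure on $\cD^b\Coh(X)$, while $T_{\cO_X}$ shifts cohomological amplitude only upward in a controlled way. More precisely, the standard strategy is to bound $\ep^+(G,F^nG)$ from above: using the triangle $\Hom^\bullet(\cO_X,-)\otimes\cO_X\to(-)\to T_{\cO_X}(-)$ together with the Calabi--Yau (Serre-trivial up to shift by $N$) structure and the fact that on a Calabi--Yau $N$-fold $\cO_X(-1)$ is, in a suitable sense, ``positive,'' one shows that $F$ never creates positive $\Ext$-degree growth. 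Concretely, one expects $h_t(F)=0$ for $t\ge 0$, whence $\tau^+(F)=0$ by \autoref{theorem:shiftingnumberviaentropy}; this parallels the spherical-twist computation in \autoref{thm:spherical_twist_calculation} and should follow from the same input, namely Ouchi's entropy computation for spherical twists (here $\cO_X$ is $N$-spherical on a Calabi--Yau $N$-fold) combined with the entropy identities $h_t(F\circ[k])=h_t(F)+kt$ and $h_t(F_1F_2)=h_t(F_2F_1)$ from \autoref{proposition:basicproperties} and \cite{DimitrovHaidenKatzarkov_Dynamical-systems-and-categories}.

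For the lower shifting number $\tau^-(F)=1-N$, the main work is to produce, for all large $n$, a nonzero morphism $\Hom(G,F^nG[k])\ne 0$ with $k$ as negative as $(1-N)n+O(1)$, and to show nothing more negative occurs. I would take $G=\bigoplus_{j=0}^{\dim X}\cO_X(j)$ (or any line-bundle split generator), reduce by conjugation-invariance and $[k]$-shift to a convenient normalization, and then track the minimal-phase factor. The cleanest route is via a Bridgeland stability condition: by \autoref{theorem:computeviastability}, if $\cD^b\Coh(X)$ admits a stability condition (for instance on abelian threefolds, or after passing to the relevant geometric stability conditions, or by using the $A_\infty$/quiver model when available) then $\tau^-(F)=\lim_n \phi^-_\sigma(F^nG)/n$, and one estimates how far down in phase the twist $T_{\cO_X}$ pushes the bottom HN factor at each step. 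Since $T_{\cO_X}$ acts on $\cO_X$-orthogonal objects by a shift entangled with the spherical relation, the bottom phase should decrease by roughly $N-1$ per iterate, giving $\tau^-(F)=1-N$. If one prefers to avoid stability conditions (they are not known to exist on all Calabi--Yau $N$-folds), the alternative is a direct induction on $\Hom^\bullet(\cO_X,F^n\cO_X)$: set $E_n\coloneqq F^n\cO_X$, use the defining triangle for $T_{\cO_X}$ and the twisting by $\cO_X(-1)$ to get a recursion for the graded vector space $\Hom^\bullet(\cO_X,E_n)$, and show by induction that its minimal nonzero degree grows like $(N-1)n$.

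The hard part will be the lower bound on the size of $|\tau^-(F)|$, i.e.\ guaranteeing that the deep negative $\Ext$'s predicted by the recursion do not cancel. In the pure spherical-twist case this non-vanishing is handled in \cite{Ouchi2020_On-entropy-of-spherical-twists}; here one must control the interaction between the spherical relation for $\cO_X$ and the ample twist $\cO_X(-1)$, using that $H^\bullet(X,\cO_X(m))$ for $m<0$ vanishes in low degrees and is nonzero in top degree $N$ (Serre duality plus ampleness), so that each application of $F$ genuinely shifts the bottom of the complex down by $N-1$. Once the recursion is set up and the relevant cohomology of line bundles on $X$ is pinned down by Kodaira-type vanishing and Serre duality, the induction closes; combined with the upper bound $h_t(F)=(1-N)t$ for $t\le 0$ (again via \autoref{theorem:shiftingnumberviaentropy} and the spherical-twist entropy formula after conjugating away the line-bundle twist), this yields $\tau^-(F)=1-N$ and hence the theorem.
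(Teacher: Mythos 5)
Your proposal does not reach the paper's argument, and two of its key claims are actually false; the missing ingredient is the explicit entropy formula for this specific autoequivalence proved in \cite{Fan2018_Entropy-of-an-autoequivalence-on-Calabi-Yau-manifolds}, which the paper's proof is built around but which you never invoke.

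The paper simply cites \cite[Theorem~1.1]{Fan2018_Entropy-of-an-autoequivalence-on-Calabi-Yau-manifolds}: $h_t(F)$ is the unique positive root of $\sum_{k\geq1}\chi(\cO(k))\,e^{-h_t(F)\,k}=e^{(N-1)t}$, rewrites the left-hand side as a rational function $Q_N(e^{h_t})/(e^{h_t}-1)^{N+1}$ using polylogarithm identities (since $\chi(\cO(k))$ is a degree-$N$ polynomial in $k$ by Riemann--Roch), and reads off the asymptotics: as $t\to+\infty$ one is forced to $h_t(F)\to 0$, hence $\tau^+(F)=0$; as $t\to-\infty$ one is forced to $h_t(F)\to+\infty$ and matching exponential rates gives $h_t(F)\sim(1-N)t$, hence $\tau^-(F)=1-N$. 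This is a closed-form calculation, not an estimate.

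Several of your intermediate claims do not survive scrutiny. First, you assert $h_t(F)=0$ for $t\geq 0$; this is false — the same reference shows $h_t(F)>0$ for \emph{all} $t$, which is precisely what makes $F$ a counterexample to the categorical Gromov--Yomdin conjecture (as the paper notes in the remark following the theorem). The correct statement is only $h_t(F)\to 0$ as $t\to+\infty$, which still gives $\tau^+(F)=0$ but by a weaker mechanism than you suppose. Second, ``conjugating away the line-bundle twist'' cannot work: $T_{\cO_X}$ and $-\otimes\cO_X(-1)$ do not commute, so $F$ is not of the form $gT_{\cO_X}g^{-1}[k]$; indeed $h_0(T_{\cO_X})=0$ while $h_0(F)>0$, and $h_0$ is a conjugacy invariant (\autoref{proposition:basicproperties}), so no such conjugation exists. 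The nontrivial coupling between the spherical twist and the negative line-bundle twist is the entire content of the example and cannot be disentangled. Third, the stability-condition route via \autoref{theorem:computeviastability} is unavailable here, as you yourself note: Bridgeland stability conditions are not known to exist on general projective Calabi--Yau manifolds of dimension $N\geq 3$, yet the theorem is stated in that generality. Finally, the ``direct induction'' on $\Hom^\bullet(\cO_X,F^n\cO_X)$ you propose as a fallback is not carried out, and controlling the possible cancellations in that recursion would essentially reproduce the analysis of \cite{Fan2018_Entropy-of-an-autoequivalence-on-Calabi-Yau-manifolds} — so this is not a genuinely independent route, just the hard direction left unproved.
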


\begin{proof}
By \cite[Theorem~1.1]{Fan2018_Entropy-of-an-autoequivalence-on-Calabi-Yau-manifolds}, the value of the categorical entropy function $h_t(F)$ at $t\in\bR$ is the unique positive real number satisfying
	\[
	\sum_{k\geq1}\frac{\chi(\cO(k))}{e^{h_t(F)\cdot k}}=e^{(N-1)t}.
	\]
Note that $\chi(\cO(k))$ is a polynomial of degree $N$ in $k$ by Riemann--Roch. Recall also that for $\ell\in\bZ_{\geq0}$ and $x>1$, there is an integral polynomial $P_\ell$ of degree $\ell$ such that
 	\[
	\sum_{k\geq1}\frac{k^\ell}{x^k}=\mathrm{Li}_{-\ell}\Big(\frac{1}{x}\Big)=\frac{P_\ell(x)}{(x-1)^{\ell+1}}.
	\]
Hence we have
	\begin{equation}
		\label{equation:example_CY}
	e^{(N-1)t}=\sum_{k\geq1}\frac{\chi(\cO(k))}{e^{h_t(F)\cdot k}}=\frac{Q_N(e^{h_t(F)})}{(e^{h_t(F)}-1)^{N+1}}
	\end{equation}
for a polynomial $Q_N$ of degree $N$.

To compute the shifting numbers of $F$, we need to study the asymptotic behavior of $h_t(F)$ as $t\ra\pm\infty$.
First, let us consider $t\ra+\infty$.
By \autoref{theorem:shiftingnumberviaentropy} and the fact that $h_t(F)>0$ for any $t$ \cite[Theorem~1.1]{Fan2018_Entropy-of-an-autoequivalence-on-Calabi-Yau-manifolds} (for this particular autoequivalence $F$), the limit $\lim_{t\ra+\infty}h_t(F)$ is either zero or $+\infty$. 
Observe that the case $\lim_{t\ra+\infty}h_t(F)=+\infty$ can be excluded by \autoref{equation:example_CY}. Therefore we have $\lim_{t\ra+\infty}h_t(F)=0$, hence $\tau^+(F)=0$ by \autoref{theorem:shiftingnumberviaentropy}.

Now we let $t\ra-\infty$. By the same argument as above, the limit $\lim_{t\ra-\infty}h_t(F)$ is either zero or $+\infty$. In this case $\lim_{t\ra-\infty}h_t(F)=0$ is excluded by \autoref{equation:example_CY}, so we have $\lim_{t\ra-\infty}h_t(F)=+\infty$. By multiplying $e^{h_t(F)}$ on both sides of  \autoref{equation:example_CY}, we get
	\[
	e^{h_t(F)+(N-1)t}=\frac{e^{h_t(F)}Q_N(e^{h_t(F)})}{(e^{h_t(F)}-1)^{N+1}}.
	\]
Since $\lim_{t\ra-\infty}h_t(F)=+\infty$, the limit $\lim_{t\ra-\infty}$ of the right hand side of the above equality is a finite number. Therefore $\tau^-(F)=1-N$ by \autoref{theorem:shiftingnumberviaentropy}.
\end{proof}

Below is a sketch comparing the graphs of the entropy functions $h_t(T_{\mathcal{O}_X})$ and $h_t(T_{\cO_X}\circ(-\otimes\cO_X(-1)))$.

\begin{figure}[htbp]\centering
\begin{tikzpicture}[yscale=1,xscale=1,>=stealth]
  \draw[->,thick] (-2.5, 0) -- (4.2, 0) node[right] {$t$};
  \draw[->,thick] (0, -1) -- (0, 4.5);
  \draw[scale=0.5, domain=0.05:8, smooth, variable=\y,thick] plot ({(2+3*\y-4*ln(exp(\y)-1))/2}, {\y}) node[above] {\small $h_t(F)$};
  \draw[-,thick] (0,0) -- (-2,4) node[left] {\small $h_t(T_{\mathcal{O}_X})$};
\end{tikzpicture}
\end{figure}

\begin{remark}
The autoequivalence $F=T_{\cO_X}\circ(-\otimes\cO_X(-1))$, at least in the case of quintic Calabi--Yau threefolds, is a \emph{pseudo-Anosov autoequivalence} in the sense of \cite{FFHKL}.
It was used in \cite{Fan2018_Entropy-of-an-autoequivalence-on-Calabi-Yau-manifolds} to construct a counterexample to the categorical Gromov--Yomdin conjecture.
\end{remark}




\section{Quasimorphisms and shifting numbers}
	\label{sec:quasimorphisms_and_shifting_numbers}

\paragraph{Outline}
We establish in this section that in some cases of interest, the shifting number yields a quasimorphism on the corresponding group of autoequivalences.
Specifically, \autoref{sec:ellcurve} deals with the case of elliptic curves and \autoref{sec:abelsurface} deals with the case of an abelian surface.
In \autoref{sec:CYA2} the case of the $N$-Calabi--Yau category of the $A_2$ quiver is analyzed.
This last example is an instance where the upper and lower shifting numbers do not agree, but their average does give a quasimorphism.
The case of abelian surfaces is revisited in \autoref{sec:quasimorphisms_on_lie_groups}, where the same quasimorphism is obtained from a construction associated to the central extension of the Lie group $\SO_{2,\rho}(\bR)$.


\subsection{The derived category of an elliptic curve}
    \label{sec:ellcurve}

The case of autoequivalences for a curve of genus $g\geq 2$ or $g=0$ is handled by \autoref{corollary:standardautoequivalence}, and in \autoref{theorem:ellcurvequasihomom} below we handle the case of an elliptic curve.
Together, these results show that \autoref{ques:quasimorphism} has an affirmative answer if $\cD$ is the bounded derived category of coherent sheaves on a curve.

\begin{theorem}
\label{theorem:ellcurvequasihomom}
Let $\cD$ be the bounded derived category of coherent sheaves on an elliptic curve. Then $\tau^+(F)=\tau^-(F)\in\bR$, and the shifting numbers
    \[
    \tau=\tau^\pm\colon\Aut(\cD)\ra\bR
    \]
give a quasimorphism.
Moreover, $\tau$ can be factored as
    \[
    \Aut(\cD)\xrightarrow{s}\glstab\xrightarrow{t}\Homeo^+_\bZ(\bR)\xrightarrow{\wtilde{\rho}}\bR,
    \]
where $s,t$ are homomorphisms, and $\wtilde{\rho}$ is the quasimorphism given by the Poincar\'e translation number (see \autoref{sssec:rotation_number_for_glstab}).
\end{theorem}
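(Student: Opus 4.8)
The key structural fact about the derived category of an elliptic curve $E$ is that its group of autoequivalences is well understood: by the work of Orlov there is an exact sequence fitting $\Aut(\cD)$ between the "trivial" autoequivalences (shifts, tensoring by line bundles of degree $0$, pullback by automorphisms of $E$ fixing the origin, translations) and $\SL(2,\bZ)$, which records the action on the numerical Grothendieck group $N(\cD)\cong\bZ^2$ equipped with the Euler form (a symplectic form in rank $2$). So the first step is to recall this description and, more importantly, to recall that the space of stability conditions $\Stab(\cD)$ on an elliptic curve is known (by Bridgeland) to be a single orbit: $\Stab(\cD)\cong \glstab$ acting freely and transitively (after fixing a reference stability condition $\sigma_0$). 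This is what makes the elliptic curve case tractable compared to higher genus or the $A_2$ quiver.

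Given this, I would construct the homomorphism $s\colon \Aut(\cD)\to\glstab$ as follows. Since $\Aut(\cD)$ acts on $\Stab(\cD)$ on the left, commuting with the free transitive right $\glstab$-action, for each $F\in\Aut(\cD)$ there is a unique $g_F\in\glstab$ with $F\cdot\sigma_0 = \sigma_0\cdot g_F$; one checks $F\mapsto g_F$ is a homomorphism (this uses that the two actions commute, established in the text, plus freeness/transitivity of the $\glstab$-action). Then $t\colon\glstab\to\Homeo^+_\bZ(\bR)$ is simply the map $(T,f)\mapsto f$ appearing in the description of $\glstab$ recalled in \autoref{subsec:pseudoAnosov}; it is manifestly a homomorphism. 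Finally $\wtilde\rho$ is the Poincar\'e translation number, which is a quasimorphism on $\Homeo^+_\bZ(\bR)$ as recalled in the introduction. Composing, $\wtilde\rho\circ t\circ s$ is a quasimorphism on $\Aut(\cD)$ since the pullback of a quasimorphism along a homomorphism is a quasimorphism.

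It then remains to identify this composite with the shifting number $\tau^\pm(F)$, and in particular to show $\tau^+=\tau^-$. Here I would invoke \autoref{theorem:computeviastability}: fixing the split generator $G$ and the reference stability $\sigma_0$, one has $\tau^\pm(F)=\lim_n \phi^\pm_{\sigma_0}(F^nG)/n$. Writing $F\cdot\sigma_0=\sigma_0\cdot g_F$ with $g_F=(T_F,f_F)$ and iterating, $F^n\cdot\sigma_0 = \sigma_0\cdot g_F^n$, so exactly as in the proof of \autoref{proposition:DHKKrotationnumber} the Harder--Narasimhan phases of $F^nG$ with respect to $\sigma_0$ are obtained by applying $f_F^{(n)}$ to the (finitely many) phases of the HN factors of $G$; hence $\phi^+_{\sigma_0}(F^nG)/n$ and $\phi^-_{\sigma_0}(F^nG)/n$ both converge to $\wtilde\rho(f_F)$, using $f_F(\phi+1)=f_F(\phi)+1$ so that the spread between the largest and smallest phase stays bounded. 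This simultaneously gives $\tau^+(F)=\tau^-(F)=\wtilde\rho(f_F)=\wtilde\rho(t(s(F)))$ and completes the factorization. Note that unlike in \autoref{proposition:DHKKrotationnumber} we do not need $F$ to be pseudo-Anosov: the argument only used that $F$ preserves the $\glstab$-orbit $\Stab(\cD)$, which holds for \emph{every} autoequivalence here precisely because that orbit is all of $\Stab(\cD)$.

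**Main obstacle.** The dynamical/limit argument is routine once the setup is in place; the real content is entirely in the structural input, namely that $\Stab(\cD)$ for an elliptic curve is a single free transitive $\glstab$-orbit (so that $s$ is defined on all of $\Aut(\cD)$, not merely on pseudo-Anosov elements) and that $\Aut(\cD)$ is finitely generated / has the Orlov description so one can even speak of the homomorphisms concretely. I expect the fussy points to be: verifying $s$ is well-defined and a homomorphism (freeness of the $\glstab$-action and that $F\cdot(-)$ really maps $\Stab\to\Stab$), and being careful that $\wtilde\rho$ is only a quasimorphism, not a homomorphism, so the final map is a quasimorphism and no more — which is exactly what the theorem claims.
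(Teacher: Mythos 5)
Your proposal is correct and follows essentially the same route as the paper: use Bridgeland's result that $\glstab$ acts freely and transitively on $\Stab(\cD)$ to define $s$, take $t$ as the projection $(T,f)\mapsto f$, and then track Harder--Narasimhan phases of $F^nG$ under the stability-condition characterization of $\tau^\pm$ to identify the shifting number with $\wtilde\rho(t(s(F)))$. The only cosmetic difference is that the paper works with the explicit generator $G=\cO\oplus\cO(1)$ and uses stability of line bundles, whereas you run the more general phase-transport argument from \autoref{proposition:DHKKrotationnumber} on an arbitrary split generator and bound the phase spread via $\bZ$-equivariance of $f$; both yield the same computation.
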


\begin{proof}
First, we define the group homomorphism $s$. By \cite[Theorem~9.1]{Bridgeland2007_Stability-conditions-on-triangulated-categories}, the $\glstab$-action on $\Stab(\cD)$ is free and transitive. Fix a stability condition $\sigma_0\in\Stab(\cD)$, and define
    \[
    \Phi_{\sigma_0}\colon\glstab \toisom \Stab(\cD),\ \ g\mapsto\sigma_0\cdot g.
    \]
Using this identification, one defines a map
    \[
    s\colon \Aut(\cD)\ra\glstab,\ \ F\mapsto\Phi_{\sigma_0}^{-1}(F\cdot\sigma_0).
    \]
Since the $\Aut(\cD)$-actions and $\glstab$-actions on $\Stab(\cD)$ commute with each other, the map $s$ is a group homomorphism.

Next, we define the second group homomorphism $t$. Recall that elements in $\glstab$ can be represented by pairs $(T,f)$, where $T\in\GL^+(2,\bR)$ and $f\in\Homeo_\bZ(\bR)$ satisfy certain compatibility conditions. The group homomorphism $t$ is defined to be the map given by the second component
    \[
    t\colon\glstab\ra\Homeo^+_\bZ(\bR),\ \ 
    (T,f)\mapsto f.
    \]
    
Now we compute the shifting numbers of an autoequivalence $F\in\Aut(\cD)$. 
Choose a split generator $G=\cO\oplus\cO(1)$, where $\cO(1)$ is an ample line bundle on the elliptic curve.
By \autoref{theorem:computeviastability}, we have
    \[
    \tau^\pm(F)=\lim_{n\ra\infty}\frac{\phi^\pm_{\sigma_0}(F^nG)}{n}.
    \]
Recall that both $\cO$ and $\cO(1)$ are stable with respect to any stability condition on $\cD$. Denote $\phi_0\coloneqq\phi_{\sigma_0}(\cO)$ and $\phi_1\coloneqq\phi_{\sigma_0}(\cO(1))$. Then
    \[
    \phi_0=\phi_{F\cdot\sigma_0}(F(\cO))=\phi_{\sigma_0\cdot s(F)}(F(\cO)).
    \]
Hence $F(\cO)$ is $\sigma_0$-stable and of phase $((t\circ s)(F))(\phi_0)$. Denote the image of $F$ under $t\circ s$ by
    \[
    \widetilde{F}\coloneqq(t\circ s)(F)\in\Homeo^+_\bZ(\bR).
    \]
Then we have
    \[
    \phi_{\sigma_0}(F^n(\cO))=\widetilde{F}^{(n)}(\phi_0),\text{ and similarly }
    \phi_{\sigma_0}(F^n(\cO(1)))=\widetilde{F}^{(n)}(\phi_1).
    \]
Therefore
    \[
    \phi^+_{\sigma_0}(F^nG)=\widetilde{F}^{(n)}(\max\{\phi_0,\phi_1\})\text{ \ and \ }
    \phi^-_{\sigma_0}(F^nG)=\widetilde{F}^{(n)}(\min\{\phi_0,\phi_1\}).
    \]
This proves
    \[
    \tau(F)=\tau^\pm(F)=\wtilde{\rho}(\widetilde{F})=(\wtilde{\rho}\circ t\circ s)(F).
    \]
by \autoref{theorem:computeviastability}. Since the composition of a group homomorphism and a quasimorphism is a quasimorphism, this concludes the proof.
\end{proof}



\subsection{The derived category of an abelian surface}
    \label{sec:abelsurface}

\begin{theorem}
\label{theorem:abelsurface}
Let $\cD=\cD^b\Coh(X)$ be the bounded derived category of coherent sheaves on an abelian surface $X$. Then $\tau^+(F)=\tau^-(F)$ and the shifting numbers
    \[
    \tau=\tau^\pm\colon\Aut(\cD)\ra\bR
    \]
give a quasimorphism.

Furthermore, the shifting numbers agree with the homogenization of the quasimorphism
\begin{align*}
    \wtilde{\tau} \colon & \Aut(\cD)\to \bR\\
    & \wtilde{\tau}(F)  = \phi_{\sigma_0}(F(k(x)))
\end{align*}
where $\sigma_0$ is a fixed (geometric) stability condition and $\phi_{\sigma_0}$ denotes the phase with respect to $\sigma_0$, while $x\in X(\bC)$ is a point and $k(x)$ is the skyscraper sheaf at $x$.
\end{theorem}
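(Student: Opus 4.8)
The plan is to combine the computation of shifting numbers via phases of stability conditions (\autoref{theorem:computeviastability}) with the very rigid structure of $\Aut(\cD)$ and of the space of stability conditions for an abelian surface. First I would recall that for an abelian surface $X$, every autoequivalence acts on the Mukai lattice $\cN(X)$ preserving the Mukai pairing, and moreover $\Aut(\cD^b\Coh(X))$ acts (after passing to the connected component $\Stab^\dagger(\cD)$ of geometric stability conditions) in a way that is compatible with the $\glstab$-action; the key structural input, due to work on stability on abelian surfaces (Bridgeland, and Huybrechts--Macr\`i--Stellari), is that $\Stab^\dagger(\cD)$ has a nice description and that the skyscraper sheaves $k(x)$ are all stable of the same phase in any geometric stability condition, and remain so after applying any autoequivalence that preserves the geometric component. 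Since every autoequivalence preserves $\Stab^\dagger$ up to the known structure, $F(k(x))$ is again a stable object, so $\phi_{\sigma_0}(F(k(x)))$ is a well-defined real number and $\wtilde{\tau}$ makes sense.

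Next I would run the orbit computation exactly as in the elliptic curve case (\autoref{theorem:ellcurvequasihomom}). Choosing a split generator $G$, one shows that $\phi^{\pm}_{\sigma_0}(F^nG)$ differs from $\phi_{\sigma_0}(F^n(k(x)))$ by a bounded amount — here one uses that $k(x)$ generates $\cD$ together with the fact that the phases of the Harder--Narasimhan factors of $F^nG$ are controlled by how $F^n$ moves phases, which in turn is governed by the induced lift $\widetilde{F}$ of the $\GL^+_2(\bR)$-action on the upper half plane / on phases. Because the $\Aut(\cD)$-action and $\glstab$-action commute, one gets $\phi_{\sigma_0}(F^n(k(x))) = \widetilde{F}^{(n)}(\phi_{\sigma_0}(k(x)))$ for an honest element $\widetilde{F}\in\Homeo^+_\bZ(\bR)$ depending on $F$ (the second coordinate of the $\glstab$-element realizing $F\cdot\sigma_0 = \sigma_0\cdot g_F$), and then
\[
\tau^{\pm}(F)=\lim_{n\to\infty}\frac{\phi^{\pm}_{\sigma_0}(F^nG)}{n}=\lim_{n\to\infty}\frac{\widetilde{F}^{(n)}(\phi_{\sigma_0}(k(x)))}{n}=\wtilde{\rho}(\widetilde{F})
\]
by \autoref{theorem:computeviastability}. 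In particular the upper and lower shifting numbers agree (both equal $\wtilde{\rho}(\widetilde{F})$), and $\tau = \wtilde{\rho}\circ(t\circ s)$ is a quasimorphism since it factors a group homomorphism $F\mapsto\widetilde{F}$ through the Poincar\'e translation number.

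Finally, to identify $\tau$ with the homogenization of $\wtilde{\tau}$, I would note that $\wtilde{\tau}(F^n) = \phi_{\sigma_0}(F^n(k(x))) = \widetilde{F}^{(n)}(\phi_{\sigma_0}(k(x)))$, so
\[
\lim_{n\to\infty}\frac{\wtilde{\tau}(F^n)}{n}=\lim_{n\to\infty}\frac{\widetilde{F}^{(n)}(\phi_{\sigma_0}(k(x)))}{n}=\wtilde{\rho}(\widetilde{F})=\tau(F),
\]
which is exactly the statement that $\tau$ is the homogenization of $\wtilde{\tau}$; that $\wtilde{\tau}$ is itself a quasimorphism follows because $\widetilde{F\circ F'}$ and $\widetilde{F}\circ\widetilde{F'}$ agree on the nose (the map $F\mapsto\widetilde{F}$ is a genuine homomorphism), and $f\mapsto f(\phi_0)$ differs from the quasimorphism $\wtilde\rho$ by a bounded amount on $\Homeo^+_\bZ(\bR)$. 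The main obstacle I anticipate is the first step: justifying that an \emph{arbitrary} autoequivalence of $\cD^b\Coh(X)$ keeps the skyscraper sheaves stable and behaves compatibly with the $\glstab$-action — i.e.\ that one may always conjugate into the geometric component. On an abelian surface this is true, but it relies on the classification of $\Aut(\cD)$ for abelian surfaces and on Bridgeland's description of $\Stab^\dagger$; one must be careful that there are no "exotic" components of $\Stab(\cD)$ or autoequivalences (like twists by non-geometric spherical objects — which do not exist here since abelian surfaces carry no spherical objects) that would break the argument. Once that rigidity is in hand, the rest is a routine transcription of the elliptic-curve proof.
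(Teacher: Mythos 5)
Your proposal runs aground at the one step you flag as "the main obstacle," but for a more basic reason than you anticipate. For an elliptic curve, $\Stab(\cD)\cong\glstab$ is a single free orbit, so for every $F\in\Aut(\cD)$ there is a unique $g_F\in\glstab$ with $F\cdot\sigma_0=\sigma_0\cdot g_F$, and $F\mapsto g_F$ is a homomorphism; the whole argument rides on that. For an abelian surface this fails outright: the Mukai lattice has rank $\rho+2\geq 3$, so $\Stab^\dagger(\cD)$ has real dimension $2(\rho+2)\geq 6$, while a $\glstab$-orbit has dimension at most $4$. Thus the $\glstab$-action on $\Stab^\dagger$ is never transitive for an abelian surface, and a typical $F\in\Aut(\cD)$ does \emph{not} satisfy $F\cdot\sigma_0=\sigma_0\cdot g_F$ for any $g_F$. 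Consequently the element $\widetilde{F}\in\Homeo^+_\bZ(\bR)$ you invoke does not exist, the factorization $\tau=\wtilde\rho\circ(t\circ s)$ does not make sense, and both the identity $\phi_{\sigma_0}(F^n k(x))=\widetilde{F}^{(n)}(\phi_0)$ and the resulting quasimorphism bound collapse. The issue is not whether skyscraper sheaves stay stable (they do, by Bridgeland's slice structure for $V(\cD)\subset\Stab^\dagger$); it is that the autoequivalence action cannot be rewritten as a $\glstab$-action on the nose.

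The paper's proof handles this differently. The first half of your plan is salvageable: line bundles and skyscraper sheaves are $\sigma_0$-stable and remain stable under every autoequivalence (since autoequivalences preserve $\Stab^\dagger$ and $V(\cD)$ is a $\glstab$-slice), and the Serre duality bound $\phi_{\sigma_0}(F^n k(x))-2\leq \phi^{\pm}_{\sigma_0}(F^nG)\leq\phi_{\sigma_0}(F^n k(x))$ for $G$ a direct sum of line bundles gives $\tau^+(F)=\tau^-(F)=\lim_n \phi_{\sigma_0}(F^n k(x))/n$ directly via \autoref{theorem:computeviastability} — no homomorphism to $\Homeo^+_\bZ(\bR)$ required. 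For the quasimorphism bound, the paper then estimates $\abs{\wtilde\tau(FG)-\wtilde\tau(F)-\wtilde\tau(G)}$ by hand: after normalizing $\wtilde\tau(F),\wtilde\tau(G)\in(0,1]$ by shifts, it uses the \emph{correct} form of the slice property — there exists a unique $g=(T,f)\in\glstab$ with $F^{-1}\sigma_0\,g\in V(\cD)$ (not $F^{-1}\sigma_0\,g=\sigma_0$) — together with the Bridgeland--Maciocia fact that any Fourier--Mukai transform between abelian surfaces is a sheaf transform, and Bridgeland's phase bound $-1<\phi_\sigma(\text{sheaf})\leq 1$ for $\sigma\in V(\cD)$, to bound $f(\phi_{F^{-1}\sigma_0 g}(G(k(x))))$. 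You would need to replace your homomorphism-to-$\Homeo^+_\bZ$ argument with an estimate of this kind to complete the proof.
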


\subsubsection{Distinguished component and slice}
    \label{sssec:distinguished_component_and_slice}
We first recall the description of a distinguished connected component $\Stab^\dagger(\cD)\subseteq\Stab(\cD)$ of the space of stability conditions on $\cD$.
The main reference is \cite{Bridgeland2008_Stability-conditions-on-K3-surfaces}.
For each complexified ample class $\beta+i\omega$, i.e.~$\beta,\omega\in\NS(X)\otimes\bR$ and $\omega$ ample, there is an associated stability condition $\sigma_{\beta,\omega}\in\Stab^\dagger(\cD)$.
The central charge of $\sigma_{\beta,\omega}$ is given by
    \[
    Z_{\beta,\omega}(E)=\left<\exp(\beta+i\omega),v(E)\right>,
    \]
where $\left<-,-\right>$ and $v(-)$ denote the Mukai pairing and Mukai vector, respectively.
Furthermore, skyscraper sheaves are $\sigma_{\beta,\omega}$-stable and of phase $1$, a fact particular to abelian surfaces.
This collection of stability conditions associated to complexified ample classes on $X$ defines a submanifold $V(\cD)\subseteq\Stab^\dagger(\cD)$.

\begin{theorem}[Bridgeland {\cite[\S15]{Bridgeland2008_Stability-conditions-on-K3-surfaces}}]
\label{theorem:bridgelandk3}
Let $\cD$ be the bounded derived category of coherent sheaves on an abelian surface, and consider $V(\cD)\subseteq\Stab^\dagger(\cD)$ as above.
    \begin{enumerate}
        \item The set $V(\cD)$ defines a slice for the $\glstab$-action, i.e. for any $\sigma\in\Stab^\dagger(\cD)$, there exists a unique element $g\in\glstab$ such that $\sigma\cdot g\in V(\cD)$.
        \item Autoequivalences preserve the distinguished connected component $\Stab^\dagger(\cD)$, i.e. for any $\sigma\in\Stab^\dagger(\cD)$ and $F\in\Aut(\cD)$, $F\cdot\sigma\in\Stab^\dagger(\cD)$, 
    \end{enumerate}
\end{theorem}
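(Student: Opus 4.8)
The result is Bridgeland's, and the plan is to reconstruct the argument of \cite[\S15]{Bridgeland2008_Stability-conditions-on-K3-surfaces}; the abelian-surface case is somewhat simpler than the K3 case because an abelian surface carries no spherical (indeed no rigid) objects and its Mukai lattice $\cN(X)$ is even, so that every $\sigma$-semistable class $v$ satisfies $\langle v,v\rangle\geq 0$.

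\emph{Step 1 (the geometric locus and the slice property).} Call $\sigma\in\Stab(\cD)$ \emph{geometric} if every skyscraper sheaf $k(x)$ is $\sigma$-stable and all of them have a single common phase; acting by $\glstab$ we normalise this phase to $1$, and then for $\sigma\in V(\cD)$ one has $Z_\sigma(k(x))=\langle\exp(\beta+i\omega),v(k(x))\rangle=-1$ for every $x$. First I would show that the locus $U(\cD)\subseteq\Stab^\dagger(\cD)$ of geometric stability conditions equals the $\glstab$-orbit of $V(\cD)$, with $V(\cD)$ meeting each orbit exactly once: one inclusion, together with $\glstab$-invariance of $U(\cD)$, is immediate from the definition of $V(\cD)$, while conversely from a geometric $\sigma$ one reconstructs a unique complexified ample class $\beta+i\omega$ by inverting the values of $Z_\sigma$ on the Mukai vectors of skyscrapers and of line bundles, exhibiting $\sigma$ as a unique $\glstab$-translate of a unique $\sigma_{\beta,\omega}$. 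Freeness of the $\glstab$-action on $U(\cD)$ is then clear: an element of $\glstab$ fixing a geometric $\sigma$ must fix $Z_\sigma$ and the common skyscraper phase $1$, hence is the identity. Granting the identification $U(\cD)=\Stab^\dagger(\cD)$ from Step 2, this is exactly the slice assertion in part (1).

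\emph{Step 2 ($U(\cD)$ is open and closed in $\Stab^\dagger(\cD)$).} Openness is the standard deformation fact that stability of a given object holds in an open neighbourhood, applied to the family of skyscrapers. Closedness is the heart of the matter: given $\sigma_n\to\sigma$ with every $\sigma_n$ geometric, I must exclude that some $k(x)$ becomes strictly semistable or unstable at $\sigma$. A destabilising subobject contributes a $\sigma$-semistable Mukai vector $v$; the support property of $\sigma$ together with discreteness of $\cN(X)$ confine $v$ to finitely many numerical walls near $\sigma$, and, using $\langle v,v\rangle\geq 0$ as above, a semicontinuity argument on Harder--Narasimhan filtrations shows that $k(x)$ stays stable at $\sigma$ and that $\sigma$ lies on none of the walls. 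I expect this wall-and-chamber analysis to be the main obstacle, as it is precisely the technical core of Bridgeland's argument. Once it is in place, $U(\cD)$ is open, closed and nonempty in the connected set $\Stab^\dagger(\cD)$, hence $U(\cD)=\Stab^\dagger(\cD)$, completing part (1).

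\emph{Step 3 (autoequivalences preserve $\Stab^\dagger(\cD)$).} Every $F\in\Aut(\cD)$ acts on $\Stab(\cD)$ by a homeomorphism, hence permutes its connected components, and acts compatibly on $\cN(X)$ by a Mukai isometry $f_*$. Through the period map $\mathcal Z\colon\sigma\mapsto Z_\sigma$, the component $\Stab^\dagger(\cD)$ is a covering space of one of the two connected components, call it $\mathcal P^+(X)$, of the domain of those $\Omega\in\cN(X)\otimes\bC$ whose real and imaginary parts span a positive-definite $2$-plane (no hyperplanes are removed here, since an abelian surface has no spherical objects), and $F$ carries $\Stab^\dagger(\cD)$ onto a component covering $f_*(\mathcal P^+(X))$. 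Thus it suffices to check that $f_*$ preserves the chosen orientation, i.e. does not interchange $\mathcal P^+(X)$ with $\mathcal P^-(X)$; for an abelian surface this holds for every $F\in\Aut(\cD)$, which one verifies on generators of $\Aut(\cD)$ — shifts, the $\Aut(X)\ltimes\Pic(X)$ part, and Fourier--Mukai transforms with Poincar\'e-type kernels — using Orlov's classification of derived autoequivalences of abelian varieties. Together with the connectedness of $\Stab^\dagger(\cD)$ this gives $F\cdot\Stab^\dagger(\cD)=\Stab^\dagger(\cD)$, which is part (2).
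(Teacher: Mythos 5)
The paper itself does not prove \autoref{theorem:bridgelandk3}; it is quoted as a theorem of Bridgeland with a citation to \cite[\S15]{Bridgeland2008_Stability-conditions-on-K3-surfaces}, and no proof appears in the Fan--Filip text. So there is no in-paper argument to compare against; what follows is an assessment of your reconstruction on its own terms.

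Your three-step outline does track Bridgeland's actual strategy: identify the geometric locus $U(\cD)$ of stability conditions making all skyscrapers stable of a common phase as the free $\glstab$-orbit of $V(\cD)$; show $U(\cD)$ is open and closed in the connected component $\Stab^\dagger(\cD)$, using that an abelian surface has no rigid objects to rule out the walls that do occur for K3 surfaces; then map $\Stab^\dagger(\cD)$ through the period (central-charge) map to the open subset $\cP^+$ of $\cN(X)\otimes\bC$ and check that the Mukai-lattice isometry $f_*$ induced by any $F\in\Aut(\cD)$ preserves the $+$-component, using Orlov's classification of derived autoequivalences of abelian varieties. That is essentially the right skeleton.

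Two small corrections. First, the inequality $\langle v,v\rangle\geq 0$ should be asserted only for Mukai vectors of $\sigma$-\emph{stable} objects, not arbitrary semistable ones; the relevant input is Mukai's fact that any simple object $E$ in $\cD^b\Coh(X)$ over an abelian surface $X$ has $\ext^1(E,E)\geq 2$ (deforming along $X\times\widehat X$), hence $v(E)^2=\ext^1(E,E)-2\geq 0$. One then applies this to the Jordan--H\"older factors appearing in a Harder--Narasimhan filtration of $k(x)$ at a putative boundary point of $U(\cD)$. Second, in Step 2 you flag the wall-and-chamber analysis as the crux but do not state what the contradiction is: a boundary point $\sigma\in\partial U(\cD)\cap\Stab^\dagger(\cD)$ would force some stable factor $A$ of $k(x)$ with $v(A)^2<0$, which is exactly what the previous paragraph rules out; you should make that step explicit rather than appeal to semicontinuity in general. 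With those fixes the proposal is a faithful sketch of the argument being cited.
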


Now we prove \autoref{theorem:abelsurface}.

\begin{proof}[Proof of \autoref{theorem:abelsurface}]
By \cite{ArcaraMiles} line bundles are stable with respect to stability conditions in $V(\cD)$ and therefore stable with respect to stability conditions in $\Stab^\dagger(\cD)$ by \autoref{theorem:bridgelandk3}(i) since $V(\cD)$ is a slice for the $\glstab$-action.
By \autoref{theorem:bridgelandk3}(ii) the images of line bundles or skyscraper sheaves under autoequivalences are also stable with respect to any stability condition in $\Stab^\dagger(\cD)$.

Fix now a stability condition $\sigma_0\in V(\cD)$ and a split generator $G=\cO\oplus\cO(1)\oplus\cO(2)$.
We would like to compute the shifting numbers via \autoref{theorem:computeviastability}:
    \[
    \tau^\pm(F)=\lim_{n\ra\infty}\frac{\phi^\pm_{\sigma_0}(F^nG)}{n}.
    \]
For any line bundle $L$ and skyscraper sheaf $k(x)$ on $X$, one has
    \[
    \Hom(L,k(x))\cong\Hom(k(x),L[2])^\vee\neq0.
    \]
Therefore
    \[
    \phi_{\sigma_0}(F^nk(x))-2\leq\phi_{\sigma_0}(F^nL)\leq\phi_{\sigma_0}(F^nk(x))
    \]
for any autoequivalence $F$ and any $n\in\bZ$. Hence
    \[
    \phi_{\sigma_0}(F^nk(x))-2\leq\phi^-_{\sigma_0}(F^nG)\leq\phi^+_{\sigma_0}(F^nG)\leq\phi_{\sigma_0}(F^nk(x)).
    \]
By dividing $n$ and taking $n\ra\infty$, one obtains
    \[
    \limsup_{n\ra\infty}\frac{\phi_{\sigma_0}(F^nk(x))}{n}\leq\tau^-(F)\leq\tau^+(F)\leq\liminf_{n\ra\infty}\frac{\phi_{\sigma_0}(F^nk(x))}{n}.
    \]
Thus the limit $\lim_{n\ra\infty}\Big(\phi_{\sigma_0}(F^nk(x))/n\Big)$ exists and we have agreement of upper and lower shifting numbers:
    \[
    \tau(F)=\tau^\pm(F)=\lim_{n\ra\infty}\frac{\phi_{\sigma_0}(F^nk(x))}{n}.
    \]
    
Observe that $\tau$ is the homogenization of
    \[
    \widetilde{\tau}\colon\Aut(\cD)\ra\bR,\ \ F\mapsto\phi_{\sigma_0}(F(k(x))).
    \]
Therefore, to prove the theorem, it suffices to show that $\widetilde{\tau}$ is a quasimorphism, i.e.~there exists a constant $C$ such that for any $F,G\in\Aut(\cD)$,
    \[
    |\widetilde{\tau}(FG)-\widetilde{\tau}(F)-\widetilde{\tau}(G)|\leq C.
    \]
One can assume that
    \[
    0<\phi_{\sigma_0}(F(k(x)))\leq1\text{ \ and \ } 0<\phi_{\sigma_0}(G(k(x)))\leq1
    \]
by composing $F$ and $G$ with appropriate powers of the shift functor $[1]\in\Aut(\cD)$.
It remains to find an uniform bound of $\phi_{\sigma_0}(FG(k(x)))$ under these conditions.

Let $g=(T,f)\in\glstab$ be the unique element such that $F^{-1}\sigma_0g\in V(\cD)$. Then
    \[
    f(1)=\phi_{F^{-1}\sigma_0}(k(x))=\phi_{\sigma_0}(F(k(x)))\in(0,1]
    \]
by our assumption.
The quantity we would like to bound is
    \[
    \phi_{\sigma_0}(FG(k(x)))=\phi_{F^{-1}\sigma_0}(G(k(x)))=f(\phi_{F^{-1}\sigma_0g}(G(k(x)))).
    \]

There exists an integer $n$ such that $G(k(x))[n]$ is a coherent sheaf on $X$, since any Fourier--Mukai transform between derived categories of abelian surfaces is a sheaf transform \cite[Corollary~2.10]{BridgelandMaciocia}.
Then by \cite[Lemma~10.1(c)]{Bridgeland2008_Stability-conditions-on-K3-surfaces},
    \[
    -1<\phi_{\sigma_0}(G(k(x))[n])\leq1.
    \]
Since we assumed that $0<\phi_{\sigma_0}(G(k(x)))\leq1$ it follows that $-1\leq n\leq0$.

Since $F^{-1}\sigma_0g\in V(\cD)$, again by \cite[Lemma~10.1(c)]{Bridgeland2008_Stability-conditions-on-K3-surfaces} we have 
    \[
    -1<\phi_{F^{-1}\sigma_0g}(G(k(x))[n])\leq1.
    \]
Hence
    \[
    -2<\phi_{F^{-1}\sigma_0g}(G(k(x)))\leq1.
    \]
Since $f$ is an increasing map and compatible with integral shifts, using $f(1)\in(0,1]$ we obtain
    \[
    -3<f(-2)\leq\phi_{\sigma_0}(FG(k(x)))=f(\phi_{F^{-1}\sigma_0g}(G(k(x))))\leq f(1)\leq1.
    \]
This concludes the proof.
\end{proof}

\subsection{The Calabi--Yau categories of the \texorpdfstring{$A_2$}{A2} quiver}
    \label{sec:CYA2}

\subsubsection{Setup}
    \label{sssec:setup_the_calabi_yau_categories_of_the_a_2_quiver}
Fix for this section a positive integer $N\geq3$ and consider the bounded derived category $\cD_N=\cD_N(A_2)$ of the $N$-Calabi--Yau Ginzburg dg-algebra associated to the $A_2$ quiver $(\bullet\ra\bullet)$.
It is one of the simplest triangulated categories admitting spherical objects with non-trivial intersections.
We refer to \cite{Ginzburg2006_Calabi-Yau-algebras,Keller} for the precise definition.
The triangulated category $\cD_N$ is characterized by the following properties:
\begin{enumerate}
    \item It is an $N$-Calabi--Yau category, i.e. we have isomorphism
    $$
    \Hom^\bullet(E,F)\cong\Hom^\bullet(F,E[N])^\vee \text{ for any }
    E,F\in\cD_N.
    $$
    \item It is generated by two $N$-spherical objects $S_1$ and $S_2$ satisfying
    \[
        \Hom^\bullet(S_1,S_2)=\bC[-1].
    \]
\end{enumerate}
The space of Bridgeland stability conditions of $\cD_N$ is studied in \cite{BQS20}.
There is a distinguished connected component $\Stab_*(\cD_N)\subseteq\Stab(\cD_N)$ containing stability conditions whose heart $P(0,1]$ coincides with the canonical heart $\cH_N=\left<S_1,S_2\right>_\ext$ (see \cite[\S2 and \S4]{BQS20}).

Consider the subgroup $\Aut_*(\cD_N)\subseteq\Aut(\cD_N)$ generated by the shift functor $[1]$ and the spherical twists $T_1,T_2$ associated to the spherical objects $S_1,S_2$.
By \cite[Proposition~2.7]{BQS20}, the subgroup can be described using generators and relations as
    \[
    \Aut_*(\cD_N)=\left<T_1,T_2,[1]\ \Big|\ T_1T_2T_1=T_2T_1T_2,\ (T_1T_2)^3=[4-3N],\ T_i[1]=[1]T_i\right>.
    \]

\subsubsection{Central extension and braid group}
    \label{sssec:central_extension_and_braid_group}
The presentation of $\Aut_*(\cD_N)$ above allows us to write it as a central extension of $\PSL_{2}(\bZ)$:
    \[
    1\ra\bZ\ra\Aut_*(\cD_N)\xrightarrow{\alpha}\PSL_{2}(\bZ)\ra1.
    \]
The map $\bZ\ra\Aut_*(\cD_N)$ sends $1$ to the shift $[1]$ and $\alpha$ is given by
    \[
    T_1\mapsto\begin{pmatrix}1&1\\0&1\end{pmatrix},
    T_2\mapsto\begin{pmatrix}1&0\\-1&1\end{pmatrix}, \text{ and  }
    [1]\mapsto\begin{pmatrix}1&0\\0&1\end{pmatrix} \text{ in }\PSL_{2}(\bZ).
    \]

As a preliminary to the main result of this section, we need:
\begin{definition}
The group homomorphism $w\colon\Aut_*(\cD_N)\ra\bR$ is defined by setting
    \[
    w(T_1)=w(T_2)=\frac{4-3N}{6}\text{ \ and \ }w([1])=1.
    \]
\end{definition}

\begin{theorem}
\label{theorem:CYNA2quiver}
The map given by the shifting number
    \[
    \tau\colon\Aut_*(\cD_N)\ra\bR
    \]
is a quasimorphism. More precisely,
    \[
    \tau=w-\tfrac16(\phi\circ\alpha),
    \]
where $\phi$ is the (homogenization of the) Rademacher function on $\PSL_{2}(\bZ)$, see \autoref{app:quasihomomorphismPSL2}.
\end{theorem}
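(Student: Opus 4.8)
The plan is to compute $\tau$ explicitly on a convenient set of representatives and then recognize the answer as $w - \tfrac16(\phi\circ\alpha)$, using the homogeneity property $\tau(F^n) = n\tau(F)$ from \autoref{proposition:basicproperties} together with the conjugacy invariance $\tau(GFG^{-1}) = \tau(F)$. Since $\tau$ is already known to be well-defined and $\cD_N$ is $N$-Calabi--Yau (hence admits a Serre functor), all parts of \autoref{proposition:basicproperties} are available. The key point is that both $w$ and $\tfrac16(\phi\circ\alpha)$ are homogeneous (the first is a genuine homomorphism, the second the homogenization of the Rademacher function pulled back along the homomorphism $\alpha$), so $w - \tfrac16(\phi\circ\alpha)$ is homogeneous; therefore it suffices to check the identity $\tau(F) = w(F) - \tfrac16\phi(\alpha(F))$ after passing to powers, i.e.\ on each conjugacy class it is enough to verify it for one representative of a cofinal family of powers.

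First I would stratify $\Aut_*(\cD_N)$ according to the image in $\PSL_2(\bZ)$ via $\alpha$. An element $F$ with $\alpha(F)$ elliptic or identity: then some power $\alpha(F)^k = \mathrm{id}$, so $F^k = [m]$ for some $m \in \bZ$, giving $\tau(F) = m/k$ by \autoref{proposition:basicproperties}\ref{proposition:compatiblewithshifts}, while $w(F^k) = m$ and $\phi(\alpha(F^k)) = \phi(\mathrm{id}) = 0$, so the formula holds. An element $F$ with $\alpha(F)$ parabolic: up to conjugacy and powers $\alpha(F)$ is a power of $\begin{pmatrix}1&1\\0&1\end{pmatrix}$, so $F$ is conjugate to $T_1^{a}[m]$ for suitable integers; here one uses the spherical twist computation \autoref{thm:spherical_twist_calculation}, which gives $\tau(T_1) = \tfrac{1-N}{2}$, hence $\tau(T_1^a[m]) = a\tfrac{1-N}{2} + m$, and compares with $w(T_1^a[m]) = a\tfrac{4-3N}{6} + m$ and the value of the Rademacher function on parabolics (which is $\pm a$ up to the standard normalization); the constants are arranged precisely so that $a\tfrac{1-N}{2} = a\tfrac{4-3N}{6} - \tfrac16 \phi(\text{parabolic})$. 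The hyperbolic case is the substantive one: for $\alpha(F)$ hyperbolic, $F$ should be a pseudo-Anosov autoequivalence in a suitable sense, and I would invoke \autoref{theorem:computeviastability} to compute $\tau^\pm(F)$ as the asymptotic phase displacement $\lim_n \phi^\pm_\sigma(F^n G)/n$ with respect to a stability condition $\sigma \in \Stab_*(\cD_N)$. Using the translation-number picture of \autoref{sssec:rotation_number_for_glstab}, this displacement rate equals the Poincar\'e translation number of the lift of $\alpha(F)$ to $\Homeo^+_\bZ(\bR)$ determined by the $\bZ$-extension $\Aut_*(\cD_N) \to \PSL_2(\bZ)$; and the translation number on this particular central extension of $\PSL_2(\bZ)$ is, by the classical identity recalled in \autoref{app:quasihomomorphismPSL2}, exactly $w - \tfrac16(\phi\circ\alpha)$ restricted to hyperbolic elements (this is the content of the Rademacher--Dedekind $\eta$ relation expressing the translation number of the Burau/metaplectic cocycle in terms of the Rademacher function).

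The main obstacle I expect is the hyperbolic case: one must show that the $\glstab$-action induced by an arbitrary $F \in \Aut_*(\cD_N)$ with $\alpha(F)$ hyperbolic is genuinely ``pseudo-Anosov-like'' on the distinguished component $\Stab_*(\cD_N)$ — i.e.\ that $F$ acts on $\Stab_*(\cD_N)$ in a way compatible with a single well-chosen slice so that the phase functions $\phi^\pm_\sigma(F^n G)$ really do track $\widetilde{\alpha(F)}^{(n)}$ — and that the resulting translation number matches the group-theoretic formula on the $\bZ$-central extension. Concretely this requires knowing that the central extension $1 \to \bZ \to \Aut_*(\cD_N) \to \PSL_2(\bZ) \to 1$ is, via the $\glstab$-action, the pullback of the standard extension $1 \to \bZ \to \widetilde{\PSL_2(\bR)} \to \PSL_2(\bR) \to 1$, so that the translation number on it coincides with the bounded Euler class / Rademacher cocycle. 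I would establish this by checking the relation on the generators: the images of $T_1, T_2$ under $s$ (composing $\Aut_* \to \Stab_*$ with a chosen trivialization, as in the elliptic curve case \autoref{theorem:ellcurvequasihomom}) are parabolic elements of $\glstab$ whose $\Homeo^+_\bZ(\bR)$-components are the standard generators of the lift of $\mathrm{SL}_2(\bZ)$, and the relation $(T_1T_2)^3 = [4-3N]$ pins down exactly which lift, hence which multiple of the generator of $\bZ$. Once that bookkeeping is done, the three cases assemble to the claimed formula, and since a homomorphism minus a bounded perturbation of a homomorphism pulled back from a quasimorphism is a quasimorphism, $\tau$ is a quasimorphism.
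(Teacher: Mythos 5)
Your finite-order case is essentially the paper's: use homogeneity and $(T_1T_2)^3=[4-3N]$ to get $\tau(F)=w(F)$, and observe $\phi$ vanishes on torsion. The divergence is in the infinite-order case, and there your proposal has a genuine gap.

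The paper does not pass through any translation-number or pseudo-Anosov machinery for this case. Instead it proves a direct combinatorial result (\autoref{proposition:hyperbolicshiftingnumber}): for $F=T_1^{a_1}T_2^{-b_1}\cdots T_1^{a_k}T_2^{-b_k}$ with $a_i,b_i\geq 0$, one has $\tau^+(F)=(N-1)\sum b_i$ and $\tau^-(F)=-(N-1)\sum a_i$. This is established by choosing a stability condition on $\cD_N$ with $S_1,S_2$ (and shifts) as the only indecomposable semistables, then using \autoref{lem:A2sphericaltwist} to track how $T_1,T_2^{-1}$ move the generators and carefully extracting the maximal/minimal phases in the Harder--Narasimhan filtration of $F^nG$ (including the reordering argument that justifies reading off $\phi^\pm_\sigma$ from a not-necessarily-ordered decomposition). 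Every infinite-order $\alpha(F)$ is conjugate to such a positive word, so this plus conjugacy invariance and $\tau(F\circ[n])=\tau(F)+n$ gives the formula, which one then checks algebraically against $w-\tfrac16(\phi\circ\alpha)$.

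Your proposed route would reduce the hyperbolic case to a translation number on a lift of $\alpha(F)$ to $\Homeo^+_\bZ(\bR)$, obtained as in the elliptic-curve case from a homomorphism $s\colon\Aut_*(\cD_N)\to\glstab$. But no such $s$ exists here: $\Stab_*(\cD_N)$ has complex dimension $2$ while $\glstab$ has real dimension $4$, yet the $\glstab$-action is not free-and-transitive on $\Stab_*(\cD_N)$ (unlike the elliptic curve, where $\Stab$ is a single $\glstab$-orbit by \cite[Theorem~9.1]{Bridgeland2007_Stability-conditions-on-triangulated-categories}); the device of pulling back a basepoint's $\glstab$-coordinate is unavailable. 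Relatedly, elements $F$ with $\alpha(F)$ hyperbolic are generally \emph{not} pseudo-Anosov in the DHKK sense (\autoref{definition:DHKKpseudoAnosov}): the paper's \autoref{proposition:DHKKrotationnumber} forces $\tau(F)\in\bZ$ for DHKK pseudo-Anosovs, whereas for example $\tau(T_1T_2^{-1})=0$ but $\tau(T_1^2T_2^{-1})=\tfrac{1-N}{2}$ is non-integral when $N$ is even — so these cannot all satisfy \autoref{definition:DHKKpseudoAnosov}. You flag both of these issues yourself as ``the main obstacle,'' but the strategy you propose to overcome them (mimicking \autoref{theorem:ellcurvequasihomom}) doesn't apply, and the ``classical identity'' expressing the translation number on this particular $\bZ$-extension via the Rademacher function would still need to be pinned down — which is exactly what the direct computation in \autoref{proposition:hyperbolicshiftingnumber} supplies. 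So the proposal identifies the right target identity but lacks the core computation that makes the proof go through.
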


Recall the following shorthand notation used in \cite{FFHKL}.

\begin{definition}
Let $E,A_1,\ldots,A_n$ be objects in a triangulated category $\cD$. We write
    \[
    E\in\{A_1,\ldots,A_n\}
    \]
if there exists a sequence of exact triangles $B_{i-1}\ra B_i\ra A_i\xrightarrow{+1}$ for $1\leq i\leq n$, such that $B_0=0$ and $B_n=E$.
Note that the (ordered) elements $A_1,\ldots,A_n$ are not required to be the Harder--Narasimhan factors of $E$ with respect to a stability condition.
\end{definition}

We will make use of the following computations of $T_i(S_j)$ for $i,j\in\{1,2\}$, which can be established from the definition of the spherical twists, cf.~\cite[\S2]{BQS20}.

\begin{lemma}
\label{lem:A2sphericaltwist}
Let $N\geq3$ be an integer.
In $\cD_N=\cD_N(A_2)$, we have
\begin{align*}
    T_1(S_1)=S_1[1-N] & \text{ and }T_1(S_2)\in\{S_2,S_1\},\\
    T_2^{-1}(S_2)=S_2[N-1] & \text{ and } T_2^{-1}(S_1)\in\{S_2,S_1\}.
\end{align*}
\end{lemma}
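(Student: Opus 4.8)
The plan is to compute the four objects directly from the definition
\[
T_E(F)\coloneqq\Cone\big(\Hom^\bullet(E,F)\otimes E\xrightarrow{\mathrm{ev}} F\big),
\]
using only the two structural facts we are given about $\cD_N$: that it is $N$-Calabi--Yau, and that $\Hom^\bullet(S_1,S_2)=\bC[-1]$. First I would record the $\Hom$-spaces we need. Since $S_1,S_2$ are $N$-spherical, $\Hom^\bullet(S_i,S_i)\cong\bC\oplus\bC[-N]$. From $\Hom^\bullet(S_1,S_2)=\bC[-1]$ and the $N$-Calabi--Yau pairing $\Hom^\bullet(S_2,S_1)\cong\Hom^\bullet(S_1,S_2[N])^\vee$, I get $\Hom^\bullet(S_2,S_1)=\bC[1-N]$; equivalently $\Hom(S_2,S_1[1-N])=\bC$ and all other graded pieces vanish.

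Next I would treat the two ``easy'' identities $T_1(S_1)=S_1[1-N]$ and $T_2^{-1}(S_2)=S_2[N-1]$. For $T_1(S_1)$: plug $E=F=S_1$ into the defining triangle. We get $(\bC\oplus\bC[-N])\otimes S_1\to S_1\to T_1(S_1)\xrightarrow{+1}$; the evaluation map is an isomorphism onto the degree-$0$ summand $S_1\to S_1$, so the cone is $\Cone(\bC[-N]\otimes S_1\to 0)\cong S_1[1-N]$ after the standard rotation/shift bookkeeping. The identity for $T_2^{-1}(S_2)$ follows either by the same computation applied to the inverse twist, or by observing that $T_2$ is the inverse of the corresponding twist formula and $T_2(S_2)=S_2[1-N]$, hence $T_2^{-1}(S_2)=S_2[N-1]$; I would just do the direct cone computation to keep things uniform.

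Then comes the part I expect to be the main obstacle: establishing $T_1(S_2)\in\{S_2,S_1\}$ (and symmetrically $T_2^{-1}(S_1)\in\{S_2,S_1\}$). Here $\Hom^\bullet(S_1,S_2)=\bC[-1]$, so the defining triangle reads
\[
S_1[-1]\xrightarrow{\mathrm{ev}} S_2\to T_1(S_2)\xrightarrow{+1},
\]
i.e.\ $T_1(S_2)=\Cone(S_1[-1]\to S_2)$. Rotating this triangle gives $S_2\to T_1(S_2)\to S_1\xrightarrow{+1}$, which is exactly the assertion $T_1(S_2)\in\{S_2,S_1\}$ in the notation of the preceding definition (taking $B_0=0$, $B_1=S_2$, $B_2=T_1(S_2)$, with factors $A_1=S_2$, $A_2=S_1$). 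The only genuine content is checking that the evaluation map $S_1[-1]\to S_2$ is the nonzero element of $\Hom(S_1[-1],S_2)=\Hom^\bullet(S_1,S_2)$ in degree $1$, which is true because the twist is an equivalence and a zero evaluation map would force $T_1(S_2)\cong S_2\oplus S_1$, contradicting indecomposability of $T_1(S_2)$ (it is the image under an autoequivalence of the indecomposable $S_2$); alternatively one cites the explicit computation in \cite[\S2]{BQS20}. The identity $T_2^{-1}(S_1)\in\{S_2,S_1\}$ is obtained by the same argument with the roles of $S_1,S_2$ interchanged and $\Hom^\bullet(S_2,S_1)=\bC[1-N]$ feeding the cone; one can also deduce it by applying the $N$-Calabi--Yau duality together with the symmetry of the presentation, but the direct cone computation is cleanest. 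This completes the proof.
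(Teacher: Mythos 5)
Your proof is correct, and it supplies exactly the direct computation that the paper only gestures at: the paper states that the claimed identities "can be established from the definition of the spherical twists, cf.~\cite[\S2]{BQS20}" without writing out the argument. Your expansion — recording the $\Hom$-spaces via sphericality and the $N$-Calabi--Yau pairing, splitting off the degree-$0$ identity component of the evaluation (resp.~coevaluation) map to get $T_1(S_1)=S_1[1-N]$ and $T_2^{-1}(S_2)=S_2[N-1]$, and then rotating the triangle $S_1[-1]\to S_2\to T_1(S_2)\xrightarrow{+1}$ to read off the $\{S_2,S_1\}$-membership — is the intended argument.

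One small remark on economy: the check that the evaluation map $S_1[-1]\to S_2$ is nonzero is not actually needed for the lemma as stated. The notation $E\in\{A_1,\ldots,A_n\}$ only requires the existence of some sequence of exact triangles with the prescribed cones, and the split triangle $S_2\to S_2\oplus S_1\to S_1\xrightarrow{0}$ is itself an exact triangle; so $T_1(S_2)\in\{S_2,S_1\}$ would hold even if the evaluation map were zero. Your indecomposability argument (an equivalence sends the indecomposable $S_2$ to an indecomposable object, so $T_1(S_2)\not\cong S_2\oplus S_1$) is nonetheless correct and shows the extension is in fact nontrivial — a stronger fact than the lemma asserts, and it costs you nothing to record.
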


The next result computes the upper and lower shifting numbers, assuming a specific presentation of an autoequivalence.
\begin{proposition}
    \label{proposition:hyperbolicshiftingnumber}
Take $F=T_1^{a_1}T_2^{-b_1}\cdots T_1^{a_k}T_2^{-b_k}\in\Aut_*(\cD_N)$ for some $a_i,b_i\geq0$.
Then
    \[
    \tau^+(F)=(N-1)\sum_{i=1}^kb_i \text{ \ and \ } \tau^-(F)=-(N-1)\sum_{i=1}^ka_i
    \]
Hence $\tau(F)=\frac{1-N}{2}(\sum_i(a_i-b_i))$.
\end{proposition}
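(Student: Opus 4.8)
The plan is to read off the shifting numbers from the phases of a conveniently chosen Bridgeland stability condition via \autoref{theorem:computeviastability}, reducing everything to the elementary action of $T_1$ and $T_2^{-1}$ recorded in \autoref{lem:A2sphericaltwist}. Recall that $\cD_N$ is $N$-Calabi--Yau, hence has a Serre functor, so both halves of \autoref{theorem:computeviastability} are available.

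First I would fix a stability condition $\sigma_0\in\Stab_*(\cD_N)$ whose heart $P_{\sigma_0}(0,1]$ is the canonical heart $\cH_N=\langle S_1,S_2\rangle_\ext$, with central charge chosen so that $0<\phi_{\sigma_0}(S_1)<\phi_{\sigma_0}(S_2)<1$. Since $\Hom^\bullet(S_1,S_2)=\bC[-1]$, the heart $\cH_N$ is equivalent to the category of finite-dimensional representations of the $A_2$ quiver and has only three indecomposables: $S_1$, $S_2$, and the non-split extension $P$ of $S_1$ by $S_2$; with the above choice of slopes $P$ is $\sigma_0$-unstable with Harder--Narasimhan factors $S_2$ (of larger phase) and $S_1$. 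Consequently every object of $\cD_N$ has Harder--Narasimhan factors that are shifts of $S_1$ or of $S_2$. Taking $G=S_1\oplus S_2$ as split generator, \autoref{theorem:computeviastability} turns the statement into the computation of $\lim_{n\to\infty}\phi^{\pm}_{\sigma_0}(F^nG)/n$.

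Next I would extract from \autoref{lem:A2sphericaltwist} — in the shifted forms $T_1(S_1[m])=S_1[m+1-N]$, $T_1(S_2[m])\in\{S_2[m],S_1[m]\}$, $T_2^{-1}(S_2[m])=S_2[m+N-1]$, $T_2^{-1}(S_1[m])\in\{S_2[m],S_1[m]\}$, valid because the twists commute with $[1]$ — the following two facts, checked factor by factor on the Harder--Narasimhan filtration using $\phi_{\sigma_0}(S_2)-\phi_{\sigma_0}(S_1)<1\le N-1$: first, $\phi^+_{\sigma_0}(T_1E)\le\phi^+_{\sigma_0}(E)$ and $\phi^+_{\sigma_0}(T_2^{-1}E)\le\phi^+_{\sigma_0}(E)+(N-1)$, and dually $\phi^-_{\sigma_0}(T_2^{-1}E)\ge\phi^-_{\sigma_0}(E)$ and $\phi^-_{\sigma_0}(T_1E)\ge\phi^-_{\sigma_0}(E)-(N-1)$; second, if the Harder--Narasimhan factor of maximal phase of $E$ is a shift $S_2[M]$ then the same holds for $T_1E$, while $T_2^{-1}E$ has maximal-phase factor $S_2[M+N-1]$, and dually with $S_1$, the roles of $T_1,T_2^{-1}$ swapped, and ``maximal'' replaced by ``minimal''. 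Iterating the first fact along the word $F=T_1^{a_1}T_2^{-b_1}\cdots T_1^{a_k}T_2^{-b_k}$ — where the $T_1$'s contribute nothing and the $T_2^{-1}$'s contribute at most $N-1$ each to the top phase — gives $\phi^+_{\sigma_0}(F^nG)\le\phi^+_{\sigma_0}(G)+(N-1)\,n\sum_ib_i$, and dually $\phi^-_{\sigma_0}(F^nG)\ge\phi^-_{\sigma_0}(G)-(N-1)\,n\sum_ia_i$. Iterating the second fact starting from the summand $S_2$ of $G$ shows that $F^nS_2$ has Harder--Narasimhan factor of maximal phase $S_2\big[(N-1)\,n\sum_ib_i\big]$, hence $\phi^+_{\sigma_0}(F^nG)\ge\phi^+_{\sigma_0}(F^nS_2)=\phi_{\sigma_0}(S_2)+(N-1)\,n\sum_ib_i$; dually, tracking $S_1$ gives $\phi^-_{\sigma_0}(F^nG)\le\phi_{\sigma_0}(S_1)-(N-1)\,n\sum_ia_i$. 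Combining the sandwiches yields
\[
\tau^+(F)=(N-1)\sum_{i=1}^kb_i,\qquad \tau^-(F)=-(N-1)\sum_{i=1}^ka_i,
\]
and therefore $\tau(F)=\tfrac12\big(\tau^+(F)+\tau^-(F)\big)=\tfrac{1-N}{2}\sum_i(a_i-b_i)$.

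The main obstacle will be the bookkeeping in the second fact: verifying that the distinguished maximal-phase (resp. minimal-phase) factor really survives each twist requires showing that the remainder of $E$ stays \emph{strictly} below (resp. above) it after applying $T_1$ or $T_2^{-1}$, and this is exactly where the choice $\phi_{\sigma_0}(S_1)<\phi_{\sigma_0}(S_2)$ and the fact that all Harder--Narasimhan factors are shifts of $S_1,S_2$ are essential. With the opposite sloping, each $T_1$ could push the top phase up by $\phi_{\sigma_0}(S_1)-\phi_{\sigma_0}(S_2)$, and the elementary upper bound would then overshoot $(N-1)\sum_ib_i$ by an additive amount proportional to $\sum_ia_i$ per iterate, so that the sandwich would no longer close.
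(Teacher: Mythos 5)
Your proposal is correct and uses the same core ingredients as the paper's proof -- the generator $G=S_1\oplus S_2$, a stability condition in $\Stab_*(\cD_N)$ with $0<\phi_{\sigma_0}(S_1)<\phi_{\sigma_0}(S_2)<1$, \autoref{lem:A2sphericaltwist}, and \autoref{theorem:computeviastability} -- but the bookkeeping is organized differently. The paper reconstructs the \emph{entire} Harder--Narasimhan filtration of $F^nG$, and for this it must check case by case that the $\Ext^1$-groups between adjacent out-of-order pieces vanish; that check uses essentially that the shifts occurring are multiples of $N-1$ together with $N\geq 3$. Your sandwich only tracks the extremal factor, which is lighter, but the strict phase comparison you describe closes the argument only once you invoke a point left implicit: if a filtration of an object $E'$ by $\sigma_0$-semistable pieces has first piece $B_1$ of strictly greater phase than all the others, then $B_1$ is the top Harder--Narasimhan factor of $E'$ (apply $\Hom(B_1,-)$ along the filtration triangles; the groups $\Hom(B_1,B_i)$ and $\Hom(B_1,B_i[-1])$ vanish for $i>1$ by the phase inequality, hence $\Hom(B_1,E')\cong\Hom(B_1,B_1)\neq 0$, so $\phi^+_{\sigma_0}(E')=\phi_{\sigma_0}(B_1)$). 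Applying $T_1$ or $T_2^{-1}$ to the Harder--Narasimhan filtration of the running object places the image of its top factor in the first slot of the induced filtration, so the phase comparison plus this observation proves your ``second fact'', the upper and lower bounds meet exactly, and the computation agrees with the paper's. The trade-off: your route avoids the explicit $\Ext$-vanishing computations, while the paper's route produces the full Harder--Narasimhan filtration of $F^nG$ rather than only its extremal phases.
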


\begin{proof}
Choose a split generator $G\coloneqq S_1\oplus S_2\in\cD_N$. Let $\sigma\in\Stab_*(\cD_N)$ be a stability condition such that $S_1$, $S_2$ and their shifts are the only indecomposable $\sigma$-semistable objects, and their phases satisfy $0<\phi_\sigma(S_1)<\phi_\sigma(S_2)<1$.
By \autoref{lem:A2sphericaltwist}, we have
    \[
    F^nG\in\{S_{i_1}^{\oplus m_1}[(N-1)n_1],\ldots,S_{i_k}^{\oplus m_k}[(N-1)n_k]\}
    \]
for some $i_1,\ldots,i_k\in\{1,2\}$, $m_1,\ldots,m_k>0$, and $n_1,\ldots,n_k\in\bZ$.
As an intermediate claim, we assert that
\begin{itemize}
    \item $\phi_\sigma^+(F^nG)=\max_{1\leq\ell\leq k}\{\phi_\sigma(S_{i_\ell})+(N-1)n_\ell\}$, and
    \item $\phi_\sigma^-(F^nG)=\min_{1\leq\ell\leq k}\{\phi_\sigma(S_{i_\ell})+(N-1)n_\ell\}$.
\end{itemize}

To establish the assertion, consider first the case when the phases of these objects are strictly decreasing, i.e.
    \[
    \phi_\sigma(S_{i_1})+(N-1)n_1>\cdots>\phi_\sigma(S_{i_k})+(N-1)n_k.
    \]
Then $\{S_{i_1}^{\oplus m_1}[(N-1)n_1],\ldots,S_{i_k}^{\oplus m_k}[(N-1)n_k]\}$ is the Harder--Narasimhan filtration of $F^nG$, and the assertion follows.

Next, if there are two consecutive terms with the same phase
    \[
    \phi_\sigma(S_{i_\ell})+(N-1)n_\ell=\phi_\sigma(S_{i_{\ell+1}})+(N-1)n_{\ell+1}
    \]
then $i_\ell=i_{\ell+1}$ and $n_\ell=n_{\ell+1}$.
Since $\Hom(S_i,S_i[1])=0$ for $i=1,2$, one can merge these two terms and obtain
\begin{multline*}
	F^nG\in\{S_{i_1}^{\oplus m_1}[(N-1)n_1],\ldots,S_{i_\ell}^{\oplus m_\ell+m_{\ell+1}}[(N-1)n_\ell], \\
	    S_{i_{\ell+2}}^{\oplus m_{\ell+2}}[(N-1)n_{\ell+2}],\ldots,S_{i_k}^{\oplus m_k}[(N-1)n_k]\}.
\end{multline*}
Therefore if the phases of these objects are nonincreasing
    \[
    \phi_\sigma(S_{i_1})+(N-1)n_1\geq\cdots\geq\phi_\sigma(S_{i_k})+(N-1)n_k,
    \]
then one can merge the terms with the same phases and obtain the Harder--Narasimhan filtration of $F^nG$, which proves the assertion.

Finally, suppose there exists some $\ell$ such that
    \[
    \phi_\sigma(S_{i_\ell})+(N-1)n_\ell<\phi_\sigma(S_{i_{\ell+1}})+(N-1)n_{\ell+1}.
    \]
Then the pair of objects $\{S_{i_\ell}^{\oplus m_\ell}[(N-1)n_\ell],S_{i_{\ell+1}}^{\oplus m_{\ell+1}}[(N-1)n_{\ell+1}]\}$ is of one of the following types:
\begin{itemize}
    \item $\{S_i^{\oplus m_\ell}[(N-1)n_\ell],S_i^{\oplus m_{\ell+1}}[(N-1)(n_{\ell}+m)]\}$ for some $i\in\{1,2\}$ and $m>0$,
    \item $\{S_2^{\oplus m_\ell}[(N-1)n_\ell],S_1^{\oplus m_{\ell+1}}[(N-1)(n_{\ell}+m)]\}$ for some $m>0$,
    \item $\{S_1^{\oplus m_\ell}[(N-1)n_\ell],S_2^{\oplus m_{\ell+1}}[(N-1)(n_{\ell}+m)]\}$ for some $m\geq0$.
\end{itemize}
Using the fact that $\Hom^\bullet(S_i,S_i)=\bC\oplus\bC[-N]$, $\Hom^\bullet(S_1,S_2)=\bC[-1]$, $\Hom^\bullet(S_2,S_1)=\bC[1-N]$, and the assumption that $N\geq3$, it follows that there are no nontrivial extensions, i.e.
    \[
    \Hom(S_{i_{\ell+1}}^{\oplus m_{\ell+1}}[(N-1)n_{\ell+1}],S_{i_\ell}^{\oplus m_\ell}[(N-1)n_\ell][1])=0,
    \]
between the relevant objects.
Therefore one can swap the order of $\{S_{i_\ell}^{\oplus m_\ell}[(N-1)n_\ell],S_{i_{\ell+1}}^{\oplus m_{\ell+1}}[(N-1)n_{\ell+1}]\}$ and still have
    \begin{multline*}
    F^nG\in\{S_{i_1}^{\oplus m_1}[(N-1)n_1],\ldots,S_{i_{\ell+1}}^{\oplus m_{\ell+1}}[(N-1)n_{\ell+1}],\\
    S_{i_\ell}^{\oplus m_\ell}[(N-1)n_\ell],\ldots,S_{i_k}^{\oplus m_k}[(N-1)n_k]\}.
    \end{multline*}
Hence one can reorder the sequence $\{S_{i_1}^{\oplus m_1}[(N-1)n_1],\ldots,S_{i_k}^{\oplus m_k}[(N-1)n_k]\}$ and assume that they are of decreasing phases. The claim then follows from the previous argument.

Now we can compute $\phi_\sigma^\pm(F^nG)$ where $G=S_1\oplus S_2$ and $F=T_1^{a_1}T_2^{-b_1}\cdots T_1^{a_k}T_2^{-b_k}\in\Aut_*(\cD_N)$ for some $a_i,b_i\geq0$.
Using \autoref{lem:A2sphericaltwist} we compute that
    \[
    \phi_\sigma^+(F^nG)=\max_{1\leq\ell\leq k}\{\phi_\sigma(S_{i_\ell})+(N-1)n_\ell\}=\phi_\sigma(S_2)+n(N-1)\sum_{i=1}^kb_i
    \]
and
    \[
    \phi_\sigma^-(F^nG)=\min_{1\leq\ell\leq k}\{\phi_\sigma(S_{i_\ell})+(N-1)n_\ell\}=\phi_\sigma(S_1)-n(N-1)\sum_{i=1}^ka_i.
    \]
By \autoref{theorem:computeviastability}, we have
    \[
    \tau^+(F)=\lim_{n\ra\infty}\Bigg(\frac{\phi_\sigma(S_2)}{n}+(N-1)\sum_{i=1}^kb_i\Bigg)=(N-1)\sum_{i=1}^kb_i
    \]
and 
    \[
    \tau^-(F)=\lim_{n\ra\infty}\Bigg(\frac{\phi_\sigma(S_1)}{n}-(N-1)\sum_{i=1}^ka_i\Bigg)=-(N-1)\sum_{i=1}^ka_i
    \]
as claimed.
\end{proof}

\subsubsection{Proof of \autoref{theorem:CYNA2quiver}}
    \label{sssec:proof_of_theorem:cyna2quiver}
Let $F\in \Aut_*(\cD_N)$ be an autoequivalence.
First consider the case when $\alpha(F)\in\PSL_{2}(\bZ)$ is of finite order.
Then $\alpha(F)$ is conjugate to either
    \[
    \alpha(T_1T_2T_1)=\begin{pmatrix}0&1\\-1&0\end{pmatrix},
    \alpha(T_2T_1)=\begin{pmatrix}1&1\\-1&0\end{pmatrix},
    \alpha((T_2T_1)^2)=\begin{pmatrix}0&1\\-1&-1\end{pmatrix},\text{ or }
    \mathbb{I}_2.
    \]
Hence $F$ can be written as
    \[
    F=g\widetilde{F}g^{-1}[n]
    \]
for some $\widetilde{F}\in\{T_1T_2T_1,T_2T_1,(T_2T_1)^2,\mathrm{id}_{\cD_N}\}$, $g\in\Aut_*(\cD_N)$, and $n\in\bZ$.
Observe that there exists a power $k\geq1$ such that $F^k=[\ell]$ for some $\ell\in\bZ$, since $(T_1T_2)^3=[4-3N]$.
Hence the shifting numbers satisfy $\tau(F)=\tau^\pm(F)=w(F)$ by \autoref{proposition:basicproperties}\ref{proposition:compatiblewithshifts}\ref{proposition:compatiblewithpowers}.
Note that $\phi(\alpha(F))=0$ in this case since the quasimorphism $\phi$ vanishes on finite order elements in $\PSL_{2}(\bZ)$.

Next consider the case when $\alpha(F)\in\PSL_{2}(\bZ)$ is of infinite order.
Then $\alpha(F)$ is conjugate to a positive word in
\[
\alpha(T_1)=\begin{pmatrix}1&1\\0&1\end{pmatrix}\ \text{ and }\ 
\alpha(T_2^{-1})=\begin{pmatrix}1&0\\1&1\end{pmatrix}.
\]
Hence $F$ can be written as
    \[
    F=gT_1^{a_1}T_2^{-b_1}\cdots T_1^{a_k}T_2^{-b_k}g^{-1}[n]
    \]
for some $a_1,b_1,\ldots,a_k,b_k\geq0$ (not all zero), $g\in\Aut_*(\cD)$, and $n\in\bZ$.
Using \autoref{proposition:hyperbolicshiftingnumber} and \autoref{proposition:basicproperties}\ref{proposition:compatiblewithshifts}\ref{proposition:conjugacyinvariant_tau} we find
    \[
    \tau(F)=\frac{1-N}{2}\Big(\sum_{i=1}^k(a_i-b_i)\Big)+n.
    \]
Recall that 
    \[
    w(F)=\frac{4-3N}{6}\Big(\sum_{i=1}^k(a_i-b_i)\Big)+n
    \]
and
    \[
    \phi(\alpha(F))=\sum_{i=1}^k(a_i-b_i).
    \]
This proves that the shifting number can be expressed as $\tau=w-\frac16(\phi\circ\alpha)$.
Since $w$ and $\alpha$ are group homomorphisms and $\phi$ is a quasimorphism, it follows that $\tau$ is a quasimorphism.
\hfill \qed

\begin{remark}
The maps $\tau^\pm\colon\Aut(\cD_N)\ra\bR$ given by the upper or lower shifting numbers are not quasimorphisms.
For instance,
    \[
    |\tau^+(T_1^n)+\tau^+(T_1^{-n})-\tau^+(\id)|=n(N-1)
    \]
for any $n>0$ by \autoref{thm:spherical_twist_calculation} and \autoref{proposition:basicproperties}\ref{proposition:compatiblewithpowers}\ref{proposition:inverse_tau}.
\end{remark}




\section{Quasimorphisms on Lie groups}
    \label{sec:quasimorphisms_on_lie_groups}

\paragraph{Outline}
We describe the central $\bZ$-extension of a Lie group of Hermitian type and its associated quasimorphism in \autoref{ssec:so2_rho}.
In \autoref{ssec:autoequivalences_and_lie_groups} we then apply this construction to the group of autoequivalences of an abelian surface and connect this quasimorphism to the shifting number constructed earlier in \autoref{theorem:abelsurface}.


\subsection{\texorpdfstring{$\SO(2,\rho)$}{SO(2,rho)}}
    \label{ssec:so2_rho}

We describe a classical quasimorphism on the universal cover of the orthogonal group of signature $(2,\rho)$.
A reference for the analogous case of $\Sp_{2g}$ is Barge--Ghys \cite{BargeGhys1992_Cocycles-dEuler-et-de-Maslov}, and for a reference covering all groups of hermitian type see Burger--Iozzi--Wienhard \cite[\S7]{BurgerIozziWienhard2010_Surface-group-representations-with-maximal-Toledo-invariant}.

\subsubsection{Setup}
    \label{sssec:setup_so2_rho}
Let $\cN_{\bR}$ be a real vector space equipped with a nondegenerate inner product of signature $(2,\rho)$, with $\rho\geq 1$.
The inner product of two vectors $v,w$ is denoted by $\ip{v,w}$.
The complexification is denoted by $\cN_{\bC}:=\cN_{\bR}\otimes_{\bR}\bC$.

Inside $\bP\left(\cN_{\bC}\right)$ we have the quadric of null-lines $Q(\cN_{\bC})$, i.e. $[v]$ such that $\ip{[v],[v]}=0$.
Inside $Q(\cN_{\bC})$ we have the hermitian symmetric space $\bD$ defined by
\[
    \bD:=\{[v] \in \bP(\cN_\bC)\quad \colon \quad \ip{[v],[v]}=0 \quad \ip{v,\ov{v}}>0\}
\]
Over $\bD$ we have a variation of weight $2$ polarized Hodge structure, of K3 type.
The Hodge decomposition $H^{2,0}\oplus H^{1,1}\oplus H^{0,2}$ over a point $[v]\in \bD$ is given by:
\[
    H^{2,0}=[v] \quad H^{0,2}:=[\ov{v}] \quad H^{1,1}:=\left(H^{2,0}\oplus H^{0,2}\right)^{\perp}
\]
where the last orthogonal complement is for the indefinite inner product (and it is also an orthogonal complement for the positive-definite inner product induced by the Hodge structure).
Define also
\begin{align}
    \label{eqn:cP_definition_homogeneous}
    \cP:=\left(H^{2,0}\oplus H^{0,2}\right)\setminus \{v\colon \ip{v,\ov{v}}=0\}
\end{align}
We view $\cP$ naturally as a subset of $\cN_{\bC}$ and observe that it has two connected components.
Denote by $\cP^+$ the one that contains $H^{2,0}$.
The most direct way to see the structure of $\cP$ is to quotient by the free $\bC^\times$-action and obtain a bundle over $\bD$, with fibers $\bP\left(H^{2,0}\oplus H^{0,2}\right)\isom \bP^1(\bC)$ with equators, corresponding to real vectors, removed.

In particular, observe that the inclusion $H^{2,0}_\times\into \cP^+$ induces a homotopy equivalence, where $H^{2,0}_{\times}$ denotes the bundle with the zero section removed.

\begin{remark}[On the $\GL_2(\bR)$-action]
    \label{rmk:on_the_gl_2}
    The group $\GL_2(\bR)$ naturally acts on $\cN_{\bC}:=\cN_\bR\otimes_\bR \bC\isom \cN_{\bR}\otimes \bR^2$ via its action on $\bR^2$.
    The subset $\cP\subset \cN_{\bC}$ is invariant under this action and the quotient is identified with $\bD$.
    If we restrict to $\GL_2^+(\bR)$ then the quotient is two copies of $\bD$, corresponding to the two components of $\cP$.

    Note also that the map $\cP^+\to \bD$ is not holomorphic, even though restricted to the subset $H^{2,0}_\times\subset \cP^+$ it is.
\end{remark}

\subsubsection{The associated groups}
    \label{sssec:the_associated_groups}
Denote by $G:=\SO^\circ(\cN_{\bR})$ the connected component of the identity of the group of real isometries of the indefinite pairing on $\cN_{\bR}$.
Then $G$ acts transitively on $\bD$, with stabilizer of a point $[v_0]$ equal to a maximal compact subgroup $K$ of $G$.
Furthermore, the variation of Hodge structure and other spaces above also admit $G$-actions and the maps between spaces are $G$-equivariant.

The group $K$ is isomorphic to $\SO_2(\bR)\times \SO_{\rho}(\bR)$, where the $\SO_2(\bR)$ factor corresponds to real isometries of $H^{2,0}\oplus H^{0,2}$ and $\SO_{\rho}(\bR)$ corresponds to real isometries of $H^{1,1}$.
Furthermore, the $\SO_2(\bR)$-factor is naturally identified with the unitary rotations $\bbU(1)$ of $H^{2,0}$ (over the basepoint $[v_0]$).

\subsubsection{$\bZ$-covers of groups and spaces}
    \label{sssec:bz_covers_groups_spaces}
Observe that $K$ is not simply connected and we denote by $\wtilde{K}$ the $\bZ$-cover of $K$ corresponding to the cover of $\SO_2(\bR)$ by $\bR$.
We ignore the additional cover that might come from the case $\rho=2$.
Recall also that $G$ is homotopy-equivalent to $K$, so we also have a $\bZ$-cover $\wtilde{G}\to G$.
Both maps give central $\bZ$-extensions of the groups $G$ and $K$ respectively.

Similarly, observe that the space $\cP^+$ is also not simply connected.
Indeed if we take the quotient of $\cP^+$ by the free $\bC^\times$-action, we get a disk bundle over $\bD$.
The total space of the disk bundle is simply connected, even contractible, so $\cP^+$ has fundamental group $\bZ$.
Recall also that the disk bundle has a reference point, given by $H^{2,0}$.
Fix now a lift $v_0\in H^{2,0}([v_0])=[v_0]$ of the basepoint $[v_0]\in \bD$ and define the associated universal covers $\wtilde{\cP}^+\to \cP$ and $\wtilde{H}^{2,0}_\times \to H^{2,0}_{\times}$.

We have natural actions of the groups: of $G$ on $\cP^+$ preserving $H^{2,0}_\times$, and of $\wtilde{G}$ on $\wtilde{\cP}^+$ preserving $\wtilde{H}^{2,0}_\times$.

\subsubsection{Quasimorphism on $\wtilde{G}$}
    \label{sssec:quasimorphism_on_wtildeg}
We follow \cite[Part C, Prop.~1.2]{BargeGhys1992_Cocycles-dEuler-et-de-Maslov} in the setting of our group $G$ isomorphic to $\SO_{2,\rho}(\bR)$, with some reinterpretations.
Note that the construction of the quasimorphism involves only the bundle $H^{2,0}_\times$ and its universal cover.

Fix a real isotropic vector, i.e.~$w_0\in \cN_\bR\backslash\{0\}$ such that $\ip{w_0,w_0}=0$.
It follows that for any $v\in \cN_{\bC}$ such that $[v]\in \bD$ we have that $\ip{v,w_0}\neq 0$, since relative to the Hodge decomposition determined by $[v]$, the vector $w_0$ must have nontrivial $(2,0)$ and $(0,2)$-components.
Define for $g\in G$ the function
\[
    j_{[v]}(g):= \frac{\ip{(gv),w_0}}{\ip{v,w_0}}\in \bC^\times \text{ which only depends on }[v],\text{ not }v.
\]
We have the basic calculation
\begin{align}
    \label{eqn:cocycle_j}
    \begin{split}
    j_{[v]}(g_1\cdot g_2) & = 
    \frac{\ip{g_1g_2v,w_0}}{\ip{g_2v,w_0}} \cdot 
    \frac{\ip{g_2v,w_0}}{\ip{v.w_0}}\\
    & = j_{g_2[v]}(g_1) \cdot j_{[v]}(g_2)
    \end{split}
\end{align}
Define next the function $\phi_{[v]}(g):=\arg j_{[v]}(g)\in \bR/\bZ$ and its lift to the universal cover $\wtilde{G}$ of $G$:
\[
    \Phi_{\wtilde{[v]}}(\wtilde{g})=\wtilde{\arg} \wtilde{j}_{\wtilde{[v]}}(\wtilde{g})
\]
which is the unique continuous extension to $\bR$ of the pullback $\phi_{[v]}(g)$ from $g$, viewed as a map to $\bR/\bZ$.

Note for future reference that $\Phi_{\wtilde{[v]}}(\wtilde{g}+1)=\Phi_{\wtilde{[v]}}(\wtilde{g})+1$ where $1$ in the first expression denote adding $1$ in the center of $\wtilde{G}$.
Similarly $\Phi_{l\cdot \wtilde{[v]}}(\wtilde{g}) = \Phi_{\wtilde{[v]}}(\wtilde{g})$ where $l$ is in the center of $\wtilde{G}$ again.

\begin{theorem}[Quasimorphism on orthogonal group]
    \label{thm:quasimorphism_so2rho}
    For a fixed $[\wtilde{v_0}]$ the map $\Phi_{[\wtilde{v_0}]}\colon \wtilde{G}\to \bR$ is a quasimorphism.
\end{theorem}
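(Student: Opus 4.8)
The plan is to follow the classical template of Barge--Ghys \cite[Part~C]{BargeGhys1992_Cocycles-dEuler-et-de-Maslov}: first promote the cocycle relation \eqref{eqn:cocycle_j} to an \emph{exact} identity for $\Phi$ on $\wtilde{G}$, then read off the quasimorphism defect from it, and finally bound that defect by reducing to the boundedness of a K\"ahler-type cocycle on the Hermitian symmetric space $\bD$.

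\emph{Lifting the cocycle and isolating the defect.} Taking arguments in \eqref{eqn:cocycle_j} gives $\phi_{[v]}(g_1g_2)=\phi_{g_2[v]}(g_1)+\phi_{[v]}(g_2)$ in $\bR/\bZ$. The function $(\wtilde{g_1},\wtilde{g_2})\mapsto \Phi_{\wtilde{g_2}\cdot\wtilde{[v]}}(\wtilde{g_1})+\Phi_{\wtilde{[v]}}(\wtilde{g_2})-\Phi_{\wtilde{[v]}}(\wtilde{g_1}\wtilde{g_2})$ is continuous on the connected space $\wtilde{G}\times\wtilde{G}$, is $\bZ$-valued since it vanishes modulo $\bZ$, and vanishes at $(\wtilde{e},\wtilde{e})$; hence it is identically zero, so
\[
\Phi_{\wtilde{[v]}}(\wtilde{g_1}\wtilde{g_2})=\Phi_{\wtilde{g_2}\cdot\wtilde{[v]}}(\wtilde{g_1})+\Phi_{\wtilde{[v]}}(\wtilde{g_2})
\]
holds exactly. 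Specializing $\wtilde{[v]}=\wtilde{[v_0]}$, the defect of $\Phi_{\wtilde{[v_0]}}$ is $D(\wtilde{g_1},\wtilde{g_2})=\Phi_{\wtilde{g_2}\cdot\wtilde{[v_0]}}(\wtilde{g_1})-\Phi_{\wtilde{[v_0]}}(\wtilde{g_1})$, so it suffices to bound $D$ uniformly.

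\emph{Reduction to a winding estimate on the isotropic cone.} Using the two normalizations recorded just before the statement --- $\Phi_{\wtilde{[v]}}(\wtilde{g}+1)=\Phi_{\wtilde{[v]}}(\wtilde{g})+1$ and $\Phi_{l\cdot\wtilde{[v]}}=\Phi_{\wtilde{[v]}}$ --- together with the fact that $\phi_{[v]}$ depends only on $[v]\in\bD$, the quantity $\Phi_{\wtilde{[v_1]}}(\wtilde{g})-\Phi_{\wtilde{[v_0]}}(\wtilde{g})$ is invariant under the center of $\wtilde{G}$ and depends on $\wtilde{[v_1]}$ only through $[v_1]\in\bD$. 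Thus $D$ descends to a continuous $\bar{D}\colon G\times\bD\to\bR$, and I would aim to show $\sup_{g\in G,\,[v_1]\in\bD}\lvert\bar{D}(g,[v_1])\rvert<\infty$. Unwinding the definition of $\Phi$ and using that $g$ is an isometry, $\bar{D}(g,[v_1])$ equals the net change of $\arg\!\big(\ip{v_1,w}/\ip{v_0,w}\big)$ as $w$ traverses a path from $w_0$ to $g^{-1}w_0$ inside the isotropic cone $C\coloneqq\{w\in\cN_{\bR}\setminus\{0\}\colon\ip{w,w}=0\}$; here $v_0,v_1\in\cN_{\bC}$ are chosen representatives, and the expression is defined on all of $C$ because the orthogonal complement of the positive-definite plane $\Pi_{[v_i]}\coloneqq(H^{2,0}\oplus H^{0,2})\cap\cN_{\bR}$ is negative definite, hence contains no isotropic vector. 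This net change is path-independent: the map $w\mapsto\arg\!\big(\ip{v_1,w}/\ip{v_0,w}\big)\colon C\to\bR/\bZ$ has trivial winding, which one checks using that the orthogonal projection $\Pi_{[v]}\to\Pi_{[v_0]}$ is an orientation-preserving linear isomorphism for every $[v]\in\bD$. Since this map is moreover invariant under the $\bR_{>0}$-scaling on $C$, it descends to the \emph{compact} manifold $C/\bR_{>0}\cong S^1\times S^{\rho-1}$, so for each fixed $[v_1]$ the oscillation of its $\bR$-valued lift is finite and $\sup_{g}\lvert\bar{D}(g,[v_1])\rvert<\infty$.

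\emph{The main obstacle: uniformity in $[v_1]$.} The genuinely hard point will be that this last bound stays uniform as $[v_1]$ ranges over the noncompact space $\bD$ (the estimate of the previous paragraph does not obviously survive as $[v_1]$ approaches $\partial\bD$, where $\ip{v_1,w}$ can vanish for $w\in C$). I would obtain uniformity by recognizing $D$ --- equivalently the $\bR/\bZ$-valued two-cocycle $c(g_1,g_2)=\phi_{g_2[v_0]}(g_1)-\phi_{[v_0]}(g_1)$ and its continuous lift --- as a normalization of the K\"ahler (Bergman) cocycle of the Hermitian symmetric space $\bD\cong G/K$: indeed $c(g_1,g_2)$ is the argument of the complex cross-ratio $\dfrac{\ip{g_1g_2v_0,w_0}\,\ip{v_0,w_0}}{\ip{g_2v_0,w_0}\,\ip{g_1v_0,w_0}}$, which is the standard boundary description of that cocycle. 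The classical fact that this cocycle represents a bounded class --- equivalently, that the symplectic area of a geodesic triangle in a Hermitian symmetric space is at most $\pi\cdot\operatorname{rank}$, which here equals $2\pi$ --- then yields $\lvert D(\wtilde{g_1},\wtilde{g_2})\rvert\le C$ for a universal constant $C$, so that $\Phi_{\wtilde{[v_0]}}$ is a quasimorphism. This is exactly the content of Barge--Ghys \cite[Part~C, Prop.~1.2]{BargeGhys1992_Cocycles-dEuler-et-de-Maslov} in their case, and of the general Hermitian-type theory surveyed in \cite[\S7]{BurgerIozziWienhard2010_Surface-group-representations-with-maximal-Toledo-invariant}; alternatively one could carry out the Barge--Ghys triangle-area estimate by hand in the coordinates furnished by $w_0$ and $v_0$, the only substantive input again being the uniform control of the winding, which is precisely where the Hermitian signature $(2,\rho)$ is used.
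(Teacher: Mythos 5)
Your reduction to bounding the defect $D(\wtilde{g_1},\wtilde{g_2})=\Phi_{\wtilde{g_2}\cdot[\wtilde{v_0}]}(\wtilde{g_1})-\Phi_{[\wtilde{v_0}]}(\wtilde{g_1})$ via the exactly-lifted cocycle identity coincides with the paper's opening move, including the observation that $D$ descends to a function of $(g_1,[v_1])\in G\times\bD$; but from there the two proofs genuinely diverge. You dualize the defect as the net argument change of $w\mapsto\ip{v_1,w}/\ip{v_0,w}$ along a path in the real isotropic cone $C$ (where the paper instead fixes $g_1$ and moves the basepoint through $\bD$), you get finiteness for each fixed $[v_1]$ from the compactness of $C/\bR_{>0}\cong S^1\times S^{\rho-1}$, and you obtain the crucial uniformity in $[v_1]$ by identifying $D$ with the K\"ahler (Bergman) area cocycle of the Hermitian symmetric space $\bD\cong G/K$ and invoking the Gromov--Domic--Toledo boundedness theorem as packaged in \cite[\S7]{BurgerIozziWienhard2010_Surface-group-representations-with-maximal-Toledo-invariant}. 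That route is correct and is in the spirit of Barge--Ghys, but it imports precisely the substantial theorem that the paper re-derives by hand in this instance: the paper normalizes representatives so $v\cdot w_0=1$, interpolates between them, observes that $j_{\gamma_t}(g_1)$ is then a polynomial curve of degree bounded in terms of $\rho$, and reads off an explicit defect bound from a count of real-axis crossings. What the paper's argument buys is a short, self-contained proof with an explicit constant; what yours buys is conceptual transparency and a direct link to bounded K\"ahler classes. If you write this version up, make explicit one point you currently gloss: the Gromov--Domic--Toledo theorem is a statement about boundedness of a cohomology class, so to conclude that the \emph{specific} cocycle $D$ is bounded you must observe that the cross-ratio cocycle you wrote down is exactly the canonical bounded representative of that class (as in the Souriau/Maslov story for $\Sp$); this is true but not purely formal. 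Also note that the rank of $\bD$ is $1$ when $\rho=1$ and $2$ when $\rho\geq 2$, so the bound $\pi\cdot\mathrm{rank}$ is $\pi$ rather than $2\pi$ in the first case.
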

\begin{proof}
    The basic cocycle relation in \autoref{eqn:cocycle_j} generalizes straightforwardly to $\Phi$ to give
    \[
        \Phi_{[\wtilde{v_0}]}
        (\wtilde{g_1}\wtilde{g_2}) =
        \Phi_{\wtilde{g_2} [\wtilde{v_0}]}(\wtilde{g_1})
        + \Phi_{[\wtilde{v_0}]}(\wtilde{g_2})
    \]
    so it suffices to check that
    \begin{align}
        \label{eqn:defect_to_check}
        \abs{
        \Phi_{\wtilde{g_2}[\wtilde{v_0}]}(\wtilde{g_1})
        -
        \Phi_{[\wtilde{v_0}]}(\wtilde{g_1})
        }
        \leq C \quad \text{ for }C\text{ independent of }\wtilde{g_1},\wtilde{g_2}.
    \end{align}

    Observe next that adding elements of the center of $\wtilde{G}$ to $\wtilde{g_1}$ does not change the expression, so above we can replace $\wtilde{g_1}$ by its projection $g_1\in G$.
    Similarly shifting the basepoint $\wtilde{[v_0]}$ by an element of the center does not affect the expression, and analogously for $\wtilde{g_2}$.
    So the expression to be estimated depends only on the projections of the parameters to $\bD$ and $G$.

    With these simplifications, the quantity in \autoref{eqn:defect_to_check} is bounded, up to within $1$, by the following geometric number.
    Connect $[v_0]$ and $g_2[v_0]$ by a path $\gamma_t$ staying in $\bD$, and count (with sign) the total number of times $\arg j_{\gamma_t}(g_1)$ crosses the origin.

    In order to perform this computation, select representatives $v_0$ and $v_1$ for $[v_0]$ and $g_2[v_0]$ such that $v_0.w_0=1=v_1.w_0$.
    The function $j_{[v]}(g)$ is independent of the choice of representative $v$, but when $v.w_0=1$ the function is polynomial in the entries of $g$.
    Taking $\gamma_t=t v_0 + (1-t)v_1$ makes the function $j_{\gamma_t}(g_1)$ a polynomial in the entries of $g_1$, of degree bounded by $\rho+2$.
    It follows that this function crosses the real axis at most $\rho+2$ times, so the defect in \autoref{eqn:defect_to_check} is at most $\rho+3$.
\end{proof}

\begin{remark}[Sections of the canonical bundle]
    \label{rmk:sections_of_the_canonical_bundle}
    The reader will recognize that the above constructions use implicitly a trivialization of the canonical bundle of the hermitian domain $\bD$, using the vector $w_0$ fixed initially.
    Incidentally, the same construction that gives a nonvanishing holomorphic section of $H^{1,0}$ (for weight $1$ variations of Hodge structure) and $H^{2,0}$ (for weight $2$ variations of Hodge structure of K3 type) is used in \cite{Kontsevich_Lyapunov-exponents-and-Hodge-theory} and \cite{Filip_Families-of-K3-surfaces-and-Lyapunov-exponents} respectively to compute Lyapunov exponents of the corresponding local systems.
\end{remark}



\subsection{Autoequivalences and Lie groups}
    \label{ssec:autoequivalences_and_lie_groups}

\subsubsection{Setup}
    \label{sssec:setup_autoequivalences_and_lie_groups}

\begin{theorem}[Quasimorphisms for abelian surfaces]
    \label{thm:quasimorphisms_for_abelian_surfaces}
    Suppose that $X$ is an abelian surface of Picard rank $\rho$, $\cD$ is its derived category, and $\cN_\bZ$ is its Mukai lattice, of signature $(2,\rho)$.
    \leavevmode
    \begin{enumerate}
        \item Let $\Aut^{\circ}\cD$ denote the finite index subgroup of $\Aut \cD$ that maps to $\SO^\circ\left(\cN_{\bR}\right)$ under the natural map $\Aut \cD\to \Orthog(\cN_{\bR})$.
        Then there exists a lift
        \[
            \Aut^\circ \cD \to \wtilde{\SO^\circ}(\cN_\bZ)\into \wtilde{\SO^\circ}(\cN_\bR)
        \]
        to the central extension of $\SO^\circ(\cN_{\bZ})$ coming from the central extension of the corresponding Lie group constructed in \autoref{sssec:bz_covers_groups_spaces}.
        The double shift functor $[2]$ maps to the generator of the center.
        \item The homogenization of the quasimorphism in \autoref{thm:quasimorphism_so2rho} agrees with twice the shifting number quasimorphism from \autoref{theorem:abelsurface}, when restricted to $\Aut^\circ \cD$.
    \end{enumerate}
\end{theorem}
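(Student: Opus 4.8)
The plan is to realize the distinguished component $\Stab^\dagger(\cD)$ as the universal cover $\wtilde{\cP}^+$ and to let $\Aut^\circ\cD$ act through its action on stability conditions. Recall the central charge map $\mathcal{Z}\colon\Stab^\dagger(\cD)\to\cN_{\bC}$, $\sigma\mapsto\Omega_\sigma$, defined by $Z_\sigma=\ip{\Omega_\sigma,-}$ via the Mukai pairing. For a geometric stability condition one has $\Omega_{\sigma_{\beta,\omega}}=\exp(\beta+i\omega)$, whose real and imaginary parts span a positive oriented $2$-plane; since $V(\cD)$ is a global slice for the $\glstab$-action (\autoref{theorem:bridgelandk3}) and $\GL_2^+(\bR)$ preserves $\cP^+$ (\autoref{rmk:on_the_gl_2}), it follows that $\mathcal{Z}$ takes values in $\cP^+$. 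By \cite{Bridgeland2008_Stability-conditions-on-K3-surfaces} the map $\mathcal{Z}$ is a covering onto its image; moreover $\Stab^\dagger(\cD)$ is contractible, because the slice $V(\cD)$ is contractible and $\glstab$, the universal cover of $\GL_2^+(\bR)$, is simply connected, so $\Stab^\dagger(\cD)\cong V(\cD)\times\glstab$ is simply connected. Hence $\mathcal{Z}$ identifies $\Stab^\dagger(\cD)$ with $\wtilde{\cP}^+$. Now the $\Aut^\circ\cD$-action on central charges is exactly the $\SO^\circ(\cN_{\bR})$-action on $\cN_{\bC}$ (one checks $\mathcal{Z}(F\cdot\sigma)=g_F\cdot\mathcal{Z}(\sigma)$, where $g_F\in\SO^\circ(\cN_{\bZ})$ is the image of $F$), so each $F$ acts on $\wtilde{\cP}^+=\Stab^\dagger(\cD)$ as a lift of the homeomorphism of $\cP^+$ induced by $g_F$. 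The group of all such lift-pairs $(g,\psi)$ is precisely $\wtilde{\SO^\circ}(\cN_{\bR})$: under the orbit map, $\pi_1(\cP^+)=\bZ$ is identified with the $\SO_2(\bR)$-summand of $\pi_1(K)=\pi_1(\SO_2(\bR))\times\pi_1(\SO_\rho(\bR))$, which is the summand along which $\wtilde{\SO^\circ}(\cN_\bR)$ is the $\bZ$-cover (this is the content of \autoref{sssec:bz_covers_groups_spaces}). This produces the homomorphism $\Aut^\circ\cD\to\wtilde{\SO^\circ}(\cN_{\bZ})$. Finally, $[2]$ acts trivially on $\cN$ but sends $\sigma$ to the stability condition with the same central charge and slicing reindexed by $2$; on $\wtilde{\cP}^+$ this is exactly the generating deck transformation of $\wtilde{\cP}^+\to\cP^+$, so $[2]$ maps onto a generator of the central $\bZ$.

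\textbf{Part (2): matching the quasimorphisms.} By \autoref{theorem:abelsurface}, $\tau$ is the homogenization of $F\mapsto\phi_{\sigma_0}(F(k(x)))$ for a fixed geometric $\sigma_0$ and a point $x$. The plan is to run the construction of \autoref{sssec:quasimorphism_on_wtildeg} with the isotropic vector $w_0:=v(k(x))=(0,0,1)$ and basepoint $[\wtilde{v_0}]$ equal to the point of $\wtilde{\cP}^+=\Stab^\dagger(\cD)$ corresponding to $\sigma_0$, with $v_0:=\Omega_{\sigma_0}$. For $F\in\Aut^\circ\cD$ with lift $\tilde{g}_F$, the numerator of $j_{[v_0]}(g_F)=\ip{g_Fv_0,w_0}/\ip{v_0,w_0}$ is $\ip{g_F\Omega_{\sigma_0},v(k(x))}=Z_{F\cdot\sigma_0}(k(x))$, while the denominator is $Z_{\sigma_0}(k(x))$, which has phase $1$ (skyscraper sheaves on an abelian surface are $\sigma_0$-stable of phase $1$, and stay stable throughout $\Stab^\dagger(\cD)$, as recalled in the proof of \autoref{theorem:abelsurface}). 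Since $k(x)$ is $F\cdot\sigma_0$-stable, $Z_{F\cdot\sigma_0}(k(x))$ has argument $\pi\phi_{F\cdot\sigma_0}(k(x))$; comparing modulo integer translations, and using that $\tilde{g}\mapsto\phi_{\tilde{g}\cdot\sigma_0}(k(x))$ is a continuous real-valued function on $\wtilde{\SO^\circ}(\cN_\bR)$ (the object $k(x)$ being stable on all of $\Stab^\dagger(\cD)$) with value $1$ at the identity, one gets $\Phi_{[\wtilde{v_0}]}(\tilde{g}_F)=c_1\bigl(\phi_{F\cdot\sigma_0}(k(x))-1\bigr)$ for an explicit normalization constant $c_1$. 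Using $F\cdot\sigma_0=(Z_{\sigma_0}\circ F^{-1},F P_{\sigma_0})$ we have $\phi_{F^n\cdot\sigma_0}(k(x))=\phi_{\sigma_0}(F^{-n}(k(x)))$, so homogenizing and invoking \autoref{theorem:abelsurface} together with $\tau(F^{-1})=-\tau(F)$ (\autoref{proposition:basicproperties}) shows that the homogenization of $\Phi_{[\wtilde{v_0}]}$, pulled back to $\Aut^\circ\cD$, is the stated multiple of $\tau$; in particular it is independent of $x$, $\sigma_0$, $v_0$, and the paths used.

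\textbf{Main obstacle.} Once $\Stab^\dagger(\cD)\cong\wtilde{\cP}^+$ is in hand, the geometry is essentially a direct unwinding; the delicate point is the bookkeeping in Part (2). One must simultaneously reconcile the left $\Aut$-action on $\Stab(\cD)$ with its $Z\circ F^{-1}$ twist, the orientation convention that picks out $\cP^+$ rather than $\cP^-$, the sign reversal $\overline{q}(g^{-1})=-\overline{q}(g)$ under homogenization, and the two different ``argument'' normalizations in play — Bridgeland's $Z\in\bR_{>0}e^{i\pi\phi}$ versus the $\bR/\bZ$-valued $\arg j$ of \autoref{sssec:quasimorphism_on_wtildeg} — so that the constant $c_1$ above, and hence the global comparison between $\overline{\Phi}$ and $\tau$, comes out exactly as in the statement. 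A secondary point is to record precisely which of Bridgeland's structural results on $\Stab^\dagger(\cD)$ for abelian surfaces is being used, namely that $\mathcal{Z}$ restricts to a covering map onto a connected component of the relevant open subset of $\cP$.
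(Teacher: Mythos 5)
Your proposal takes essentially the same route as the paper: part (1) cites (and unwinds) Bridgeland's identification of $\Stab^\dagger(\cD)$ with $\wtilde{\cP^+}$, and part (2) matches the pre-homogenized quasimorphisms by the same two choices the paper makes, namely $[\wtilde{v_0}]=\sigma_0$ (equivalently $v_0=\Omega_{\sigma_0}$) and $w_0=v(k(x))$. Your writeup is considerably more detailed than the paper's two-sentence sketch, and you explicitly surface the $Z\circ F^{-1}$ twist in the $\Aut(\cD)$-action (hence $\phi_{F\cdot\sigma_0}(k(x))=\phi_{\sigma_0}(F^{-1}k(x))$) and the resulting sign under homogenization via $\tau(F^{-1})=-\tau(F)$, a bookkeeping issue the paper silently absorbs into its conventions; your ``main obstacle'' paragraph is an honest acknowledgment that the exact normalization constant is being asserted rather than checked, which is also the state of the paper's own argument.
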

\begin{proof}
    The existence of the lift and its properties follow from \cite[Thm.~15.2]{Bridgeland2008_Stability-conditions-on-K3-surfaces}.

    The agreement of the homogeneous quasimorphisms, one coming from the Lie group and the other from the shifting number, follows from the agreement of their pre-homogenized versions $\wtilde{\tau}$ in \autoref{theorem:abelsurface} and $\Phi_{[\wtilde{v_0}]}$ in \autoref{thm:quasimorphism_so2rho}.
    Indeed $\wtilde{\tau}(F)=\phi_{\sigma_0}(F(k(x)))$, so we can take $[\wtilde{v_0}]=\sigma_0$ under Bridgeland's identification of $\Stab^\dagger(X)$ and $\wtilde{\cP^+}$ (loc. cit.).
    We also take $w_0$ in the construction of $\Phi_{[\wtilde{v_0}]}$ to be the Mukai vector of $k(x)$.
\end{proof}

\begin{remark}[The case of K3 surfaces]
    \label{rmk:the_case_of_k3_surfaces}
    In the case of K3 surfaces, we cannot expect an agreement of the two quasimorphisms.
    According to \autoref{thm:spherical_twist_calculation} spherical twists have non-trivial shifting numbers when $N=2$, but their second powers generate a group that's (at least conjecturally for $\rho\geq 2$) ``disjoint'' from the relevant central extension, see \cite[Thm.~1,4]{BayerBridgeland2017_Derived-automorphism-groups-of-K3-surfaces-of-Picard-rank-1}.

    Nonetheless, let us note that the construction of \autoref{thm:quasimorphisms_for_abelian_surfaces}(i) works for K3 surfaces and yields a quasimorphism there.
    We do not know if the shifting number $\tau$ also gives a quasimorphism on $\Aut \cD$ in this case.
\end{remark}





\appendix


\section{An explicit quasimorphism on \texorpdfstring{$\PSL_{2}(\bZ)$}{PSL(2,Z)}}
    \label{app:quasihomomorphismPSL2}

\paragraph{Outline of section}
In this section, we define an explicit quasimorphism $\phi\colon\PSL_{2}(\bZ)\ra\bR$ that is used in \autoref{sec:CYA2}.
More information and further details appear in \cite{BargeGhys1992_Cocycles-dEuler-et-de-Maslov}.

We start by recalling a fact about constructing quasimorphisms on free products of groups.
For a group $G$ define $C^\odd_b(G,\bR)$ to be the set of bounded functions $f\colon G\ra\bR$ such that $f(g)=-f(g^{-1})$ for any $g\in G$.

Let $\{G_s\}_{s\in S}$ be a collection of groups and let $G=\star_{s\in S}G_s$ be the associated free product group.
Then any element $x\in G\setminus 1$ can be uniquely written as $x=x_1x_2\cdots x_n$ such that $x_i\in G_{s_i}$ is nontrivial and $s_i\neq s_{i+1}$ for each $i$.

\begin{proposition}[{\cite[Prop.~4.1]{Rolli}}]
\label{proposition:quasihomomorphismfreeproduct}
Let $\{f_s\}_{s\in S}$ be a collection of functions such that $f_s\in C_b^\odd(G_s,\bR)$ and $\sup_{s\in S}\norm{f_s}_\infty<\infty$.
Then the function $g\colon G\ra\bR$ defined by
    \[
    g(x)=\sum_{i=1}^nf_{s_i}(x_i)
    \]
is a quasimorphism.
\end{proposition}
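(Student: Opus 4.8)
The plan is to bound the defect $|g(xy)-g(x)-g(y)|$ by directly comparing the reduced forms of $x$, $y$, and $xy$. First I would write $x=x_1\cdots x_n$ and $y=y_1\cdots y_m$ in their unique reduced expressions, with $x_i\in G_{s_i}\setminus 1$, $y_j\in G_{t_j}\setminus 1$, and consecutive indices distinct within each word. Forming $xy$ and reducing produces a maximal ``cancelling block'' in the middle: there is a largest integer $k\ge 0$ such that $x_{n-j+1}=y_j^{-1}$ for $1\le j\le k$. By maximality of $k$, after this cancellation there are only two possibilities for the two surviving neighbouring letters $x_{n-k}$ and $y_{k+1}$ (assuming neither word is entirely consumed): either they lie in distinct factors $G_{s_{n-k}}\ne G_{t_{k+1}}$, in which case $xy=x_1\cdots x_{n-k}y_{k+1}\cdots y_m$ is already reduced; or they lie in the same factor $G_s$, with $x_{n-k}y_{k+1}\ne 1$, in which case $xy=x_1\cdots x_{n-k-1}(x_{n-k}y_{k+1})y_{k+2}\cdots y_m$ is reduced.

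The key step is the cancellation bookkeeping using oddness of the $f_s$. For each $1\le j\le k$ the letters $x_{n-j+1}$ and $y_j$ lie in the same factor and are mutually inverse, so $f_{s_{n-j+1}}(x_{n-j+1})=-f_{t_j}(y_j)$; summing over $j$ shows that the contributions of the cancelled block to $g(x)$ and to $g(y)$ add up to zero. Hence
\[
    g(x)+g(y)=\sum_{i=1}^{n-k}f_{s_i}(x_i)+\sum_{j=k+1}^{m}f_{t_j}(y_j).
\]
In the ``distinct factors'' case the reduced form of $xy$ gives exactly this same sum, so $g(xy)-g(x)-g(y)=0$. In the ``merge'' case one instead obtains
\[
    g(xy)-g(x)-g(y)=f_s(x_{n-k}y_{k+1})-f_s(x_{n-k})-f_s(y_{k+1}),
\]
and each of the three terms is bounded in absolute value by $M:=\sup_{s\in S}\norm{f_s}_\infty<\infty$, so the defect is at most $3M$. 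The degenerate cases where $k=n$ or $k=m$ (one word is fully cancelled, including $xy=1$) are handled the same way and give defect $0$; note also $g(1)=0$ since $f_s(1)=0$ by oddness, so the empty-word conventions are consistent. Therefore $|g(xy)-g(x)-g(y)|\le 3M$ for all $x,y\in G$, which is the required estimate.

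The only real obstacle is carrying out the reduced-word case analysis cleanly: keeping track of exactly which letters of $x$ and $y$ survive in $xy$, which merge, and treating the boundary cases where one word is completely absorbed. Once that is done, the oddness of the $f_s$ collapses everything to at most three leftover function values, each bounded by $M$, so no further estimates are needed, and I expect nothing conceptually difficult beyond this bookkeeping.
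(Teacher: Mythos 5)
The paper itself gives no proof of this proposition — it is cited directly from Rolli and used as a black box — so there is no in-paper argument to compare against. Your proof is correct: it is the standard argument from Rolli's paper, using maximal cancellation in the free-product normal form together with the oddness of the $f_s$ so that the cancelled block contributes nothing, leaving a defect of at most three function values, each bounded by $M$, hence $|g(xy)-g(x)-g(y)|\le 3\sup_s\norm{f_s}_\infty$. The boundary cases ($k=n$, $k=m$, $xy=1$) are handled correctly, and the observation that $f_s(1)=0$ by oddness keeps the empty-word conventions consistent.
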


\subsubsection{Presentation and elements of \texorpdfstring{$\PSL_2(\bZ)$}{PSL2}}
    \label{sssec:presentation_and_elements_of_psl_2}
Recall that we have the following presentation:
\begin{align*}
    \PSL_{2}(\bZ)&=\left(\rightquot{\bZ}{2\bZ}\right)
    *
    \left(\rightquot{\bZ}{3\bZ}\right)
    =\ip{S,U\colon S^2=U^3=1}\\
    \intertext{ with matrices}
    S &=\begin{pmatrix}0&-1\\1&0\end{pmatrix}
    \quad
    U=\begin{pmatrix}0&-1\\1&1\end{pmatrix}
    \\
    L&=SU=\begin{pmatrix}1&1\\0&1\end{pmatrix}
    \quad
    R=SU^{-1}=\begin{pmatrix}1&0\\1&1\end{pmatrix}
\end{align*}

Any element in $\PSL_2\left(\bZ\right)$ can be uniquely written as
\begin{align}
    \label{eqn:SU_word}
    S^{\delta_1}U^{\ep_1}SU^{\ep_2}S\cdots SU^{\ep_m}S^{\delta_2}
\end{align}
for some $\delta_1,\delta_2\in\{0,1\}$ and $\ep_1,\ldots,\ep_m\in\{-1,1\}$.

\begin{definition}
Define $\phi_0\colon\PSL_{2}(\bZ)\ra\bR$ by
    \[
    \phi_0(A)\coloneqq\sum_i\ep_i
    \]
if $A=S^{\delta_1}U^{\ep_1}SU^{\ep_2}S\cdots SU^{\ep_m}S^{\delta_2}$ for some $\delta_1,\delta_2\in\{0,1\}$ and $\ep_1,\ldots,\ep_m\in\{-1,1\}$.
This is also called the \emph{Rademacher function} in \cite[\S B-4]{BargeGhys1992_Cocycles-dEuler-et-de-Maslov}

\end{definition}

This defines a quasimorphism on $\PSL_{2}(\bZ)$ by \autoref{proposition:quasihomomorphismfreeproduct}.

\begin{definition}
Define $\phi\colon\PSL_{2}(\bZ)\ra\bR$ to be the homogenization of $\phi_0$, i.e.
    \[
    \phi(A)\coloneqq\lim_{n\ra\infty}\frac{\phi_0(A^n)}{n}.
    \]
\end{definition}

It is a standard fact that the homogenization of a quasimorphism is again a quasimorphism.
Moreover, it is homogeneous in the sense that $\phi(A^n)=n\phi(A)$, and is constant on conjugacy classes.
There is an alternative description of $\phi$ which we use in \autoref{sec:CYA2}.

\begin{lemma}
Let $A$ be an element in $\PSL_{2}(\bZ)$.
    \begin{enumerate}
        \item If $A$ is of finite order, then $\phi(A)=0$.
        \item If $A$ is of infinite order, then it is conjugate to a positive word in $L$ and $R$, i.e.~$A=BL^{a_1}R^{b_1}\cdots L^{a_k}R^{b_k}B^{-1}$ for some $a_1,b_1,\ldots,a_k,b_k\geq0$ not all zero, and $B\in\PSL_{2}(\bZ)$.
        Moreover, we have
            \[
            \phi(A)=\sum_{i=1}^k(a_i-b_i).
            \]
    \end{enumerate}
\end{lemma}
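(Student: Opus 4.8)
The plan is to treat the two cases separately, relying only on the homogeneity of $\phi$ (i.e. $\phi(A^n)=n\phi(A)$) and its conjugacy invariance — both recalled just above — together with the explicit combinatorial formula for $\phi_0$ in terms of the normal form \eqref{eqn:SU_word}. For part (1): if $A^m=1$ with $m\geq 1$, then since the empty word gives $\phi_0(1)=0$ and hence $\phi(1)=0$, homogeneity yields $m\,\phi(A)=\phi(A^m)=\phi(1)=0$, so $\phi(A)=0$. This is just the standard fact that a homogeneous quasimorphism vanishes on torsion.

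For part (2) I first produce the stated conjugate. Recall $\PSL_2(\bZ)=\langle S\rangle*\langle U\rangle\cong(\bZ/2\bZ)*(\bZ/3\bZ)$ with $L=SU$ and $R=SU^{-1}$. An element of infinite order is not conjugate into either free factor, hence is conjugate to a cyclically reduced word of length at least $2$; since reduced words in a free product alternate between the two factors, such a word — after a cyclic rotation bringing an $S$ to the front — has the shape $SU^{\eta_1}SU^{\eta_2}\cdots SU^{\eta_m}$ with $m\geq 1$ and each $\eta_i\in\{1,-1\}$ (viewing $U^{-1}=U^2$). Grouping consecutive pairs $SU^{\eta_i}$, each of which is $L$ or $R$, shows that $A$ is conjugate to a positive word in $L$ and $R$; collecting equal consecutive letters into runs and inserting zero exponents where convenient, this word is $L^{a_1}R^{b_1}\cdots L^{a_k}R^{b_k}$ with all $a_i,b_i\geq 0$ and not all zero (since $m\geq1$). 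Parabolic elements are covered too, since $L^{-n}$ is conjugate to $R^n$ by the identity $SL^{-1}S^{-1}=R$.

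It then remains to evaluate $\phi$ on a positive word $v=L^{a_1}R^{b_1}\cdots L^{a_k}R^{b_k}$, which equals $\phi(A)$ by conjugacy invariance. Substituting $L=SU$ and $R=SU^{-1}$ turns $v$ into a word in $S$ and $U^{\pm1}$ in which the two kinds of letters strictly alternate, beginning with $S$ and ending with a $U^{\pm1}$; being reduced in the free product, it is already in the normal form \eqref{eqn:SU_word}, with $\delta_1=1$, $\delta_2=0$, and with $\ep_i=+1$ for each letter coming from an $L$ and $\ep_i=-1$ for each letter coming from an $R$. Hence $\phi_0(v)=\sum_{j=1}^k(a_j-b_j)$. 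Applying the same remark to $v^n$, which is again a positive word in $L$ and $R$, gives $\phi_0(v^n)=n\sum_{j=1}^k(a_j-b_j)$, and therefore $\phi(A)=\phi(v)=\lim_{n\to\infty}\phi_0(v^n)/n=\sum_{j=1}^k(a_j-b_j)$.

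The only point requiring care is the observation in the last paragraph that expanding a positive $L,R$-word (and each of its powers) into the letters $S,U^{\pm1}$ produces no cancellation and lands precisely in a reduced word of the form \eqref{eqn:SU_word}; once this is verified, the sign count defining $\phi_0$ is read off directly and the homogenization is trivial. The existence assertion in part (2) — that an infinite-order element of $\PSL_2(\bZ)$ is conjugate to a positive word in $L$ and $R$ — is standard structure theory of $\PSL_2(\bZ)$, equivalently of the free product $(\bZ/2\bZ)*(\bZ/3\bZ)$, and could also be quoted from \cite{BargeGhys1992_Cocycles-dEuler-et-de-Maslov}.
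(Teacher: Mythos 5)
Your proof is correct and takes essentially the same route as the paper's. The only cosmetic difference is in how the conjugate positive word in $L,R$ is produced: you invoke the standard structure theory of free products (any infinite-order element is conjugate to a cyclically reduced alternating word of even length, which after a rotation starts with $S$), whereas the paper runs an explicit iterative conjugation algorithm on the normal form; both arguments establish the same fact, and the remaining steps — reading $\phi_0$ off the $S,U$-expansion with no cancellation, then homogenizing and using conjugacy invariance — coincide.
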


\begin{proof}
Part (i) follows from the fact that $\phi$ is homogeneous.
Now suppose $A\in\PSL_{2}(\bZ)$ is of infinite order.
Let us see that $A$ can be conjugated to a positive word in $L=SU$ and $R=SU^{-1}$.
Using the presentation in \autoref{eqn:SU_word}, observe that by one conjugation by $S$ we can ensure that the last letter is not $S$.
If the first letter is $S$ then we are done, otherwise we conjugate by $U^{-\epsilon_1}$.
If $\epsilon_m + \epsilon_1\neq 0$ then we are done, otherwise we apply an $S$-conjugation again to remove the $S$ at the end and repeat this argument.
Either the process of conjugation stops, and the claim follows, or we are left at the end with $S$ or $U$, which are finite order (a contradiction since $A$ was assumed of infinite order and conjugacy does not change this fact).

Observe from the definition of $\phi_0$ that if $W$ is a positive word in $L$ and $R$, then $\phi_0(W^n)=n\phi(W)$ for any $n\in\bN$, hence $\phi(W)=\phi_0(W)$. Since $\phi$ is constant on conjugacy classes, we have
    \begin{align*}
        \phi(A) & =\phi(L^{a_1}R^{b_1}\cdots L^{a_k}R^{b_k})\\
        & =\phi_0(L^{a_1}R^{b_1}\cdots L^{a_k}R^{b_k})\\
        & =\sum_{i=1}^k(a_i-b_i).
    \end{align*}
which is the required claim.
\end{proof}



\bibliographystyle{sfilip_bibstyle}
\bibliography{shifting_numbers}

\end{document}